\newcommand{\Proj}{{\rm Proj}}
\newcommand{\Spec}{{\rm Spec}}
\newcommand{\nc}{\newcommand}
\nc{\bla}{\phantom{bbbbb}}
\newcommand{\Hom}{ \,{\rm Hom} \,}
\newcommand{\Sym}{ \,{\rm Sym} \,}
\newcommand{\beq}{\begin{equation}}
\newcommand{\eeq}{\end{equation}}
\newcommand{\barr}{\begin{array}}
\newcommand{\earr}{\end{array}}
\newcommand{\beqar}{\begin{eqnarray}}
\newcommand{\eeqar}{\end{eqnarray}}
\newtheorem{theorem}{Theorem}[section]
\newtheorem{corollary}[theorem]{Corollary}
\newtheorem{lemma}[theorem]{Lemma}
\newtheorem{prop}[theorem]{Proposition}
\newtheorem{definition}[theorem]{Definition}
\newtheorem{remark}[theorem]{Remark}
\newtheorem{exit}[theorem]{Example}
\newenvironment{rem}{\begin{remark}\rm}{\end{remark}}
\newenvironment{defn}{\begin{definition}\rm}{\end{definition}}
\newcommand{\CC}{{\mathbb C }}
\nc{\FF}{ {\mathbb F} }
\nc{\HH}{ {\mathbb H} }
\newcommand{\ZZ}{{\mathbb Z }}
\newcommand{\PP}{ {\mathbb P } }
\newcommand{\UU}{{\mathbb U }}
\newcommand{\GG}{{\mathbb G }}
\newcommand{\cale}{\mathcal{E}}
\newcommand{\caln}{\mathcal{N}}
\newcommand{\calo}{\mathcal{O}}
\newcommand{\env}{\! 
\mathbin{\text{\rotatebox[origin=c]{70}{\scalebox{1.2}{$\approx$}}}} \! \!}
\newcommand{\inenv}{ / \! \!
\mathbin{\text{\rotatebox[origin=c]{70}{\scalebox{1.2}{$\sim$}}}} \!  }     %{\dblslash \!_{\circ}}
\nc{\conv}{{\rm Conv}}
\nc{\umax}{{U_{\max}}}
\newcommand{\weight}{\omega}
\newcommand{\Lie}{\mathrm{Lie}}
\newcommand{\bu}{\mathbf{u}}
\newcommand{\zdis}{\mathfrak{p}}
\newcommand{\OO}{\mathcal{O}}
\newcommand{\ba}{\mathbf{\alpha}}
\newcommand{\bars}{\overline{s}}
\newcommand{\barss}{\overline{ss}}
\newcommand{\lieu}{{\mathfrak u}}
\newcommand{\hU}{\hat{U}}
\newcommand{\xg}{X/\!/G}
\newcommand{\xu}{X\env U}
\newcommand{\GL}{\mathrm{GL}}
\newcommand{\SL}{\mathrm{SL}}
\newcommand{\sym}{\mathrm{Sym}}
\nc{\lieq}{{\mathfrak q}}
\nc{\liez}{{\mathfrak z}}
\nc{\lieqs}{{\lieq}^*}
\nc{\lieg}{{\mathfrak g}}
\nc{\liegs}{{\lieg}^*}
\nc{\liep}{{\mathfrak p}}
\nc{\lieps}{{\liep}^*}
\def\a{\alpha}
\def\l{\lambda}
\def\x{\xi}
\DeclareMathOperator{\spec}{Spec}
\newcommand{\nocontentsline}[3]{}
\newcommand{\tocless}[2]{\bgroup\let\addcontentsline=\nocontentsline#1{#2}\egroup}
\newcommand{\act}{\curvearrowright}
\newcommand{\dblslash}{/\! \!/}
\newcommand{\ten}{\otimes}
\newcommand{\mc}{\mathcal}
\newcommand{\mb}{\mathbb}
\newcommand{\kk}{\CC}
\newcommand{\rms}{\mathrm{s}}
\title{Geometric invariant theory for graded unipotent groups and applications}
\author{Gergely B\'erczi, Brent Doran, Thomas Hawes, Frances Kirwan}
\thanks{Early work on this project was supported by the Engineering and Physical Sciences 
Research Council [grant numbers   GR/T016170/1,EP/G000174/1].  Brent Doran was partially supported by Swiss National Science Foundation Award 200021-138071 and the European Union’s Horizon 2020 research and innovation programme under the Marie Sklodowska-Curie grant agreement AGAGAP No 742052.}
\begin{document}

\begin{abstract}
Let $U$ be a graded unipotent group over the complex numbers, in the sense that it has an extension $\hat{U}$ by the multiplicative group such that the action of the multiplicative group by conjugation on the Lie algebra of $U$ has all its weights strictly positive. Given any action of $U$ on a projective variety $X$ extending to an action of $\hat{U}$ which is linear with respect to an ample line bundle on $X$, then provided that one is willing to replace the line bundle with a tensor power and to twist the linearisation of the action of $\hat{U}$ by a suitable (rational) character, and provided an additional condition is satisfied which is the analogue of the condition in classical GIT that there should be no strictly semistable points for the action, we show that the $\hat{U}$-invariants form a finitely generated graded algebra; moreover the natural morphism from the semistable subset of $X$ to the enveloping quotient is surjective and expresses the enveloping quotient as a geometric quotient of the semistable subset. Applying this result with $X$ replaced by its product with the projective line gives us a projective variety which is a geometric quotient by $\hat{U}$ of an invariant open subset of the product of $X$ with the affine line and contains as an open subset a geometric quotient of a U-invariant open subset of $X$ by the action of $U$. Furthermore these open subsets of $X$ and its product with the affine line can be described using criteria similar to the Hilbert-Mumford criteria in classical GIT.
\end{abstract}

\maketitle

Mumford's geometric invariant theory (GIT) allows us to construct and study
quotients of %complex\footnote{Mumford's GIT is valid much more generally
%than just over the field of complex numbers, but in this paper we will
%restrict ourselves to working over $\CC$.} 
algebraic varieties by 
linear actions of reductive groups \cite{GIT,New,Popov,PopVin,Ress}. 
When a complex reductive group $G$ acts   linearly (with respect to an ample line bundle $L$) on a complex projective variety  $X$, the associated GIT quotient
$X/\!/G$ is the projective variety $\Proj(\bigoplus_{k \geq 0} H^{0}(X,L^{\otimes k})^G)$ associated to the ring of invariants
 $\bigoplus_{k \geq 0} H^{0}(X,L^{\otimes k})^G$, which is a finitely generated
graded complex algebra. Geometrically the variety $X/\!/G$ can be described as the image
of a surjective morphism from an open subset $X^{ss}$ of $X$, consisting of the
semistable points for the action, or as $X^{ss}$ modulo the equivalence relation
$\sim$ such that if $x,y \in X^{ss}$ then $x \sim y$ if and only if the closures of the $G$-orbits of $x$ and $y$ meet in $X^{ss}$. The stable points for the action form a subset $X^s$ of $X^{ss}$ which has a geometric quotient $X^s/G$ which is an open subset of $X/\!/G$. Moreover the subsets $X^s$ and $X^{ss}$ can be described using the Hilbert--Mumford criteria for (semi)stability. The GIT quotient $X/\!/G$ and its open subset $X^s/G$ can also be described in terms of symplectic geometry
and a moment map \cite{K,Ness}.

In
suitable situations GIT can be generalised to allow us to construct
GIT-like quotients for actions of non-reductive groups
\cite{
bri09,bri11,bri15-2,brisu,
DK,dk15,F2,F1,GP1,GP2,KPEN,W}. However there is an immediate difficulty in extending GIT to linear actions of non-reductive groups, since now the ring of invariants is not necessarily finitely generated as a graded algebra, and when it is not finitely generated there is no associated projective variety. 

 Every affine algebraic group $H$ has a unipotent radical $U \unlhd
 H$ such that $R=H/U$ is reductive
(over $\CC$ we have a semi-direct product decomposition
$H \cong R \ltimes U$), and understanding GIT-theoretic questions about the action -- such as whether invariants are finitely generated -- often follows from  understanding the action of the unipotent group $U$. In some cases the $U$-invariants happen to be finitely generated. For example, if $U$ is
the unipotent radical of a parabolic subgroup $P$ of a complex reductive group
$G$ and an action of $U$  on a complex projective variety  $X$,
 which is linear with respect to an ample line bundle $L$,
extends to a linear action of $G$, then
the ring of invariants $\bigoplus_{k \geq 0} H^{0}(X,L^{\otimes k})^U$ is finitely generated \cite{Grosshans,implone}. In this case the
\lq enveloping quotient' $\xu$ (in the sense of \cite{DK} but using the notation of \cite{BDHK}) is the projective variety
$\Proj(\bigoplus_{k \geq 0} H^{0}(X,L^{\otimes k})^U)$ associated to the ring of invariants, and it contains as an open subset a geometric quotient $X^s/U$
where $X^s$ is a $U$-invariant open subset of $X$. However there is still no analogue for $\xu$ of the geometric
description of $\xg$ when $G$ is reductive as $X^{ss}$ modulo an equivalence relation, since the natural morphism from $X^{ss}$ to $\xu$ is not in general surjective, although there are alternative geometric descriptions \cite{implone}.

In this paper we consider a more general situation.
Instead of taking $U$ to be the unipotent radical of a parabolic subgroup
of a complex reductive group $G$ which acts linearly on $X$, we assume that $U$ is a unipotent
group over $\CC$ with an extension $\hat{U} = U \rtimes \CC^*$ by $\CC^*$
such that the action of $\CC^*$ by conjugation on the Lie algebra of $U$ has all its weights strictly positive; we call such a $U$ a graded unipotent group. (The unipotent radical of a parabolic subgroup
of a complex reductive group $G$ always has such an extension contained in the parabolic subgroup).
We are interested in linear actions of $U$ on projective varieties $X$ which extend to
 linear actions of $\hat{U}$. 
Given any action of $U$ on a projective variety $X$ extending to an action of $\hU$ which is linear with respect to an ample line bundle on $X$, then {\it provided} that we are willing to replace the line bundle with a tensor power and to twist the linearisation of the action of $\hU$ by a suitable (rational) character of $\hU$, and provided an additional condition is satisfied which is the analogue of the condition in classical GIT that there should be no strictly semistable points for the action (that is,  \lq semistability coincides with stability'), we find that the $\hU$-invariants form a finitely generated algebra; moreover the natural morphism $\phi: X^{ss,\hU} \to X\env \hU$ is surjective and indeed expresses $X\env \hU$ as a geometric quotient of $X^{ss,\hU}$, so that $\phi$  satisfies $\phi(x) = \phi(y)$ if and
only if the $\hU$-orbits of $x$ and $y$ coincide in $X^{ss,\hU}$. 
Applying this result with $X$ replaced by $X \times \PP^1$ gives us a projective variety $(X \times \PP^1) \env \hU$ which is a geometric quotient by $\hat{U}$ of a $\hat{U}$-invariant open subset of $X \times \CC$ and contains as an open subset a geometric quotient of a $U$-invariant open subset $X^{\hat{s},U}$ of $X$ by $U$. Furthermore the subsets $X^{s,\hU} = X^{ss,\hU}$ and $X^{\hat{s},U}$ of $X$ can be described using Hilbert--Mumford-like criteria.

This situation arises even for the Nagata counterexamples to Hilbert's 14th problem, which provide examples of linear actions of unipotent groups $U$ on  projective space such that the corresponding $U$-invariants are not finitely generated. In these cases the linear action extends to a linear action of an extension $\hat{U} = U \rtimes \CC^*$ by $\CC^*$
such that the action of $\CC^*$ by conjugation on the Lie algebra of $U$ has all its weights strictly positive. Thus when the condition that semistability coincides with stability is satisfied, we obtain open subsets $X^{s,\hU} = X^{ss,\hU}$ and $X^{\hat{s},U}$ of $X$, which are determined by analogues of the Hilbert--Mumford criteria, with geometric quotients $X^{s,\hU}/\hU$ and $X^{\hat{s},U}/U$, such that $X^{s,\hU}/\hU$ is projective and $X^{\hat{s},U}/U$ is quasi-projective with a projective completion in which the complement of  $X^{\hat{s},U}/U$ is $X^{s,\hU}/\hU$.

A related situation is studied in \cite{BK15}, where it is assumed that the linear action of the  graded unipotent group $U$ extends to a linear action of a general linear group $\GL(n)$. Here $U$ and $\hat{U}$ are embedded in $\GL(n)$ as subgroups \lq generated along the first row' in the
sense that there are integers $1 = \weight_1 < \weight_2 \leq \weight_3 \leq \cdots \leq \weight_n$
and polynomials $\zdis_{i,j}(\alpha_1,\ldots,\alpha_n)$ in $\alpha_1,\ldots,\alpha_n$
with complex coefficients for $1<i<j \leq n$ such that 
\begin{equation} \label{label1}
\hU=\left\{\left(\begin{array}{ccccc}\a_1 & \a_2 & \a_3 & \ldots & \a_n \\ 0 & \a_1^{\weight_2} & p_{2,3}(\ba) & \ldots & p_{2,n}(\ba) \\ 0 & 0 & \a_1^{\weight_3} & \ldots & p_{3,n}(\ba) \\ \cdot & \cdot & \cdot & \cdot &\cdot \\ 0 & 0 & 0 & 0 & \a_1^{\weight_n}  \end{array}\right)%=(p_{i,j}(\a_1,\ldots, \a_n))_{i,j=1}^n
: \ba =(\a_1,\ldots, \a_n) \in \CC^* \times \CC^{n-1} \right\}
\end{equation}
and $U$ is the unipotent radical of $\hU$, defined by $\alpha_1 = 1$.
%$
%U=\left\{\left(\begin{array}{ccccc} 1 & \a_2 & \a_3 & %\ldots & \a_n \\ 0 & 1 & p_{2,3}(1,\a_2 \ldots ,\a_n) %& \ldots & p_{2,n}(1,\a_2,\ldots, \a_n) \\ 0 & 0 & 1 %& \ldots & p_{3,n}(1,\a_2,\ldots, \b_n) \\ \cdot & %\cdot & \cdot & \cdot &\cdot \\ 0 & 0 & 0 & 0 & 1  %\end{array}\right) %=(p_{i,j}(1,\a_2,\ldots, %\a_n))_{i,j=1}^n
%:\a_2, \ldots, \a_n \in \CC\right\}.
%$$
%There is also a subgroup $\tilde{U} \cong U \rtimes \CC^*$ of the {\em special} linear group $\SL(n)$ given by
%$$\tilde{U} = \SL(n) \cap \hU \, Z(\GL(n))$$
%where $Z(\GL(n)) \cong \CC^*$ is the centre of $\GL(n)$ consisting of the scalar matrices. 
The main results of \cite{BK15} also involve the subgroup $\tilde{U}$ of $\SL(n)$ which is the intersection of $\SL(n)$ with the product $\hat{U} Z(\GL(n))$ of $\hat{H}$ with the central one-parameter subgroup $Z(\GL(n)) \cong \CC^*$ of $\GL(n)$. Like $\hU$, the subgroup $\tilde{U}$ of $\GL(n)$ is a semi-direct product
$\tilde{U} = U \rtimes \CC^*$
where $\CC^*$ acts on the Lie algebra of $U$ with all weights strictly positive.
When $\GL(n)$ acts linearly on a projective variety $X$ with respect to an ample line bundle $L$ on $X$, and the linearisation of the action of $\tilde{U}$ on $X$ is twisted by a suitable rational character $\chi$ (which is \lq well adapted' to the action in the sense of \cite{BK15}), then it is shown in \cite{BK15} Theorem 1.1  that the corresponding algebra of ${\tilde{U}}$-invariants is 
finitely generated, and the projective variety $X \env \tilde{U}$ associated to this algebra of invariants is a categorical quotient of an open subset $X^{ss,\tilde{U}}$ of $X$ by $\tilde{U}$ and contains as an open subset a geometric quotient of an open subset $X^{s,\tilde{U}}$ of $X$.  
Applying a similar argument after replacing $X$ with $X \times \PP^1$ provides a projective variety $(X \times \PP^1) \env \tilde{U}$ which is a categorical quotient by $\tilde{U}$ of a $\tilde{U}$-invariant open subset of $X \times \CC$ and contains as an open subset a geometric quotient of a $U$-invariant open subset $X^{\hat{s},U}$ of $X$ by $U$.

The results of this paper are more general than those of \cite{BK15} in that the linear action of the unipotent group $U$ is only required to extend to a linear action of $\hU$ rather than to a general linear group in which $U$ and $\hU$ are embedded in a very special way. On the other hand in \cite{BK15} the additional condition that \lq semistability coincides with stability' is not required. The removal of this additional condition is addressed in \cite{BDHK16}, using a partial desingularisation construction analogous to that of \cite{K2}.

Let $\chi: \hU \to \CC^*$ be a character of $\hat{U}$ with kernel containing $U$; we will identify such characters $\chi $ with integers so that the integer 1 corresponds to the character which fits into the exact sequence $U \to \hU \to \CC^*$. Suppose that $ \weight_{\min} = \weight_0 < \weight_{1} < 
\cdots < \weight_{h} = \weight_{\max} $ are the weights with which the one-parameter subgroup $\CC^* \leq \hU$ acts on the fibres of the tautological line bundle $\calo_{\PP((H^0(X,L)^*)}(-1)$ over points of the connected components of the fixed point set $\PP((H^0(X,L)^*)^{\CC^*}$ for the action of $\CC^*$ on $\PP((H^0(X,L)^*)$; when $L$ is very ample $X$ embeds in $\PP((H^0(X,L)^*)$ and the line bundle $L$ extends to the dual $\calo_{\PP((H^0(X,L)^*)}(1)$ of the tautological line bundle $\calo_{\PP((H^0(X,L)^*)}(-1)$.
We will assume that there exist at least two distinct such weights since otherwise the action of $U$ on $X$ is trivial.
Let $c$ be a positive integer such that 
$$  \weight_{\min} = \weight_{0} <
\frac{\chi}{c} < \weight_{1} ;$$
 we will call rational characters $\chi/c$  with this property {\it  adapted } to the linear action of $\hU$ on $L$, and we will call the linearisation adapted if $ \weight_{0} <0 < \weight_{1} $.
 The linearisation of the action of $\hat{U}$ on $X$ with respect to the ample line bundle $L^{\otimes c}$ can be twisted by the character $\chi$ so that the weights $\weight_j$ are replaced with $\weight_jc-\chi$;
let $L_\chi^{\otimes c}$ denote this twisted linearisation. 
Let $X^{s,\CC^*}_{\min+}$ denote the stable subset of $X$ for the linear action of $\CC^*$ with respect to the linearisation $L_\chi^{\otimes c}$; 
%by the theory of variation of (classical) GIT \cite{Dolg,Thaddeus}, if $L$ is very ample then  $X^{s,\CC^*}_{\min+}$ is the stable set for the action of $\CC^*$ with respect to any rational character $\chi/c$ such that $ \weight_{0} < \chi/c < \weight_{1}$.
then
$$X^{s,\CC^*}_{\min+} = X^0_{\min} \setminus Z_{\min}
$$
where $Z_{\min}$ is the union of the connected components of the fixed point set $X^{\CC^*}$ for the action of $\CC^*$ on $X$ given by
$$Z_{\min} = \{ x \in X^{\CC^*} \, | \, \CC^* \mbox{ acts on $L^*|_x$ with weight }  \weight_{\min} \} $$
and
$$ X^0_{\min} = \{ x \in X \, | \, \lim_{t \to 0} \, tx \in Z_{\min} \}
.$$
We set $$X^{s,\hU}_{\min+} = X \setminus \hU (X \setminus X^{s,\CC^*}_{\min+}) = \bigcap_{u \in U} u X^{s,\CC^*}_{\min+}$$ to be the complement of the $\hU$-sweep (or equivalently the $U$-sweep) of $X \setminus X^{s,\CC^*}_{\min+}$.

In order to state the main theorem of this paper it is necessary to strengthen the condition that the linearisation is adapted. We will say that a property holds for a linear action of $\hat{U}$ with respect to a linearisation twisted by a {\it well adapted} rational character if there exists $\epsilon >0$ such that if $\chi/c$ is any rational character of $\CC^*$ (lifted to $\hU$) with 
$$  \weight_{\min} <
\frac{\chi}{c} < \weight_{\min} + \epsilon$$
then the property holds for the induced linearisation on $L^{\otimes c}$ twisted by $\chi$.

We also require that the action of $U$ satisfies an additional condition to which we will refer as  \lq semistability coincides with stability'. More precisely,
 we require that whenever $U'$ is a subgroup of $U$ normalised by $\CC^*$ and $\xi$ belongs to the Lie algebra of $U$ but not the Lie algebra of $U'$, then the weight space with weight $- \weight_{\min}$ for the action of $\CC^*$ on $H^0(X,L)$ is contained in the image $\delta_\xi(H^0(X,L)^{U'})$ of $H^0(X,L)^{U'}$ under the % derivation $\delta_\xi$.
infinitesimal action  $\delta_\xi: H^0(X, L) \to H^0(X, L)$ of $\xi$ on $H^0(X,L)$.

\begin{rem} For motivation for the terminology \lq semistability coincides with stability' see Remark \ref{remsss}, and also \cite{BDHK16} where a slightly weaker interpretation of this terminology is used.
\end{rem}

\begin{theorem} \label{mainthm} 
Let  $\hat{U} = U \rtimes \CC^*$ be a semidirect product of the unipotent group $U$ by $\CC^*$,
where the conjugation action of $\CC^*$ on $U$ is such that all the weights
of the induced $\CC^*$-action on the Lie algebra of $U$ are strictly positive.
Suppose that $\hU$  acts linearly on a projective variety $X$ with respect to a very ample line bundle $L$, and that  this linear action  satisfies the condition that \lq semistability coincides with stability' as above. Suppose also that 
$\chi: \hU \to \hU/U \to \CC^*$ is a character of $\hat{U}$ and $c$ is a positive integer such that the rational character $\chi/c$ is  adapted to the linear action of $\hU$ on $X$.
Then \\
(i) $X^{s,\hU}_{\min+}$ is a $\hU$-invariant open subvariety of $X$ with a geometric quotient $X^{s,\hU}_{\min+}/\hat{U}$  by $\hat{U}$ which is a projective variety.

If moreover the rational character $\chi/c$ is well adapted, then \\ (ii) 
the algebra of invariants $\oplus_{m=0}^\infty H^0(X,L_{m\chi}^{\otimes cm})^{\hat{U}}$ is 
finitely generated, $X^{s,\hU}_{\min+}$ is the corresponding (semi)stable locus for the linear action of $\hU$ on $X$ and 
the enveloping quotient $$X\env \hat{U} \cong X^{s,\hU}_{\min+}/\hat{U}$$ is the projective variety associated to this algebra of invariants.  
\end{theorem}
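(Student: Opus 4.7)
The plan is to construct $X^{s,\hU}_{\min+}/\hU$ as a projective variety by combining classical one-parameter GIT for the $\CC^*$-action with an inductive use of the ``semistability coincides with stability'' hypothesis along the unipotent direction, and then, under the well-adapted assumption, to identify this with the enveloping quotient $X\env\hU$ via a finite generation argument. Since $\chi/c$ is adapted, none of the weights $\weight_j c - \chi$ of the twisted linearisation $L_\chi^{\otimes c}$ on the $\CC^*$-fixed components of $X$ vanish. Classical one-parameter GIT therefore identifies $X^{s,\CC^*}_{\min+}$ with both the stable and the semistable locus for the $\CC^*$-action and produces a projective geometric quotient $X^{s,\CC^*}_{\min+}/\CC^*$. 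The subset $X^{s,\hU}_{\min+} = \bigcap_{u \in U} u X^{s,\CC^*}_{\min+}$ is then open (the intersection is effectively finite because $U$ is a connected finite-dimensional algebraic group acting on a Noetherian scheme), $\hU$-invariant by construction, and retracts onto $Z_{\min} \cap X^{s,\hU}_{\min+}$ via the $\CC^*$-flow $x \mapsto \lim_{t \to 0} tx$.

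The crucial step is to use the ``semistability coincides with stability'' hypothesis to construct $\hU$-invariant sections of high tensor powers of $L_\chi^{\otimes c}$ which both cut out the complement of $X^{s,\hU}_{\min+}$ and separate $\hU$-orbits inside it. I would proceed by induction along a chain $\{1\} \subsetneq U_1 \subsetneq \cdots \subsetneq U_\ell = U$ of $\CC^*$-normalised subgroups compatible with the positive grading of $\Lie U$: at each stage, choosing $\xi \in \Lie U_{i+1} \setminus \Lie U_i$, the hypothesis applied with $U' = U_i$ says that the $\CC^*$-weight $-\weight_{\min}$ part of $H^0(X,L)$ lies in $\delta_\xi H^0(X,L)^{U_i}$, which lets one lift $U_i$-invariants to $U_{i+1}$-invariants of controlled $\CC^*$-weight. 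Iterating through the chain produces $U$-invariant sections and, after a suitable twist by $\chi$, genuinely $\hU$-invariant sections. These sections embed $X^{s,\hU}_{\min+}/\hU$ into a projective space; combined with the $\CC^*$-retraction to $Z_{\min}$ this both shows that the $\hU$-action on $X^{s,\hU}_{\min+}$ has closed, free orbits and realises the quotient as a projective variety, proving (i).

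For (ii), the well-adapted hypothesis provides the extra room between $\chi/c$ and $\weight_{\min}$ needed to keep all weight shifts produced by the operators $\delta_\xi$ within the adapted window, so that the inductive construction exhausts $\bigoplus_{m\geq 0} H^0(X, L_{m\chi}^{\otimes cm})^{\hU}$. This algebra then coincides with the homogeneous coordinate ring of the projective quotient produced in (i), is therefore finitely generated, and its $\Proj$ identifies $X \env \hU$ with $X^{s,\hU}_{\min+}/\hU$. The main obstacle will be this inductive lifting of infinitesimal data to global $\hU$-invariants: it requires careful tracking of how $\CC^*$-weights shift under $\delta_\xi$ along the chain of $\CC^*$-normalised subgroups, and the well-adapted condition is precisely what makes the induction terminate with sections whose zero loci are correct and that descend to an ample (rather than merely semi-ample) line bundle on the quotient.
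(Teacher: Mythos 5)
There is a genuine gap at the heart of your argument: you are using the ``semistability coincides with stability'' hypothesis for something it does not do. That hypothesis is a \emph{surjectivity} statement --- the weight space of weight $-\weight_{\min}$ in $H^0(X,L)$ lies in the image $\delta_\xi(H^0(X,L)^{U'})$ --- so what it hands you are \emph{preimages} under $\delta_\xi$ of top-weight vectors, not elements of the kernel of $\delta_\xi$. Producing a $U_{i+1}$-invariant from a $U_i$-invariant means producing a kernel element, and no mechanism for that is given; I do not believe the ``lifting'' step works as stated. In the paper the condition plays an entirely different role: for $\dim U=1$ and $X=\PP(V)$ it is equivalent to saying that $V_{\min}$ contains no $U$-fixed vectors (equivalently, points of $Z_{\min}$ have trivial $U$-stabiliser), and this is exactly the input needed in Lemma \ref{lem:ExAd1.2} to show, via Hilbert--Mumford computations, that stability equals semistability for an auxiliary \emph{reductive} action of $\SL(2)\times\CC^*$ on $\PP^2\times\PP(V)$ and that the boundary of $G\times_U\PP(V)$ is unstable. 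The surjectivity is used elsewhere only to check that the hypothesis is inherited by tensor powers, by the subgroups $\hU_{\dagger}$, and by the induced action on $\overline{X\inenv U_{\dagger}}$ (Lemma \ref{lemtensorpower}) --- i.e.\ to make the induction on $\dim U$ run, not to manufacture invariant sections.

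The second, related gap is that projectivity of the quotient in (i) and finite generation in (ii) are asserted rather than proved. Your sections are supposed to ``embed $X^{s,\hU}_{\min+}/\hU$ into projective space'' and the inductive construction is supposed to ``exhaust'' $\bigoplus_m H^0(X,L_{m\chi}^{\otimes cm})^{\hU}$, but completeness of the image and generation of the full invariant ring are precisely the hard points for non-reductive groups. The paper gets both by transporting the problem to reductive GIT: $U=\CC^+$ is embedded in $G=\SL(2)$ as a Grosshans subgroup, $\overline{G\times_U\PP(V)}=\PP^2\times\PP(V)$ is a reductive envelope whose boundary is unstable, Theorem \ref{thm:geomcor} then gives surjectivity of $\phi$ and finite generation, Proposition \ref{cor:GiFi5} upgrades completeness of the enveloping quotient to finite generation in the general step, and the case of general $(X,L)$ and general $U$ follows by closed immersion into $\PP(H^0(X,L)^*)$ and by induction on $\dim U$ using the quotients $\overline{X\inenv U_{\dagger}}$. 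Without some substitute for this reductive envelope (or for the localisation argument at minimal-weight sections sketched in Remark \ref{cor:invariants2}), your outline does not yield a proof.
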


\begin{rem}
Recall that Popov \cite{Popov} has shown that if $H$ is {\it any} non-reductive group then there is an affine variety $Y$ on which $H$ acts such that the algebra of invariants
$\calo(Y)^H$ is {\it not} finitely generated.
\end{rem}

Applying Theorem \ref{mainthm}  after replacing $X$ with $X \times \PP^1$ we  obtain geometric information about the action of the unipotent group $U$ on $X$: 

\begin{corollary} \label{cor:invariants}
In the situation above let $\hat{U}$ act diagonally on $X \times \PP^1$  where the action on $\PP^1$ is via \[\hat{u} \cdot [x:y]=[\chi_1 (\hat{u})x:y]\]
where $\chi_1:\hU \to \CC^*$ is the character of $\hU$ with kernel $U$ which fits into the extension $ \{1\} \to U \to \hU \to \CC^* \to \{ 1\} $,
and linearise this action using the tensor product of $L_\chi$ with $\calo_{\PP^1}(M)$ for suitable $M \geq 1$. Then $(X \times \PP^1) \env \hat{U}$ is a projective variety which is a geometric quotient by $\hat{U}$ of a $\hat{U}$-invariant open subset of $X \times \CC$ and contains as an open subset a geometric quotient of a $U$-invariant open subset $X^{\hat{s},U}$ of $X$ by $U$. 
\end{corollary}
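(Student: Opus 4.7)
The plan is to apply Theorem \ref{mainthm} to $X\times\PP^1$ equipped with the diagonal action of $\hat{U}$ and the very ample line bundle $L\otimes\calo_{\PP^1}(M)$ for $M\geq 1$ sufficiently large, twisted by $\chi/c$, and then read off both asserted quotients by restricting to $\hat{U}$-invariant open subvarieties.

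First I would verify the numerical hypotheses. Because $U$ acts trivially on $\PP^1$ via $\chi_1$, the $\CC^*$-fixed locus of $X\times\PP^1$ equals $X^{\CC^*}\times\{[1:0],[0:1]\}$, and the weight of $\CC^*$ on the fibre of $(L\otimes\calo_{\PP^1}(M))^*$ at $(z,[1:0])$ (respectively $(z,[0:1])$) is $\omega+M$ (respectively $\omega$), where $\omega$ is the weight on $L^*|_z$. Hence the new minimum weight is $\omega_{\min}$, attained exactly on $Z_{\min}\times\{[0:1]\}$; choosing $M$ large enough that $\omega_{\min}+M>\omega_1$, the next weight is again $\omega_1$, so the character $\chi/c$ remains adapted (and, since the well-adapted range near $\omega_{\min}$ is unaffected, well adapted) for the induced linearisation. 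The condition that \lq semistability coincides with stability' is inherited from $X$: the decomposition
\[ H^0(X\times\PP^1, L\otimes\calo_{\PP^1}(M)) = H^0(X,L)\otimes H^0(\PP^1,\calo_{\PP^1}(M)) \]
as $\hat{U}$-modules (with $U$ trivial on the second factor) identifies the $\CC^*$-weight $-\omega_{\min}$ subspace with $H^0(X,L)_{-\omega_{\min}}\otimes\CC\cdot y^M$, where $y$ is the weight-zero generator of $H^0(\PP^1,\calo_{\PP^1}(1))$; for any $\CC^*$-normalised $U'\leq U$ and any $\xi\in\Lie U\setminus\Lie U'$, the infinitesimal action on this tensor product is $\delta_\xi\otimes\id$ and the $U'$-invariants split as $H^0(X,L)^{U'}\otimes H^0(\PP^1,\calo_{\PP^1}(M))$, so the containment assumed on $X$ immediately gives the corresponding containment on $X\times\PP^1$.

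Theorem \ref{mainthm} now produces the projective geometric quotient $(X\times\PP^1)\env\hat{U}=(X\times\PP^1)^{s,\hat{U}}_{\min+}/\hat{U}$. The $\hat{U}$-invariant open subvariety $X\times\CC=X\times(\PP^1\setminus\{[1:0]\})$ meets $(X\times\PP^1)^{s,\hat{U}}_{\min+}$ in a $\hat{U}$-invariant open subset of $X\times\CC$, whose $\hat{U}$-quotient is open in $(X\times\PP^1)\env\hat{U}$; this yields the first claim of the corollary.

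For the $U$-quotient assertion I use that $U$ acts trivially on $\PP^1$ while $\CC^*$ acts freely on $\CC\setminus\{0\}\subset\PP^1$. Setting
\[ X^{\hat{s},U} := \{x\in X : (x,[1:1])\in (X\times\PP^1)^{s,\hat{U}}_{\min+}\}, \]
the slice $X^{\hat{s},U}\times\{[1:1]\}$ has $\hat{U}$-stabiliser equal to $U$ and meets every $\hat{U}$-orbit in $(X\times\CC^*)\cap(X\times\PP^1)^{s,\hat{U}}_{\min+}$ in a single $U$-orbit, so passing to quotients identifies $X^{\hat{s},U}/U$ with the $\hat{U}$-quotient of $(X\times\CC^*)\cap(X\times\PP^1)^{s,\hat{U}}_{\min+}$, which is an open subvariety of $(X\times\PP^1)\env\hat{U}$. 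The main obstacle is the bookkeeping required to transfer adaptedness, well-adaptedness and \lq ss=s' from $X$ to the product linearisation; once these numerical checks are in place the corollary follows from a direct application of Theorem \ref{mainthm} together with the slice argument made possible by the triviality of $U$ on $\PP^1$.
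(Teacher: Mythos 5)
Your proposal follows exactly the route the paper intends (and barely writes out): apply Theorem \ref{mainthm} to $X \times \PP^1$ with the product linearisation, after checking that adaptedness, well-adaptedness and \lq semistability coincides with stability' transfer from $X$ to $X \times \PP^1$. Your verification of these hypotheses is correct and in fact more detailed than anything in the paper: the new minimal weight is $\weight_{\min}$, attained exactly on $Z_{\min} \times \{[0:1]\}$, and the weight-$(-\weight_{\min})$ subspace of $H^0(X,L)\otimes H^0(\PP^1,\calo_{\PP^1}(M))$ is $H^0(X,L)_{-\weight_{\min}}\otimes \CC\, y^M$, on which the ss=s condition is inherited because $\delta_\xi$ acts as $\delta_\xi\otimes \id$. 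The slice argument for the second claim (the stabiliser of $X\times\{[1:1]\}$ in $\hU$ is $U$, and $\CC^*$ moves the slice transitively over $\CC^*\subset\PP^1$) is also the standard and correct way to extract the geometric quotient $X^{\hat{s},U}/U$ as an open subset.

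The one place your write-up falls short of the stated corollary is the first claim. You show that $X\times\CC$ meets the stable locus in a $\hU$-invariant open set whose quotient is \emph{open in} $(X\times\PP^1)\env\hU$; but the corollary asserts that the \emph{whole} of $(X\times\PP^1)\env\hU$ is a geometric quotient of an open subset of $X\times\CC$. This requires the additional observation that $(X\times\PP^1)^{s,\hU}_{\min+}$ is entirely contained in $X\times\CC$, i.e.\ that no stable point lies over $[1:0]$. That follows immediately from your own weight computation: a point of $X\times\{[1:0]\}$ is fixed in the $\PP^1$ direction, so its limit under $t\to 0$ lies in $X^{\CC^*}\times\{[1:0]\}$, which is disjoint from $Z_{\min}\times\{[0:1]\}$; hence $(X\times\PP^1)^0_{\min}\subseteq X\times(\PP^1\setminus\{[1:0]\}) = X\times\CC$. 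With that one line added, the proof is complete.
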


\begin{rem}\label{cor:invariants2}
We can also deduce that the algebra  $A = \oplus_{m=0}^\infty H^0(X\times \PP^1,L_{m\chi}^{\otimes cm} \otimes \mathcal{O}_{\PP^1}(M))^{\hat{U}}$ of $\hat{U}$-invariants on $X \times \PP^1$  is 
finitely generated for a well-adapted rational character $\chi/c$ of $\hat{U}$ when $c$ is a sufficiently divisible positive integer. This graded algebra $A$ can be identified with the subalgebra of the algebra of $U$-invariants  $\oplus_{m=0}^\infty H^0(X,L^{\otimes cm})^{{U}}$ on $X$ generated by the $U$-invariants in $\oplus_{m=0}^\infty H^0(X,L^{\otimes cm})^{{U}}$ which are weight vectors with non-positive weights for the action of $\CC^* \leq \hat{U}$ after twisting by the well-adapted rational character $\chi/c$.  The sections $\sigma$ of $L$ which are weight vectors with weight $-\weight_{min}$ are all $U$-invariant, and after twisting by $\chi/c$  these are the only weight vectors in $H^0(X,L)$ which have non-positive (in fact strictly negative) weights. If we localise the $U$-invariants at any such $\sigma$ then we get a finitely generated algebra of invariants $\OO(X_\sigma)^U$, since this algebra can be identified with the localisation of $A$ at $\sigma$. This can be proved directly, and leads to an alternative proof of Theorem \ref{mainthm} (cf. \cite{BDHK16}.
\end{rem}

Theorem \ref{mainthm} has another immediate corollary:

\begin{corollary} \label{cor1.2}
Let $H \cong R \ltimes U$ be a complex linear algebraic group with unipotent radical
$U$ and $R \cong H/U$ reductive, and suppose that $R$ contains a central
subgroup isomorphic to $\CC^*$ which acts by conjugation on the Lie algebra of $U$ with all weights strictly positive. Let $\hU$ be the subgroup of $H$ which is the semidirect product of $U$ and this central one-parameter subgroup $\CC^*$ of $R$.  Suppose that  $H$ acts linearly on a projective
variety $X$ 
 with respect to an ample line bundle $L$, and that $\chi: H \to \CC^*$ is a character of $H$, that $c$ is a sufficiently divisible positive integer such that the restriction to $\hU$ of the rational character $\chi/c$ is well adapted for the linear action of $\hU$ on $X$, and that the linear action of the unipotent radical $U$ satisfies the condition that \lq semistability coincides with stability' as above. Then the algebra of $H$-invariants $\oplus_{m=0}^\infty H^0(X,L_{m\chi}^{\otimes cm})^{H}$ is 
finitely generated, and the projective variety $X\env  H$ associated to this algebra of invariants is a categorical quotient of an open subvariety $X^{ss,H}$ of $X$ by $H$, and the canonical $H$-invariant morphism $\phi: X^{ss,H} \to X \env H$ is surjective with $\phi(x) = \phi(y)$ if and only if the closures of the $H$-orbits of $x$ and $y$ meet in $X^{ss,H}$.  
\end{corollary}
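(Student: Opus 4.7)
The approach is to factor the $H$-quotient through the $\hat{U}$-quotient: first apply Theorem \ref{mainthm} to the restriction of the $H$-action to $\hat{U}$, and then run classical reductive GIT for the remaining quotient group $H/\hat{U}$, which will turn out to be reductive.

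First I would apply Theorem \ref{mainthm} to the restricted action of $\hat{U}$ on $X$, linearised by $L$ and twisted by the restriction of $\chi/c$ to $\hat{U}$. By hypothesis this restricted rational character is well adapted and the $U$-action satisfies \emph{semistability coincides with stability}, so Theorem \ref{mainthm} supplies (i) a finitely generated graded algebra $A_{\hat{U}} = \bigoplus_{m \geq 0} H^0(X, L_{m\chi}^{\otimes cm})^{\hat{U}}$, (ii) a projective variety $Y := X \env \hat{U} = \Proj(A_{\hat{U}})$, and (iii) a $\hat{U}$-invariant open subvariety $X^{s,\hat{U}}_{\min+} \subseteq X$ whose induced morphism to $Y$ is a geometric quotient by $\hat{U}$.

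Next I would verify that $\hat{U}$ is normal in $H$. Because $\CC^* \subseteq R$ is central in $R$ and $R$ normalises $U$ (since $H = R \ltimes U$), a direct computation shows $g \hat{U} g^{-1} = \hat{U}$ for every $g \in H$, and $H / \hat{U} \cong R / \CC^*$. The quotient $R / \CC^*$ is reductive, being a reductive group modulo a central subgroup. Consequently the $H$-action on $X$ descends to an action of $R / \CC^*$ on $Y$, and the ample line bundle $\calo_Y(1)$ arising from the $\Proj$ construction inherits a natural $(R/\CC^*)$-linearisation from the $H$-action on the invariant sections $H^0(X, L_{m\chi}^{\otimes cm})^{\hat{U}}$.

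The final step is classical Mumford GIT applied to $R / \CC^*$ acting linearly on the projective variety $Y$. This yields finite generation of the $(R/\CC^*)$-invariants on $Y$, which agree termwise with $\bigoplus_m H^0(X, L_{m\chi}^{\otimes cm})^H$, together with a categorical quotient $Y \symp (R/\CC^*)$ of $Y^{ss, R/\CC^*}$. Taking $X^{ss, H}$ to be the preimage of $Y^{ss, R/\CC^*}$ in $X^{s,\hat{U}}_{\min+}$, we obtain $X \env H \cong Y \symp (R/\CC^*)$ as a categorical quotient of $X^{ss, H}$ by $H$, with $\phi$ surjective. The main technical issue is translating the classical orbit-closure criterion back from $Y$ to $X$: since $X^{s,\hat{U}}_{\min+} \to Y$ is a geometric quotient the $\hat{U}$-orbits are closed in $X^{s,\hat{U}}_{\min+}$, and combining this with the factorisation $H \cdot x = R \cdot (\hat{U} \cdot x)$ one deduces that $\phi(x) = \phi(y)$ if and only if the $H$-orbit closures of $x$ and $y$ meet inside $X^{ss, H}$.
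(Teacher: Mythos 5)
Your proposal is correct and follows essentially the same route as the paper: the authors likewise quotient first by $\hat{U}$ via Theorem \ref{mainthm} and then apply classical reductive GIT to the induced linear action of $H/\hat{U} \cong R/\CC^*$ on the projective enveloping quotient $X \env \hat{U}$, observing that $(A^{\hat{U}})^{R/\CC^*} = A^{H}$ is finitely generated by reductivity. Your additional checks (normality of $\hat{U}$ in $H$, descent of the linearisation, and the translation of the orbit-closure criterion back to $X$) are exactly the points the paper leaves implicit.
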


\begin{proof}
This result follows by quotienting $X$ first by $\hU$, using Theorem \ref{mainthm}, and then by the induced linear action of the reductive group $H/\hU \cong R/\CC^*$.
\end{proof}

\begin{rem} \label{wellvsadapted}
Note that we require the  rational character $\chi/c$ to be  {\it well} adapted to ensure that the algebra of $H$-invariants $\oplus_{m=0}^\infty H^0(X,L_{m\chi}^{\otimes cm})^{H}$ is 
finitely generated and that we obtain an induced ample line bundle on $X\env  H$. However, in order to obtain the  open subvariety $X^{ss,H}$ of $X$ and its categorical quotient which is the projective variety $X\env  H$, it is enough for the rational character to be merely adapted, not well adapted. Moreover both  $X^{ss,H}$ and the stable locus $X^{s,H}$ (which has a geometric quotient by the $H$-action) are determined by Hilbert--Mumford criteria, exactly as for classical GIT.
\end{rem}

\noindent {\bf Example:} 
Consider the weighted projective plane $\PP(1,1,2)$ which is $\CC^3 \setminus\{0\}$ modulo the action of $\CC^*$ with weights $1,1,2$.
The automorphism group of $\PP(1,1,2)$ is 
$$\mbox{Aut}(\PP(1,1,2)) \cong R \ltimes U$$
with $R \cong GL(2)$ reductive and 
$U \cong (\CC^+)^3$ unipotent; here $(\lambda,\mu,\nu) \in (\CC^+)^3 \cong U$ acts on 
 the weighted projective plane $\PP(1,1,2)$
as $[x,y,z]  \mapsto [x,y,z+\lambda x^2 + \mu xy + \nu y^2]$.
The central one-parameter subgroup $\CC^*$ of $R \cong GL(2)$ acts on
$Lie(U)$ with all positive weights, and the
associated extension
$\hat{U} = U \rtimes \CC^*$
can be identified with a subgroup of $\mbox{Aut}(\PP(1,1,2))$.
Thus Corollary \ref{cor1.2} applies to every linear action 
of $\mbox{Aut}(\PP(1,1,2))$ on a projective variety $X$ with respect to an ample line bundle $L$ after twisting by a well adapted rational character. 

The weighted projective plane $\PP(1,1,2)$ is a simple example of a toric
variety; in fact as we shall see in $\S$4 below, the automorphism group of
any complete simplicial toric variety satisfies the conditions of 
 Corollary \ref{cor1.2}.

\medskip

Our first motivation for considering linear actions of groups of the form
$\hat{U}$ in this article and in \cite{BK15} came from the study of jet differentials. 
The groups $\GG_k$ of $k$-jets of holomorphic reparametrizations of $(\CC,0)$ (and more generally the groups $\GG_{k,p}$ of $k$-jets of holomorphic reparametrizations of $(\CC^p,0)$ for $p \geq 1$) play an important role in the strategy of Demailly, Siu and others \cite{ahl,blo,dem,dmr,dr,gg, kob,merker, siu1,siu2,siu3} towards the Green-Griffiths conjecture on entire holomorphic curves in hypersurfaces of large degree in projective spaces. Here $\GG_k$ is a non-reductive complex linear
algebraic group which is a semi-direct product $\GG_k = 
 \UU_k \rtimes \CC^*$ of its unipotent radical $\UU_k$ by $\CC^*$ acting 
with weights $1,2,3,\ldots,k$ on the Lie algebra of $\UU_k$, while if $p>1$ then $\GG_{k,p} = 
 \UU_{k,p} \rtimes GL(p;\CC)$ where all the weights of the central one-parameter subgroup $\CC^*$ of $GL(p;\CC)$ on the Lie algebra of the unipotent radical $\UU_{k,p}$ 
of $\GG_{k,p}$ are strictly positive. So the results above apply
to linear actions of the
reparametrization group $\GG_k$  and 
its generalizations $\GG_{k,p}$
for $p \geq 1$).
In particular the reparametrization group $\GG_k$ acts fibrewise in a natural way on the  Semple jet bundle $J_k(T^*X) \to X$ over a complex manifold $X$ of dimension $n$ with fibre 
$$J_{k,x} \cong \bigoplus_{j=1}^k {\rm Sym}^j(\CC^n)$$
at $x$ consisting of the $k$-jets of holomorphic curves at $x$.
There is an induced action of $\GG_k$ on the polynomial ring
$\calo(J_{k,x})$, 
which can be identified with the algebra
$\oplus_{m=0}^\infty H^0(\PP(J_{k,x}),\calo_{ \PP(J_{k,x}) }(1)^{\otimes m})$ of sections of powers of the hyperplane line bundle on the associated projective space $\PP(J_{k,x})$,
%which can be twisted by the $m$th power of the
%character $\GG_k \to \CC^*$ with kernel $\UU_k$, 
and the bundle 
$E_{k} \to X$ of Demailly-Semple invariant jet differentials
of order $k$ %and weighted degree $m$
has fibre at $x$ given by
$(E_{k})_x = \calo(J_{k,x})^{\UU_k}.$

%Moreover there is an analogous result for any $p>1$.

%In \cite{merker} Merker gives an algorithmic description of these invariant jet differentials for $p=1$ and proves that if the algebra of invariants is finitely generated then his algorithm detects all of the generators. He also proves that the algebra of invariants is finitely generated when $k$ and $n=\dim X$ are small.

\medskip

The layout of this paper is as follows. $\S$1 reviews 
the results of \cite{DK} and \cite{BDHK} on non-reductive GIT, and 
$\S$2 considers the case when $\dim(U)=1$ and proves Theorem \ref{mainthm} in this case. $\S$3 uses these results to prove Theorem \ref{mainthm}
and Corollaries \ref{cor:invariants} and \ref{cor1.2}. 
In $\S$4 we observe that Corollary \ref{cor1.2} applies to the automorphism groups of all complete simplicial toric
varieties, while  
$\S$5 discusses applications to Demailly-Semple jet differentials and their generalisations to maps $\CC^p \to X$.  

\medskip

The authors would like to thank the anonymous referee for helpful suggestions on streamlining and clarifying our arguments.

\section{Classical and non-reductive geometric invariant theory}

Let $X$ be a complex quasi-projective variety and let $G$ be a complex
reductive group acting on $X$. 
 To apply (classical) geometric invariant theory (GIT)
we require a  {linearisation} of the action; that is, a
line bundle $L$ on $X$ and a lift $\mathcal{L}$ of the action of $G$ to $L$.

\begin{rem} Usually $L$ is assumed to be ample, and it makes no difference for classical GIT 
if we replace $L$ with $L^{\otimes k}$ for any integer $k>0$,
so then we lose little generality in supposing that for some
projective embedding 
$X \subseteq \PP^n$ 
the action
of $G$ on $X$ extends to an action on $\PP^n$ given by a
representation 
$$\rho:G\rightarrow GL(n+1),$$
and taking for $L$ the hyperplane line bundle on $\PP^n$.
\end{rem}

\begin{defn} \label{defn:s/ssred}
Let $X$ be a { quasi-projective} complex variety with an action of a 
complex reductive
group $G$ and linearisation $\mathcal{L}$ with respect to a line
bundle $L$ on $X$. Then $y \in X$ is {\em
semistable} for this linear action if there exists some $m > 0$
and $f \in H^0(X, L^{\otimes m})^G$ not vanishing at $y$ such that
the open subset
$$ X_f := \{ x \in X \ | \ f(x) \neq 0 \}$$
is affine, and $y$ is {\em stable} if also the action of $G$
on $X_f$ is closed with all stabilisers finite.
\end{defn}

\begin{rem} \label{reductiveenvquot} This definition comes from \cite{GIT}, although in \cite{GIT} the terminology \lq properly stable' is used instead of stable.
When
$X$ is projective and $L$ is ample and $f \in H^0(X, L^{\otimes
m})^G$ for $m > 0$, then $X_f$ is affine
if and only if $f$ is nonzero.
The reason for introducing the requirement that $X_f$ must be affine
in Definition \ref{defn:s/ssred} above is to ensure that $X^{ss}$ has
a quasi-projective categorical quotient $X^{ss} \to \xg$, which restricts to a geometric
quotient $X^s \to X^s/G$ (see \cite{GIT} Theorem 1.10). \end{rem}

From now on in this section we will assume that $X$ is projective and $L$ is ample. 
We have an induced action of $G$ on the
homogeneous coordinate ring 
$${\hat{\calo}}_L(X) = \bigoplus_{k \geq 0} H^0(X, L^{\otimes k}) $$
of $X$.
The
subring ${\hat{\calo}}_L(X)^G$ consisting of the elements of ${\hat{\calo}}_L(X)$
left invariant by $G$ is a finitely generated graded complex algebra
because $G$ is reductive, and
 the GIT quotient $X /\!/ G$ is the projective variety  $\Proj({\hat{\calo}}_L(X)^G)$. The subsets $X^{ss}$ and $X^s$ of $X$ are characterised by the following properties (see
\cite[Chapter 2]{GIT} or \cite{New}). 

\begin{prop} (Hilbert-Mumford criteria)
\label{sss} (i) A point $x \in X$ is semistable (respectively
stable) for the action of $G$ on $X$ if and only if for every
$g\in G$ the point $gx$ is semistable (respectively
stable) for the action of a fixed maximal torus of $G$.

\noindent (ii) A point $x \in X$ with homogeneous coordinates $[x_0:\ldots:x_n]$
in some coordinate system on $\PP^n$
is semistable (respectively stable) for the action of a maximal 
torus of $G$ acting diagonally on $\PP^n$ with
weights $\a_0, \ldots, \a_n$ if and only if the convex hull
$$\conv \{\a_i :x_i \neq 0\}$$
contains $0$ (respectively contains $0$ in its interior).
\end{prop}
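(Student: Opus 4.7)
The plan is to deduce both parts from the Hilbert--Mumford numerical criterion, which I will take as proved in \cite[Chapter 2]{GIT} (see also \cite{New}): a point $x \in X$ is semistable (respectively stable) for the linear action of the reductive group $G$ on the projective variety $X$ if and only if $\mu^L(x,\lambda) \leq 0$ (respectively $<0$) for every non-trivial one-parameter subgroup $\lambda: \CC^* \to G$, where $\mu^L(x,\lambda)$ denotes the weight of the induced $\CC^*$-action on the fibre of $L$ over the limit point $\lim_{t\to 0}\lambda(t)\cdot x$. I would quote this numerical criterion and then derive (i) and (ii) as essentially formal consequences.

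For part (i), the key ingredient is the standard fact from the structure theory of reductive groups that every one-parameter subgroup of $G$ is conjugate into a fixed maximal torus $T$. Combining this with the identity $\mu^L(gx,\lambda)=\mu^L(x,g^{-1}\lambda g)$ (which follows immediately from the definition of $\mu^L$ and equivariance of $L$), the condition \lq\lq$\mu^L(x,\lambda)\leq 0$ for every 1-PS $\lambda$ of $G$'' becomes \lq\lq$\mu^L(gx,\lambda')\leq 0$ for every $g\in G$ and every 1-PS $\lambda'$ of $T$,'' which is exactly the condition that every translate $gx$ is semistable for $T$. The stable statement is identical, with strict inequalities, once one observes that the $G$-stabiliser of $x$ is finite if and only if the $T$-stabiliser of every $gx$ is finite (indeed, if some $gx$ has positive-dimensional $T$-stabiliser, then $x$ has positive-dimensional stabiliser under the conjugate torus $g^{-1}Tg$, hence under $G$).

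For part (ii), after fixing a basis identifying the cocharacter lattice of the maximal torus $T$ of $\GL(n+1)$ with $\ZZ^{r}$, a 1-PS $\lambda_v(t)$ parametrised by $v\in\ZZ^r$ acts on $[x_0:\cdots:x_n]$ by $[t^{\langle\alpha_0,v\rangle}x_0:\cdots:t^{\langle\alpha_n,v\rangle}x_n]$. The limit as $t\to 0$ is the point supported on those indices $i$ with $x_i\neq 0$ minimising $\langle\alpha_i,v\rangle$, on which $\CC^*$ acts on $L$ with weight $-\min\{\langle\alpha_i,v\rangle:x_i\neq 0\}$ in the convention above. Thus the numerical criterion reads: $x$ is semistable if and only if $\min\{\langle\alpha_i,v\rangle:x_i\neq 0\}\leq 0$ for every $v\in\ZZ^r$, and stable if and only if the minimum is strictly negative for every non-zero $v$. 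By continuity and homogeneity this extends from $\ZZ^r$ to $\RR^r$, and then a standard separating hyperplane argument (applied to the finite set $\{\alpha_i: x_i\neq 0\}\subset\liets$) shows that this is equivalent to $0$ lying in $\conv\{\alpha_i: x_i\neq 0\}$, respectively in its relative interior.

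The principal obstacle in such a proof is really the Hilbert--Mumford numerical criterion itself, whose proof uses the valuative criterion of properness together with the Iwahori decomposition to reduce to one-parameter subgroups; once that result is in hand, the rest of the argument is the conjugation reduction of part (i) and the convex-geometric translation of part (ii), both of which are essentially formal.
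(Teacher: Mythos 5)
The paper does not prove this proposition at all: it is quoted directly from the literature (\cite[Chapter 2]{GIT}, \cite{New}), so there is no in-paper argument to compare against. Your proposal is the standard proof given in those references --- reduce everything to the Hilbert--Mumford numerical criterion, use the conjugacy of one-parameter subgroups into a fixed maximal torus together with $\mu^L(gx,\lambda)=\mu^L(x,g^{-1}\lambda g)$ for part (i), and the explicit weight computation plus a separating-hyperplane argument for part (ii) --- and it is correct, modulo one slip in the last step. Your final translation says stability corresponds to $0$ lying in the \emph{relative} interior of $\conv\{\alpha_i : x_i\neq 0\}$, but the condition you correctly derived, namely $\min\{\langle\alpha_i,v\rangle : x_i\neq 0\}<0$ for every nonzero $v$, forces the convex hull to be full-dimensional (otherwise a nonzero $v$ orthogonal to its span gives minimum $0$), so it is equivalent to $0$ lying in the interior, exactly as the proposition states. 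Also, the weights $\alpha_i$ are characters of the chosen maximal torus of $G$, not of $\GL(n+1)$, though this does not affect the argument. With those two cosmetic corrections the proof is complete, granting the numerical criterion, which is indeed the only substantive input.
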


Now let $H$ be any affine algebraic 
group, with unipotent radical $U$, acting linearly on a complex projective variety $X$ with respect to an ample line bundle $L$. Then the ring of invariants 
$${\hat{\calo}}_L(X)^H = \bigoplus_{k \geq 0} H^0(X, L^{\otimes k})^H$$
is not necessarily finitely generated as a graded complex algebra,
so that $\Proj({\hat{\calo}}_L(X)^H)$ is not well-defined as a projective variety, although
$\Proj({\hat{\calo}}_L(X)^H)$ does make sense as a scheme, and the
inclusion of ${\hat{\calo}}_L(X)^H$ in ${\hat{\calo}}_L(X)$ gives us a rational map of schemes 
$q$ from $ X$ to $ \Proj({\hat{\calo}}_L(X)^H)$, whose
image is a constructible subset of $\Proj({\hat{\calo}}_L(X)^H)$ (that is, a finite union of
locally closed subschemes). 
The action on $X$ of the unipotent radical $U$ of $H$
is studied in \cite{DK} following earlier work \cite{F2,F1,GP1,GP2,W}.

\begin{defn} (See \cite{DK} $\S$4). \label{defnssetc}
Let $I = \bigcup_{m>0} H^0(X,L^{\otimes m})^U$
and for $f \in I$ let $X_f$ be the $U$-invariant affine open subset
of $X$ where $f$ does not vanish, with ${\calo}(X_f)$ its coordinate ring. 
A point $x \in X$ is called {\em naively semistable} if %the rational map $q$ from
%$X$ to $ \Proj({\hat{\calo}}_L(X)^U)$ is
%well-defined at $x$; that is, if
 there exists some $f \in I$
which does not vanish at $x$, and the set of naively semistable points 
is denoted $X^{nss}= \bigcup_{f \in I} X_f$.
The {\em finitely generated semistable set} of $X$ is  $X^{ss,
fg} =  \bigcup_{f \in I^{fg}} X_f$ where
$$I^{fg} = \{f
\in I \ | \ {\calo}(X_f)^U
\mbox{ is finitely generated }   \}.$$
The set of {\em naively stable}
points of $X$ is
     $X^{ns} = \bigcup_{f \in I^{ns}} X_f$ where
$$I^{ns} = \{f
\in I^{fg} \ | \  %{\calo}(X_f)^U
%\mbox{ is finitely generated, and }$$
  q: X_f \longrightarrow
\Spec({\calo}(X_f)^U) \mbox{ is a geometric quotient} \},$$
and the set of {\em locally trivial stable} points is $ X^{lts} =
\bigcup_{f \in I^{lts} } X_f$ where
\begin{eqnarray*} I^{lts}\ \  =\ \  \{f
\in I^{fg}  \ | \  %{\calo}(X_f)^U
%\mbox{ is finitely generated, and \ \ \ \ \ \ } \\
    q: X_f \longrightarrow \Spec({\calo}(X_f)^U) \mbox{ is a locally trivial
geometric quotient} \}. \end{eqnarray*}
%\end{defn}
%\begin{defn}
\label{defn:envelopquot}
%Let $q: X^{ss, fg} \rightarrow \Proj({\hat{\calo}}_L(X)^U)$ be the natural
%morphism of schemes.  
The {\em enveloped quotient} of
$X^{ss,fg}$ is $q: X^{ss, fg} \rightarrow q(X^{ss,fg})$, where
$q: X^{ss, fg} \rightarrow \Proj({\hat{\calo}}_L(X)^U)$ is the natural
morphism of schemes and
$q(X^{ss,fg})$ is a dense constructible subset of the {\em
enveloping quotient}
$$X \env  U = \bigcup_{f \in I^{fg}}
\Spec({\calo}(X_f)^U)$$ of $X^{ss, fg}$. 
%\end{defn}

%\begin{rem} 
Motivated by \cite{DK} 5.3.1 and 5.3.5,
we call 
%\begin{defn}\label{defn:main} {\rm (\cite{DK} 5.3.7).}
%Let $X$ be a projective variety
%equipped with a linear $U$-action. A 
a point $x \in X$ %will be called
{\em stable} for the linear $U$-action if $x \in X^{lts}$ and {\em semistable} if $x
\in X^{ss, fg}$. We write $X^s$ (or $X^{s,U}$) for $X^{lts}$,
and we write $X^{ss}$ (or $X^{ss,U}$) for $X^{ss,fg}$ (cf. \cite{DK} 5.3.7).
\end{defn}

%\end{rem}

\begin{rem} $q(X^{ss})$ is 
not necessarily a subvariety of $X \env U $ (see for  example \cite{DK} $\S$6).
\end{rem}

\begin{rem} 
If ${\hat{\calo}}_L(X)^U$ is finitely generated then $X\env U$ is the corresponding projective
variety $\Proj({\hat{\calo}}_L(X)^U)$ \cite{DK}.
%\end{prop}
 In    \label{remclaim}
{\rm \cite{DK} 4.2.9 and 4.2.10} it is also claimed that 
 \label{patching2} 
the enveloping quotient
$X \env U$ is alwasy a quasi-projective variety with an ample line bundle
$L_H \to X \env U$ which pulls back to a positive tensor power of $L$
under the natural map $q:X^{ss} \to X \env U$. The argument given there fails in general since the morphisms $X_f \to 
\Spec({\calo}(X_f)^U)$ for $f \in I^{ss,fg}$ are not necessarily surjective. However it is still true that the enveloping quotient 
$X \env  U$ has quasi-projective open subvarieties (\lq inner enveloping quotients' $X \inenv H$) which contain the enveloped quotient $q(X^{ss})$  and have ample line bundles pulling back to  positive tensor powers of $L$
under the natural map $q:X^{ss} \to X\env U$ (see \cite{BDHK} for details). 
\end{rem}

The results of \cite{DK}  can be generalised to allow us to study $H$-invariants for linear algebraic groups $H$ which are neither unipotent nor reductive \cite{BDHK,BK15}. Over $\CC$ any linear algebraic group $H$ is a semi-direct product $H=U\rtimes R$ where $U \subset H$ is the unipotent radical of $H$ (its maximal unipotent normal subgroup) and $R\simeq H_r = H/U$ is a reductive subgroup of $H$. When $H$ acts linearly on a projective variety $X$ with respect to an ample line bundle $L$,
 the naively semistable and (finitely generated) semistable sets $X^{nss}$ and $X^{ss} = X^{ss,fg}$, enveloped and enveloping quotients and inner enveloping quotients
$$q: X^{ss} \to q(X^{ss}) \subseteq X \inenv H \subseteq X \env H$$
 are defined in \cite{BDHK}  as for the unipotent case in Definition \ref{defnssetc} and Remark \ref{remclaim}. However the definition of the stable set $X^s$ combines the unipotent and reductive cases as follows.

\begin{defn} \label{def:GiSt1.1}
Let $H$ be a linear algebraic group acting on an irreducible variety $X$ and $L \to X$ a
linearisation for the action. The \emph{stable locus} is the open subset
\[
X^{\rms}= \bigcup_{f \in I^{\rms}} X_f
\]
of $X^{ss}$, where $I^{\rms} \subseteq \bigcup_{r>0} H^0(X,L^{\ten r})^H$ is the subset
of $H$-invariant sections satisfying the following conditions:
%%% itm:GiSt1.1 %%%
\begin{enumerate}
\item \label{itm:GiSt1.1-1} the open set $X_f$ is affine;
\item \label{itm:GiSt1.1-2} the action of $H$ on $X_f$ is closed with all
  stabilisers finite groups; and
\item \label{itm:GiSt1.1-3} the restriction of the $U$-enveloping quotient map
 \[
q_{U}:X_f \to \spec((S^{U})_{(f)})
\]
is a principal $U$-bundle for the action of $U$ on $X_f$. 
\end{enumerate}

If it is necessary to indicate the group $H$ we will write $X^{s,H}$ and $X^{ss,H}$ for $X^s$ and $X^{ss}$.
\end{defn}
%%% rmk:GiSt2 %%%
\begin{rem} \label{rmk:GiSt2}
This definition of stability extends the definition of
stability in \cite{DK} for unipotent groups, and in the case
where $H$ is reductive, then $U$ is trivial and the definition reduces to Mumford's notion of 
properly stable points in \cite{GIT}.    Note that as one would hope

(i) if $R$ is a reductive subgroup of $H$ then it follows straight from the
definition that $X^{s,R} \subseteq X^{s,H}$; 

(ii) if $N$ is a normal subgroup of $H$ such that the canonical projection $U \to
U/N_u$ splits, and if $W$ is an $H$-invariant open
subvariety of $X^{s,N}$ with a geometric quotient $W/N$  which is an
$H/N$-invariant open subvariety of $X^{s,N}/N \subseteq X\inenv N$, where
$X\inenv N$ is an inner enveloping quotient of $X$ by $N$ such that  a tensor
power $L^{\otimes m}$ of $L$ induces a very ample line bundle on $X\inenv N$ and
hence an embedding of $X \inenv N$ in the corresponding projective space with
closure $\overline{X\inenv N}$, and if $W/N \subseteq (\overline{X \inenv
  N})^{s,H/N}$, then $W \subseteq X^{s,H}$. 

Note also however that, in contrast with classical GIT for reductive group actions with ample linearisations, if $Y$ is an $H$-invariant subvariety of $X$ and we restrict the linearisation of the $H$-action on $X$ to $Y$, then $Y^{s,H}$ and $Y^{ss,H}$ do {\it not} necessarily coincide with $Y \cap X^{s,H}$ and $Y \cap X^{ss,H}$.
\end{rem}

The following result which we will need is proved in \cite{BDHK} Cor 3.1.20.

\begin{prop} \label{cor:GiFi5} Suppose $H$ is a linear algebraic group, $X$ an
  irreducible $H$-variety and $L \to X$ a linearisation. If the
  enveloping quotient $X \env H$ is quasi-compact and complete, then 
%$X \env   H=\proj(S^H)$. Furthermore,
 for suitably
  divisible integers $r>0$  %the sheaf $\OO_{X \env H}(r)$ is an ample line bundle    on $X \env H$
the algebra of invariants $\bigoplus_{k \geq 0} H^0(X, L^{\otimes kr})^H$ is
finitely generated and the enveloping quotient $X \env H$ is the associated
projective variety; moreover the line bundle $L^{\otimes r}$ induces a very ample
line bundle $L^{\otimes r}_{[H]}$ on $X\env H$ with a natural isomorphism
\[
\bigoplus_{k \geq 0} H^0(X, L^{\otimes kr})^H \cong \bigoplus_{k \geq 0} H^0(X\env H, (L^{\otimes r}_{[H]})^{\otimes k}) .
\]
\end{prop}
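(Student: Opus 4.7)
My plan is to combine the quasi-compactness hypothesis with the theory of inner enveloping quotients recalled in Remark \ref{remclaim} (due to \cite{BDHK}): I would produce an inner enveloping quotient $X \inenv H$ that exhausts all of $X \env H$, and then use completeness to upgrade its ample line bundle to a very ample line bundle on a projective variety.

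Concretely, I would first use the definition $X \env H = \bigcup_{f \in I^{fg}} \Spec(\OO(X_f)^H)$ together with quasi-compactness to extract a finite subcover by pieces $\Spec(\OO(X_{f_i})^H)$ with $f_i \in H^0(X, L^{\otimes r_i})^H$. Taking $r$ to be any common multiple of the $r_i$ and replacing each $f_i$ by $f_i^{r/r_i} \in H^0(X, L^{\otimes r})^H$ preserves the cover, and the resulting finite family of sections in a single tensor power defines an inner enveloping quotient $X \inenv H$ which, by \cite{BDHK}, is a quasi-projective open subvariety of $X \env H$ carrying an ample line bundle $L^{\otimes r}_{[H]}$ that pulls back to $L^{\otimes r}$ on $X^{ss,H}$. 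Since our choice of $f_i$ forces the open cover of $X \env H$ by the associated affine pieces to lie inside $X \inenv H$, the open inclusion $X \inenv H \hookrightarrow X \env H$ is actually surjective and hence an isomorphism of schemes. Thus $X \env H$ is quasi-projective, and together with the completeness hypothesis this gives that $X \env H$ is projective.

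It remains to derive finite generation of the full graded algebra of $H$-invariants and the asserted isomorphism. Passing to a further multiple of $r$ if necessary, $L^{\otimes r}_{[H]}$ becomes very ample on the projective variety $X \env H$, so $\bigoplus_{k \geq 0} H^0(X \env H, (L^{\otimes r}_{[H]})^{\otimes k})$ is a finitely generated graded algebra whose $\Proj$ is $X \env H$. Pullback along $q: X^{ss,H} \to X \env H$ identifies these sections with the $H$-invariant sections of $L^{\otimes kr}$ on $X^{ss,H}$, via the fact that $L^{\otimes r}_{[H]}$ pulls back to $L^{\otimes r}$ and that $H$-invariant sections are constant on fibres of the categorical quotient; and restriction from $X$ to the dense open $X^{ss,H}$ identifies these in turn with $H^0(X, L^{\otimes kr})^H$, since on each affine piece $X_{f_i}$ the ring $\OO(X_{f_i})^H$ is already finitely generated and matches sections over the corresponding piece of $X \env H$. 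The main obstacle is this last identification — verifying that every $H$-invariant section on $X^{ss,H}$ extends across the unstable locus of $X$ and agrees with a global invariant section of $L^{\otimes kr}$ — which requires the detailed glueing properties of the affine pieces established in \cite{BDHK}, together with care about scheme structure rather than merely the underlying set in the identification $X \inenv H \cong X \env H$.
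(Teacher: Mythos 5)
The paper does not prove this proposition itself: it is quoted verbatim from \cite{BDHK}, Corollary 3.1.20, so there is no in-text argument to compare yours against step by step. That said, your outline follows exactly the route that the machinery recalled in Remark \ref{remclaim} is designed to support, and the first two-thirds of it are sound: quasi-compactness extracts finitely many $f_i \in I^{fg}$; passing to a common tensor power $L^{\otimes r}$ and replacing $f_i$ by $f_i^{r/r_i}$ changes neither $X_{f_i}$ nor $\OO(X_{f_i})^H$; the resulting inner enveloping quotient is an open subscheme of $X \env H$ whose affine pieces cover it, so the open immersion $X \inenv H \hookrightarrow X\env H$ is an isomorphism; and quasi-projectivity plus completeness gives projectivity, whence a further multiple of $r$ makes $L^{\otimes r}_{[H]}$ very ample.

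The genuine gap is the one you flag in your final paragraph, and it cannot be closed by ``restriction from $X$ to the dense open $X^{ss,H}$'': the isomorphism $\bigoplus_k H^0(X,L^{\otimes kr})^H \cong \bigoplus_k H^0(X\env H, (L^{\otimes r}_{[H]})^{\otimes k})$ is essentially equivalent to the finite generation you are trying to prove, so it is the crux rather than an afterthought. Injectivity of the natural map $s \mapsto (s/f_i^k)_i$ follows from irreducibility of $X$, but surjectivity requires showing that compatible local data $t_i \in \OO(X_{f_i})^H$ come from a single global invariant of degree $kr$; a priori each $t_i$ only has the form $s_i/f_i^{m_i}$ with $s_i$ an invariant of higher degree, and an invariant section defined only on $X^{ss,H}$ need not extend across the unstable locus when its complement has codimension one. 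The missing argument is along the following lines: set $B = \bigoplus_k H^0(X\env H, (L^{\otimes r}_{[H]})^{\otimes k})$, which is finitely generated with $\Proj(B) = X\env H$ once projectivity and very ampleness are in hand, and let $A$ be the image of the invariants in $B$. Then the $f_i$ are degree-one elements of $A$ with empty common zero locus on $\Proj(B)$, and $A_{(f_i)} = B_{(f_i)} = \OO(X_{f_i})^H$; from $b/f_i^k \in A_{(f_i)}$ one gets $b\,f_i^N \in A$ for $N \gg 0$, and combining this with the fact that every element of $B_+$ has a power lying in the ideal generated by the $f_i$ one deduces that $A$ and $B$ agree in all suitably divisible degrees. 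This is precisely what the cited result in \cite{BDHK} supplies; without it your argument establishes that $X \env H$ is projective but not the finite generation claim or the displayed isomorphism.
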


 If a linear algebraic group $H = U \rtimes R$ with unipotent radical $U$ is a subgroup of a reductive group $G$ then there is an induced right action of $R$ on $G/U$ which commutes with the left action of $G$. 
Similarly if $H$ acts on a projective variety $X$ then there is an induced action of $G\times R$ on $G\times_{U}X$ with an induced $G\times R$-linearisation. The same is true if we replace the requirement that $H$ is a subgroup of $G$ with the existence of a group homomorphism $H\to G$ which is $U$-faithful (that is, its restriction to $U$ is injective).
We can thus consider the situation when $X$ is a nonsingular complex projective variety acted on by a linear algebraic group $H=U\rtimes R$, while $L$ provides  a very ample linearisation of the $H$ action defining an embedding $X\subseteq \PP^n$, and $H \to G$ is an $U$-faithful homomorphism into a reductive subgroup $G$ of $\SL(n+1;\CC)$ with respect to an ample line bundle $L$.

\begin{rem} \label{def:GiSt1.3}
\label{thm:fgcriteriageneral}
Suppose that $L'$ is a $G\times R$-linearisation over a  nonsingular projective completion $\overline{G \times_{U} X}$ of $G \times_{U} X$ extending the $G\times R$ linearisation over $G \times_{U} X$ induced by $L$. 
Let $D_1, \ldots , D_r$ be the codimension one components of the boundary of $G
\times_{U} X$ in $\overline{G \times_{U} X}$, and suppose for all sufficiently divisible $N$  that
$L'_N=L^{\prime}[N
\sum_{j=1}^r D_j]$ is an ample line bundle on $\overline{G \times_{U} X}$. 
Then as noted in \cite{BK15} the proof of \cite{DK} Theorem 5.1.18  tells us that the algebra of invariants 
$\bigoplus_{k \geq 0} H^0( X,L^{\otimes k})^H$ is finitely generated if and only if  
for all sufficiently divisible $N$ any $G\times R$-invariant section of a positive tensor power of $L'_N$ vanishes on every codimension one component $D_j$. 
Moreover when this happens the enveloping quotient satisfies
\begin{equation}\label{equotient}
X\env H =\mathrm{Proj}(\oplus_{k \geq 0} H^0( X,L^{\otimes k})^H)\cong \overline{G \times_{U} X}/\!/_{L'_N} (G\times R)
\end{equation}
for sufficiently divisible $N$. 
\end{rem}

In general even when the algebra of invariants $\bigoplus_{k \geq 0} H^0( X,L^{\otimes k})^H$ on $X$ is finitely generated and \eqref{equotient} is true, the morphism $X \to X\env H$ is not surjective and in order to study the geometry of $X\env H$ by identifying it with $\overline{G \times_{U} X}/\!/_{L'_N} (G\times R)$ we need information about the boundary $\overline{G \times_{U} X}\setminus G\times_{U} X$ of $\overline{G \times_{U} X}$. If, however, we are lucky enough to have a situation where for sufficiently divisible $N$ the line bundle $L'_N$ is  ample  and the boundary $\overline{G \times_{U} X}\setminus G\times_{U} X$ is unstable for the linear action of $G \times R$, then the picture  is almost as well behaved as for reductive group actions on projective varieties with ample linearisations, as follows.

\begin{definition} Let $X^{\barss}=X\cap \, \overline{G \times_{U} X}^{ss,G\times R}$ and $X^{\bars}=X\cap \, \overline{G \times_{U} X}^{s,G\times R}$
where $X$ is embedded in $G \times_{U} X$ in the obvious way as $x\mapsto [1,x]$. 
\end{definition}

\begin{theorem}\label{thm:geomcor} {\rm (\cite{BK15} Thm 2.9 and \cite{DK} 5.3.1 and 5.3.5))}.
Let $X$ be a complex projective variety acted on by a linear algebraic group $H=U\rtimes R$ where $U$ is the unipotent radical of $H$ and let $L$ be a very ample linearisation of the $H$ action defining an embedding $X\subseteq \PP^n$. Let $H \to G$ be an $U$-faithful homomorphism into a reductive subgroup $G$ of $\SL(n+1;\CC)$ with respect to an ample line bundle $L$. Let $L'$ be a $G\times R$-linearisation over a projective completion $\overline{G \times_{U} X}$ of $G \times_{U} X$ extending the $G\times R$ linearisation over $G \times_{U} X$ induced by $L$. 
Let $D_1, \ldots , D_r$ be the codimension $1$ components of the boundary of $G
\times_{U} X$ in $\overline{G \times_{U} X}$, and suppose  that 
$L'_N=L^{\prime}[N
\sum_{j=1}^r D_j]$ is an ample line bundle on $\overline{G \times_{U} X}$  for all sufficiently divisible $N$. If 
for all sufficiently divisible $N$ any $G\times R$-invariant section of a positive tensor power of $L'_N$ vanishes on the 
 boundary of $G
\times_{U} X$ in $\overline{G \times_{U} X}$, then 
\begin{enumerate}
\item $X^{\bar{s}} = X^s$ and $X^{ss} = X^{\overline{ss}}$;
\item the algebra of invariants 
$\bigoplus_{k \geq 0} H^0( X,L^{\otimes k})^H$ is finitely generated;
\item the enveloping quotient $X\env H \cong \overline{G \times_{U} X}/\!/_{L'_N} (G\times R)\cong \mathrm{Proj}(\oplus_{k \geq 0} H^0( X,L^{\otimes k})^H)$ for sufficiently divisible $N$;
\item $\overline{G \times_{U} X}^{ss,G\times R, L'_N} \subseteq G\times_{U} X$ and therefore the morphism 
\[ \phi: %X^{ss,H,L}
X^{\barss} \rightarrow X\env H\]
is surjective and $X\env H$ is a categorical quotient of $%X^{ss,H,L}
X^{\barss}$;
\item if $x,y \in %X^{ss,H,L}
X^{\barss}$ then $\phi(x) = \phi(y)$ if and only if the closures of the $H$-orbits of $x$ and $y$ meet in $%X^{ss,H,L}
X^{\barss}$;
\item $\phi$ restricts to a geometric quotient $X^{\bars} \rightarrow X^{\bars}/H \subseteq  X\env H$.
\end{enumerate} 
\end{theorem}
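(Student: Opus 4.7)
The plan is to leverage classical GIT for the reductive group $G \times R$ acting with ample linearisation $L'_N$ on $\olGUX$, and to transfer the resulting quotient structure back to the $H$-action on $X$ via the embedding $\iota: X \hookrightarrow \GUX$, $x \mapsto [1, x]$. The pivotal input is the hypothesis that every $G \times R$-invariant section of a positive tensor power of $L'_N$ vanishes on the boundary of $\GUX$ in $\olGUX$; this forces the boundary to be contained in the unstable locus, so that the Mumford quotient of the closure behaves as if the closure itself were $\GUX$.

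My first step is to establish conclusion (4), namely
\[
\olGUX^{ss, G \times R, L'_N} \subseteq \GUX.
\]
Since $L'_N$ is ample, a point $p \in \olGUX$ is Mumford semistable iff there exists a $G \times R$-invariant section $f$ of $(L'_N)^{\otimes k}$ with $f(p) \neq 0$. By hypothesis every such $f$ vanishes on the boundary, so $p \in \GUX$. The surjectivity of $\phi$ and the fact that $X \env H$ is a categorical quotient of $X^{\barss}$ then follow from the classical surjectivity of $\olGUX^{ss} \to \olGUX /\!/ (G \times R)$: every $G \times R$-semistable orbit meets $\GUX$ by (4), and after acting by $G$ we may arrange it to meet $\iota(X)$.

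Next I would derive (2) and (3) by combining classical GIT for $G \times R$ on $\olGUX$ with the machinery of Remark \ref{def:GiSt1.3}. Classical GIT gives finite generation of the $G \times R$-invariant section ring of $L'_N$; the vanishing hypothesis identifies this ring, in each sufficiently high graded degree, with $\bigoplus_k H^0(\GUX, L^{\otimes k})^{G \times R}$, and the standard identification $H^0(X, L^{\otimes k})^H \cong H^0(\GUX, L^{\otimes k})^{G \times R}$, valid because $H \to G$ is $U$-faithful, pins it down as $\bigoplus_k H^0(X, L^{\otimes k})^H$. Taking Proj yields the isomorphism in (3).

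For (1), the inclusions $X^{\barss} \subseteq X^{ss}$ and $X^{\bars} \subseteq X^{s}$ come from restricting $G \times R$-invariant sections along $\iota$ to produce $H$-invariant sections on $X$ whose non-vanishing loci inherit the affineness, closed-orbit, and principal-$U$-bundle conditions of Definitions \ref{defnssetc} and \ref{def:GiSt1.1}. The reverse inclusions use the same correspondence in the other direction, combined with the vanishing hypothesis to extend $G \times R$-invariants on $\GUX$ to sections on the closure without introducing zeros on $\iota(X)$. Statements (5) and (6) then follow by restricting along $\iota$ the classical orbit-closure criterion and the geometric-quotient-on-the-stable-locus statement for $G \times R$ on $\olGUX$: $G \times R$-orbits of $[1,x]$ and $[1,y]$ have meeting closures in $\olGUX^{ss}$ iff the $H$-orbits of $x$ and $y$ have meeting closures in $X^{\barss}$, precisely because (4) prevents any such orbit closure from escaping into the boundary. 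The main technical obstacle is controlling this last point, and this is ultimately what the vanishing-on-the-boundary hypothesis secures.
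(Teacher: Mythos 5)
Your overall architecture is the right one, and it matches the route taken by the sources the paper cites for this theorem (the paper gives no self-contained proof of Theorem \ref{thm:geomcor}; it is imported from \cite{BK15} Theorem 2.9 and \cite{DK} 5.3.1 and 5.3.5, whose mechanism is exactly the one you describe). In particular your step (4) is correct and is the pivot of the whole argument: ampleness of $L'_N$ means a point of $\olGUX$ is semistable iff some invariant section of a positive power of $L'_N$ is nonzero there, and the vanishing hypothesis then forces $\olGUX^{ss,G\times R,L'_N}\subseteq \GUX$, from which surjectivity of $\phi$ and statements (5)--(6) follow by restricting the classical reductive picture along $x\mapsto[1,x]$.

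There is, however, a genuine gap in your treatment of (2)--(3) and of the reverse inclusions in (1). You assert that the vanishing hypothesis ``identifies this ring, in each sufficiently high graded degree, with $\bigoplus_k H^0(\GUX,L^{\otimes k})^{G\times R}$,'' and hence with $\bigoplus_k H^0(X,L^{\otimes k})^H$. That identification is not automatic. Restriction from $\olGUX$ to $\GUX$ is injective on sections, but surjectivity is exactly the delicate point: an $H$-invariant section of $L^{\otimes k}$ on $X$ corresponds to a $G\times R$-invariant section of $L^{\otimes k}$ on the quasi-projective variety $\GUX$, and such a section extends to a section of $(L'_N)^{\otimes k}=(L')^{\otimes k}[Nk\sum_j D_j]$ on the completion only if its pole orders along the boundary divisors $D_j$ are at most $Nk$. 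These pole orders are finite for each individual section but are not uniformly bounded, so for a \emph{fixed} $N$ you only capture a subalgebra of the invariants, and your argument as written proves finite generation of that subalgebra rather than of $\bigoplus_k H^0(X,L^{\otimes k})^H$. The same issue affects your claimed inclusion $X^{ss}\subseteq X^{\overline{ss}}$ in (1), which requires extending the invariants witnessing semistability to the completion. The cited proof closes this gap in two ways that your sketch omits: first, one fixes a \emph{finite} fully separating set of invariants and chooses $N$ large enough that every member of that finite set extends to a $G\times R$-invariant section over $\olGUX$ (this is the ``fine strong reductive envelope'' condition, and is why the theorem is stated for all \emph{sufficiently divisible} $N$); second, rather than identifying graded pieces directly, one concludes that the enveloping quotient $X\env H$ equals the projective GIT quotient $\olGUX/\!/_{L'_N}(G\times R)$ and then invokes Proposition \ref{cor:GiFi5} (completeness of the enveloping quotient implies finite generation of the invariants, after passing to a suitable Veronese), together with the criterion recorded in Remark \ref{def:GiSt1.3}. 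With those two ingredients inserted, your outline becomes the standard proof.
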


\section{Actions of $\CC^+ \rtimes \CC^*$}

We will prove Theorem \ref{mainthm} by induction on the dimension of $U$. In this section we will concentrate on the case when $\dim(U) = 1$ so that $U \cong \CC^+$. 

\begin{defn} \label{defn:ss=sdimone}
Let $X$ be a complex projective variety equipped with a linear action
(with respect to an ample line bundle $L$) of  a
semi-direct product $\hat{U} = \CC^* \ltimes U$, where $U$ is unipotent and
 the weights of
the induced $\CC^*$ action on the Lie algebra of
$U$ are all strictly positive.
When $\xi$ is an element of the Lie algebra of $U$ let  $\delta_\xi: H^0(X, L) \to H^0(X, L)$ define the infinitesimal action of $\xi$ on $H^0(X,L)$. 
We say that {\it semistability coincides with stability} for this linear action  if 
 whenever $U'$ is a subgroup of $U$ normalised by $\CC^*$ and $\xi$ belongs to the Lie algebra of $U$ but not the Lie algebra of $U'$, then the weight space with maximum weight $- \weight_{\min}$ for the action of $\CC^*$ on $H^0(X,L)$ is contained in the image $\delta_\xi(H^0(X,L)^{U'})$ of $H^0(X,L)^{U'}$ under the % derivation $\delta_\xi$.
infinitesimal action  $\delta_\xi: H^0(X, L) \to H^0(X, L)$ of $\xi$ on $H^0(X,L)$.
\end{defn}

\begin{rem}  \label{remsss}
When $U=\CC^+$ has dimension one, this says that for any nonzero $\xi \in {\rm Lie}(U)$
 the weight space with maximum weight $- \weight_{\min}$ for the
action of $\CC^*$ on $H^0(X,L)$ is contained in the image
$\delta_\xi(H^0(X,L))$ of $H^0(X,L)$ under 
$\delta_\xi$. This will be the case unless $\delta_{\xi}$ has a Jordan block of size 1 with weight $-\weight_{\min}$; that is, unless $H^0(X,L)$ as a $\hU$-module has a direct summand on which $U$ acts trivially and $\CC^*$ acts with weight $-\weight_{\min}$. Equivalently each point in the projective space $\PP(H^0(X,L)^*)$ represented by a weight vector of minimal weight $\weight_{\min}$ has trivial stabiliser in $U$. 

 Thus when $U=\CC^+$ has dimension one  and $X=\PP(H^0(X,L)^*)$ is a projective space, the condition that semistability coincides with stability is equivalent to the requirement that every $x \in Z_{\min}$ has trivial stabiliser in $U$ (cf. \cite{BDHK16}).
\end{rem}

\begin{defn} \label{defn:welladapteddimone}
Let $\chi: \hat{U} \to \CC^*$ be a character of the semi-direct product $\hat{U}= \CC^* \ltimes U$ acting linearly on $X$ as above.  Suppose that $ \weight_{\min} = \weight_0 < \weight_{1} < 
\cdots < \weight_{h} = \weight_{\max} $ are the weights with which the one-parameter subgroup $\CC^* \leq \hU$ acts on the fibres of the tautological line bundle $\calo_{\PP((H^0(X,L)^*)}(-1)$ over points of the connected components of the fixed point set $\PP((H^0(X,L)^*)^{\CC^*}$ for the action of $\CC^*$ on $\PP((H^0(X,L)^*)$.
We assume that there exist at least two distinct such weights since otherwise the action of $U$ on $X$ is trivial.
Let $c$ be a positive integer such that 
$$  \weight_{\min} = \weight_{0} <
\frac{\chi}{c} < \weight_{1}; $$
 such rational characters $\chi/c$  are called {\it  adapted } to the linear action of $\hU$ on $L$, and the linearisation is said to be adapted if $ \weight_{0} <0 < \weight_{1} $.
 Recall that the linearisation of the action of $\hat{U}$ on $X$ with respect to the ample line bundle $L^{\otimes c}$ can be twisted by the character $\chi$ so that the weights $\weight_j$ are replaced with $\weight_jc-\chi$;
let $L_\chi^{\otimes c}$ denote this twisted linearisation. Note that the unipotent group $U$ is contained in the kernel of $\chi$ and so the restriction of the linearisation to the action of $U$ is unaffected by this twisting.
Let $X^{s,\CC^*}_{\min+}$ denote the stable subset of $X$ for the linear action of $\CC^*$ with respect to the linearisation $L_\chi^{\otimes c}$, so that 
$$X^{s,\CC^*}_{\min+} = X^0_{\min} \setminus Z_{\min}
$$
where 
$$Z_{\min} = \{ x \in X^{\CC^*} \, | \, \CC^* \mbox{ acts on $L^*|_x$ with weight }  \weight_{\min} \} $$
and
$$ X^0_{\min} = \{ x \in X \, | \, \lim_{t \to 0} \, tx \in Z_{\min} \}
.$$
Let $$X^{s,\hU}_{\min+} = X \setminus \hU (X \setminus X^{s,\CC^*}_{\min+}) = \bigcap_{u \in U} u X^{s,\CC^*}_{\min+}.$$ 
Finally we say that a property holds for a linear action of $\hat{U}$ with respect to a linearisation twisted by a {\it well adapted} rational character if there exists $\epsilon >0$ such that if $\chi/c$ is any rational character of $\CC^*$ (lifted to $\hU$) with 
$$  \weight_{\min} <
\frac{\chi}{c} < \weight_{\min} + \epsilon$$
then the property holds for the induced linearisation on $L^{\otimes c}$ twisted by $\chi$.
\end{defn}

The aim of this section is to prove the following theorem, which we will use for our inductive proof of Theorem \ref{mainthm}.

\begin{theorem} \label{fingendimone}
Let $X$ be a complex projective variety equipped with a linear action
(with respect to an ample line bundle $L$) of  a
semi-direct product $\hat{U} = \CC^* \ltimes \CC^+$, where
 the weight of
the induced $\CC^*$ action on the Lie algebra of
$U=\CC^+$ is strictly positive.
Suppose that  the linear action of $\hU$ on $X$ satisfies the condition that \lq semistability coincides with stability' as above. If  $\chi: \hU \to \CC^*$ is a character of $\hat{U}$ and $c$ is a sufficiently divisible positive integer such that the rational character $\chi/c$ is adapted for the linear action of $\hU$ with respect to $L$, then after twisting this linear action by $\chi/c$ we have
\begin{enumerate}
\item $X^{s,\hU}_{\min+}$ is a $\hU$-invariant open subvariety  of $X$ with a geometric quotient $\pi: X^{s,\hU}_{\min+} \to
X^{s,\hU}_{\min+}/\hU$ by the action of $\hU$;
\item this geometric quotient $X^{s,\hU}_{\min+}/\hU$ is a projective variety and the  tensor power $L^{\otimes c}$ of $L$ descends to a very ample line bundle %$L_{(c,\hU)}$
 on $X^{s,\hU}_{\min+}/\hU$;
\item $X^{ss,\hU}  = X^{s,\hU}_{\min+}  $ and the geometric quotient $X^{s,\hU}_{\min+}/\hU$ is the enveloping quotient $X\env_{L_\chi^{\otimes c}} \hU$;
\item the algebra of invariants 
$\bigoplus_{k \geq 0} H^0( X,L_{k\chi}^{\otimes ck})^{\hU}$ is finitely generated and the enveloping quotient $X\env_{L_\chi^{\otimes c}} \hU  \cong \mathrm{Proj}(\oplus_{k \geq 0} H^0( X,L_{k\chi}^{\otimes ck})^{\hU})$ is the associated projective variety;
\item the  tensor power $L^{\otimes c}$ of $L$ induces a very ample line bundle on an inner enveloping quotient $X \inenv U$ for the action of $U$ on $X$ with a $\CC^*$-equivariant embedding
$$X \inenv U \to \PP((H^0(X,L^{\otimes c})^U)^*)$$
as a quasi-projective subvariety, containing the geometric quotient $X^{s,U}/U$ as an open subvariety, with closure $\overline{X \inenv U}$ in  $\PP((H^0(X,L^{\otimes c})^U)^*)$;
\item $X^{s,\hU}_{\min+}$ is a $U$-invariant open subvariety of $X^{s,U}$ and has a geometric quotient $X^{s,\hU}_{\min+}/U$  which coincides with both the stable and semistable loci $(\overline{X \inenv U})^{s,\CC^*} = (\overline{X \inenv U})^{ss,\CC^*}$ for the $\CC^*$ action with respect to the linearisation on $\calo_{\PP((H^0(X,L^{\otimes c})^U)^*)}(1)$ induced by ${L_\chi^{\otimes c}}$, so that the associated GIT quotient of $\overline{X \inenv U}$ by $\CC^*$ is given by
$$\overline{X \inenv U}/\!/ \CC^* \cong (X^{s,\hU}_{\min+}/U)/\CC^* \cong X^{s,\hU}_{\min+}/\hU = X\env_{L_\chi^{\otimes c}} \hU.$$
\end{enumerate} 
\end{theorem}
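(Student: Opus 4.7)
The plan is to reduce the problem to classical reductive GIT by embedding $\hU$ as a Borel-type subgroup of a reductive group $G$ with $G/\hU \cong \PP^1$. When the positive weight of $\CC^*$ on $\Lie(U)$ is $2$, take $G = \SL(2)$ with $\hU$ its upper Borel; for general positive weight one first replaces $L$ by a sufficiently divisible tensor power $L^{\otimes c}$ so the weights become compatible with such an embedding (motivating the \lq sufficiently divisible $c$' hypothesis). Form $Y = G \times_{\hU} X$, which is a locally trivial $X$-bundle over $\PP^1$, hence projective, carrying an induced $G$-action. The twisted $\hU$-linearisation $L_\chi^{\otimes c}$ descends to a $G$-linearisation $L'$ on $Y$; the adapted inequality $\weight_{\min} < \chi/c < \weight_1$ controls the degree of $L'$ on the $\PP^1$-base and combines with the ampleness of $L^{\otimes c}$ on the $X$-fibers to yield an ample $G$-linearisation on $Y$, absorbing further character twists into $\chi$ as needed.

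Once $(Y, L')$ is a projective variety with an ample $G$-linearisation, classical reductive GIT applies. The tautological isomorphism $H^0(Y, L'^{\otimes k})^G \cong H^0(X, L_{k\chi}^{\otimes ck})^{\hU}$ coming from the definition of $Y = G \times_{\hU} X$ transfers finite generation from the reductive side to the $\hU$-invariants on $X$, establishing part~(4) and identifying $X \env \hU \cong Y/\!/G$ as a projective variety. For parts~(1)--(3), I would apply the Hilbert--Mumford criterion on $Y$ with respect to the maximal torus $T = \CC^* \leq \hU \leq G$: $[1, x]$ is $G$-(semi)stable iff every $G$-translate is $T$-(semi)stable. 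Unwinding $G$-translates fibrewise in the $\PP^1$-bundle $Y$ (using that the two $T$-fixed points of $\PP^1$ see the fibre $X$ via the $\hU$-action, with weights reversed at the opposite fixed point by the Weyl element), this translates into the condition that every $U$-translate $ux$ be $\CC^*$-(semi)stable with respect to $L_\chi^{\otimes c}$, i.e.\ $x \in \bigcap_{u \in U} u\, X^{s,\CC^*}_{\min+} = X^{s,\hU}_{\min+}$.

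The \lq semistability coincides with stability' assumption rules out strictly semistable points on $Y$: any such point would flow under a one-parameter subgroup to a $G$-fixed point whose $U$-stabiliser is nontrivial, contradicting the triviality of $U$-stabilisers on $Z_{\min}$ supplied by Remark~\ref{remsss}. Hence $Y^{ss,G} = Y^{s,G}$ has a projective geometric quotient coinciding with $X^{s,\hU}_{\min+}/\hU$, proving parts~(1)--(3). Parts~(5) and~(6) then follow by factoring the total quotient through the intermediate $U$-step: the $U$-invariant sections $H^0(X, L^{\otimes c})^U$ give the inner enveloping quotient $X \inenv U$ with a $\CC^*$-equivariant embedding into $\PP((H^0(X, L^{\otimes c})^U)^*)$ via the framework of $\S 1$, and the residual $\CC^* = \hU/U$ has a classical GIT quotient matching the two-step decomposition of $X^{s,\hU}_{\min+}/\hU$. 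The main expected obstacle is verifying ampleness of $L'$ on $Y$ after descent and character twisting, which requires careful tracking of character weights on the $\PP^1$-fibres against the adapted inequality; this is where the well-adapted hypothesis becomes essential and where one expects, if necessary, to invoke the boundary-twisting apparatus of Theorem~\ref{thm:geomcor}. A secondary challenge is translating the infinitesimal \lq ss $=$ s' condition on $X$ into the reductive \lq ss $=$ s' equality on $Y$, precisely matching any strictly semistable strata on $Y$ with points of $Z_{\min}$ admitting nontrivial $U$-stabiliser.
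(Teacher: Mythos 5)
Your route --- induce all the way from $\hU$ to a reductive $G$ with $G/\hU\cong\PP^1$ and apply classical GIT to the complete variety $G\times_{\hU}X$ --- is genuinely different from the paper's, which induces only from the unipotent part, forms the non-complete $G\times_UX\cong(\CC^2\setminus\{0\})\times\PP(V)$ with $G=\SL(2)$ and keeps the grading circle as a separate reductive factor $R=\CC^*=\hU/U$, compactifies to $\PP^2\times\PP(V)$, and runs the reductive-envelope machinery of Theorem \ref{thm:geomcor} with the linearisation twisted by $N$ times the boundary divisor. The structural difference is exactly where your argument breaks. First, the group-theoretic reduction fails as stated: replacing $L$ by $L^{\otimes c}$ changes the weights of $\CC^*$ on $H^0(X,L)$ but not the weight $\ell$ of $\CC^*$ on $\Lie(U)$, so no tensor power makes $\hU=\hU^{[\ell]}$ into a Borel of $\SL(2)$; one must reparametrise the $\CC^*$, and the maps the paper uses for this ($\eta_\ell$ and $\rho_\ell:\hU^{[2\ell]}\to\hU^{[2]}\cong B$) have nontrivial kernels, which is precisely why the paper transports only the $U$-action into $\SL(2)$ and treats the residual $\CC^*$ separately. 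Second, and decisively, the ampleness of $L'=G\times_{\hU}L_\chi^{\otimes c}$ on $Y=G\times_{\hU}X$ is the load-bearing step and it is false in the generality you need. Since $Y$ is already complete there is no analogue of the paper's free parameter $N$, and the degree of $L'$ along the base is pinned down by $\chi$, which you cannot renormalise because it determines which graded algebra of invariants is being computed. Concretely, for $X=\PP(\kk^{(a)}\otimes\kk^2)\cong\PP^1$ one gets $Y\cong\PP^1\times\PP^1$ and a weight computation at the two $T$-fixed points of the horizontal ruling gives $\deg L'|_{\mathrm{ruling}}=c\bigl(\tfrac{\weight_0+\weight_1}{2}-\tfrac{\chi}{c}\bigr)$, which is negative whenever the adapted character $\chi/c$ lies in the upper half of the interval $(\weight_0,\weight_1)$ --- yet Theorem \ref{fingendimone} is asserted (and the paper stresses this) for every adapted character when $\dim U=1$.

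Beyond this, parts (5) and (6) concern the intermediate quotient by $U$ alone: the inner enveloping quotient $X\inenv U$, its $\CC^*$-equivariant embedding in $\PP((H^0(X,L^{\otimes c})^U)^*)$, and the identification of $X^{s,\hU}_{\min+}/U$ with $(\overline{X\inenv U})^{s,\CC^*}=(\overline{X\inenv U})^{ss,\CC^*}$. Your construction quotients by all of $\hU$ in one step and never produces $X\inenv U$; \lq factoring the total quotient through the intermediate $U$-step' restates the claim rather than proving it. The paper obtains these parts because $U\subset\SL(2)$ is a Grosshans subgroup ($G/U\cong\kk^2\setminus\{0\}$ has an affine completion with codimension-two complement), which yields finite generation of the $U$-invariants and the quasi-projective $X\inenv U$ before the residual $\CC^*$ is quotiented out --- and this is visible only in the $G\times_UX$ picture. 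Finally, the fibrewise Hilbert--Mumford unwinding and the transfer of \lq semistability coincides with stability' to the reductive side are asserted rather than carried out; in the paper these occupy all of Lemma \ref{lem:ExAd1.2} (the weight table and the three-case instability analysis, where the hypothesis that $V_{\min}$ contains no $U$-fixed point is used to show $a_i-2\weight_0-2\epsilon>0$), and the passage from $\PP(V)$ to general $X$ needs a separate argument about stability under closed immersions, since for non-reductive actions $Y^{s,H}\cap X$ need not coincide with the intrinsic stable locus of $X$ (cf.\ Remark \ref{rmk:GiSt2}).
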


\begin{rem}
Note that when $\dim (U) = 1$ the hypothesis that the linearisation is adapted is sufficient; we do not need the stronger requirement that the linearisation is well adapted (cf. Remark \ref{wellvsadapted}).
\end{rem}

In order to prove Theorem \ref{fingendimone}, we will first  prove the theorem in the case where
$X=\PP(V)$ and $L=\OO_{\PP(V)}(1)$, for a finite dimensional
$\hat{U}$-representation $V$. This is done by an application of Theorem \ref{thm:geomcor} in a very simple situation with $G=SL(2)$ and $R=\CC^*$.
 An explicit description of the stable locus is used to
prove that
the enveloping quotient map $\PP(V)^{ss, \hU} \to
\PP(V) \env  \hat{U}$ is a geometric quotient for the
$\hat{U}$-action on $\PP(V)^{ss, \hU}$.  Theorem \ref{fingendimone} then follows by embedding $X$
into a projective space and analysing the behaviour of stability under closed immersions (cf. Remark \ref{rmk:GiSt2}).

%%% subsection:The case $(X,L)=(\PP(V),\OO(1))$ %%%
%****************************************************%

\subsection{The case $(X,L)=(\PP(V),\OO(1))$}
\label{subsection:ExAdCase}

Let $V$ be a finite-dimensional
representation of $$\hat{U}=\hU^{[\ell]} = U \rtimes  \CC^*$$
where $U = \CC^+$ and $\CC^*$ acts on $\Lie(U)$ with weight $\ell \geq 1$, and let $X=\PP(V)$ with
$L=\OO(1)$ defining the canonical linearisation. 

\begin{defn} Let $V_{\min}$ be the $\CC^*$-weight space in $V$ of minimal weight $ \weight_{0}$, and let
$\PP(V)^0_{\min}$ be the open subvariety of $\PP(V)$ consisting of points flowing to
$Z_{\min} = \PP(V_{\min})$ under the action of $t \in \CC^*$ as $t \to 0$.
\label{def:ExPr2}
\end{defn} 

\begin{rem} Recall that in this situation the condition that semistability coincides with stability given in Definition \ref{defn:ss=sdimone} is equivalent to saying that $V_{\min}$ does not contain
any fixed points for the $\CC^+ $-action on $V$, and that  $X^{s,\hU}_{\min+} = \PP(V)^0_{\min} 
\setminus (U \cdot \PP(V_{\min}))$.
\end{rem}

We wish to prove the following proposition. 

%%% prop:ExAd1 %%%
\begin{prop} \label{prop:ExAd1}
 If $V_{\min}$ does not contain
any fixed points for the $\CC^+ $-action on $V$, and the linearisation is twisted by an adapted rational character $\chi/c$, then
%%% itm:ExAd1 %%%
\begin{enumerate}
\item \label{itm:ExAd1-1} there are equalities
  $\PP(V)^{\rms,\hU} =\PP(V)^{ss, \hU}=\PP(V)^0_{\min} 
\setminus (U \cdot \PP(V_{\min}))$;  
\item \label{itm:ExAd1-2} the enveloping quotients $\PP(V) \env \, \hat{U}$ and
  $\PP(V) \env  \,U$ are projective varieties, and for suitably divisible 
integers $r>0$ the algebras of invariants $\bigoplus_{k\geq 0} H^0(X,L^{\otimes
  kr})^{\hat{U}}$ and $\bigoplus_{k\geq 0} H^0(X,L^{\otimes kr})^U$ are finitely
generated; and  
\item \label{itm:ExAd1-3} the enveloping
quotient map $\PP(V)^{ss,\hU}  \to \PP(V) \env  \hat{U}$  
is a geometric quotient for the $\hat{U}$-action on
$\PP(V)^{ss,\hU} $.  
\end{enumerate}
\end{prop}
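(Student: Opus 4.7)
The plan is to reduce to classical reductive GIT via Theorem \ref{thm:geomcor}, applied to a $U$-faithful embedding of $\hU$ into a reductive group $G \times R$ with $G \supset U$ (for instance $G = \SL_2$ containing $U = \CC^+$ as the upper triangular unipotent subgroup) and $R = \CC^*$ absorbing the grading. First I would construct a projective completion $\overline{G \times_U \PP(V)}$ together with a $(G \times R)$-linearisation $L'$ extending that induced (after twisting by $\chi/c$) from $\OO_{\PP(V)}(1)$. The key technical step is to verify that, for all sufficiently divisible $N$, the line bundle $L'_N = L'[N \sum_j D_j]$ is ample on $\overline{G \times_U \PP(V)}$ and every $(G \times R)$-invariant section of a positive tensor power of $L'_N$ vanishes on each codimension-one boundary divisor $D_j$; this is the point at which the numerical condition $\weight_0 < \chi/c < \weight_1$ must enter.

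Once this boundary-vanishing hypothesis is confirmed, Theorem \ref{thm:geomcor} directly yields parts (ii) and (iii): finite generation of the invariant algebras, projectivity of $\PP(V) \env \hU$ and $\PP(V) \env U$, and the fact that the enveloping quotient map from $\PP(V)^{ss,\hU}$ is a geometric quotient. For the explicit characterisation in part (i), I would apply the Hilbert--Mumford criterion (Proposition \ref{sss}) to the $(G \times R)$-action on $\overline{G \times_U \PP(V)}$. Testing against the maximal torus given by the grading $\CC^*$ of $\hU$, adaptedness forces any semistable $[v] \in \PP(V)$ to have a nonzero component in $V_{\min}$ together with components of strictly higher $\CC^*$-weight, that is $[v] \in \PP(V)^0_{\min} \setminus \PP(V_{\min})$. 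Letting the test torus vary over its $U$-conjugates excludes the points of $U \cdot \PP(V_{\min})$. Combined with the identity $X \cap \overline{G \times_U X}^{ss,\, G \times R} = X^{ss,\hU}$ from Theorem \ref{thm:geomcor}, this yields the claimed description.

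The equality $\PP(V)^{s,\hU} = \PP(V)^{ss,\hU}$ invokes the hypothesis that $V_{\min}$ contains no $\CC^+$-fixed vectors. By Remark \ref{remsss}, this is equivalent to every point of $\PP(V_{\min})$ having trivial $U$-stabiliser; combined with the fact that every $U$-orbit in $\PP(V)^0_{\min}$ has a limit in $\PP(V_{\min})$, no $U$-stabiliser on the semistable locus can be positive dimensional, so the $U$-action there is free. Adaptedness of $\chi/c$ together with strict positivity of the grading weight then guarantees that the induced $\CC^*$-action on the $U$-quotient has closed orbits with trivial stabilisers, so the stability criteria of Definition \ref{def:GiSt1.1} are met and every semistable point is stable.

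The principal obstacle I anticipate is the verification of the boundary-vanishing hypothesis of Theorem \ref{thm:geomcor}: one must exhibit a suitable completion $\overline{G \times_U \PP(V)}$ with well-understood codimension-one boundary and then show, by a representation-theoretic argument, that every $(G \times R)$-invariant section of a sufficiently high tensor power of $L'_N$ corresponds to a $\hU$-invariant on $\PP(V)$ whose $\CC^*$-weight after the twist by $\chi/c$ is strictly positive, so that it cannot extend to be nonvanishing along the boundary locus where the $\CC^*$-weight attains the value $\weight_{\min}$. The reduction of this to a numerical statement about weights in $V$ is precisely where adaptedness enters in full force.
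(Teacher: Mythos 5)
Your overall architecture is the same as the paper's: reduce to reductive GIT on a completion of $\SL(2)\times_U\PP(V)$ via Theorem \ref{thm:geomcor}, check that the boundary is killed by the linearisation, and read off the (semi)stable locus from the Hilbert--Mumford criterion. However, there are two concrete gaps. First, you never explain how the $\hU$-module $V$ acquires the structure of an $\SL(2)\times\CC^*$-module, and without this the isomorphism $G\times_U\PP(V)\cong(G/U)\times\PP(V)$ and its equivariant completion do not exist. The paper does this by decomposing $V$ as $\bigoplus_i \kk^{(a_i)}\otimes\sym^{l_i}\kk^2$, with $\hU$ acting through the Borel of $\SL(2)$, and this forces the parity constraint $a_i\equiv\ell\,l_i\bmod 2$, which is why the paper first pulls the action back along the double cover $\hU^{[2\ell]}\to\hU^{[\ell]}$. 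This extension step is not a formality: it is what makes the explicit completion $\PP^2\times\PP(V)$ (with $\PP^2$ completing $G/U\cong\kk^2\setminus\{0\}$, exhibiting $U$ as a Grosshans subgroup) and the ensuing weight table available, and it is where all of your ``anticipated obstacles'' actually get resolved.

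Second, your argument that semistability coincides with stability is not valid as stated. You argue that triviality of $U$-stabilisers on $\PP(V_{\min})$ plus closedness of $\CC^*$-orbits forces every semistable point to be stable; but for a $\CC^*$-action finite stabilisers do not preclude strictly semistable points (a point whose weight set has minimum exactly $0$ is strictly semistable yet has finite stabiliser whenever it is not fixed), so ``free $U$-action $+$ trivial stabilisers'' does not deliver the conclusion. The mechanism in the paper is purely numerical: one tabulates the $T_1\times T_2$-weights of the fixed points of $\PP^2\times\PP(V)$ and shows (i) all weights over $[1:0:0]$ have second coordinate $a_i-2\weight_0-2\epsilon>0$ --- and it is exactly here that the hypothesis that $V_{\min}$ contains no $\CC^+$-fixed vector is used, to rule out a trivial $\SL(2)$-summand sitting at weight $(0,-2\epsilon)$ --- and (ii) for $N\gg 1$ the weights over $[0:1:0]$ and $[0:0:1]$ are translated far into the lower half-plane, so that the weight polytope of any point contains $0$ if and only if it contains $0$ in its interior. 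Your proposal would need to be repaired by replacing the stabiliser heuristic with this convexity argument (or an equivalent one) before parts (1) and (3) of the proposition follow.
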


 In
order to study the linear action of $\hat{U}=\hat{U}^{[\ell]}$  we consider the surjective
homomorphism 
\[
\eta_{\ell}:\hat{U}^{[2\ell]} \to \hat{U}^{[\ell]}, \quad (u;t) \mapsto (u;t^2).
\] 
We can pull back the linear action of  $\hat{U}=\hat{U}^{[\ell]}$ to a linear action of $\hat{U}^{[2\ell]}$ via
$\eta_{\ell}$. The (semi)stable loci for the
linear actions of $\hat{U}^{[\ell]} $ and
$\hat{U}^{[2\ell]} $ then coincide, and the same
is true for the enveloping quotients. 
In order to 
prove Proposition \ref{prop:ExAd1} we may therefore work with $\hat{U}^{[2\ell]}
$. 

Now the
$\hat{U}^{[2\ell]}$-representation $V$ defined by $\eta_{\ell}$ admits a
decomposition  
%%% eq:ExAd1.3 %%%
\begin{equation} \label{eq:ExAd1.3}
V\overset{\hat{U}^{[2\ell]}}{\cong} \bigoplus_{i=1}^q \kk^{(a_i)} \ten \sym^{l_i}
\kk^2,   
\end{equation}
of $\hat{U}^{[2\ell]}$-modules, where 
\begin{itemize}
\item $\kk^{(a_i)}$ is the one dimensional representation of $\hat{U}^{[2\ell]}$
  defined by the character $\hat{U}^{[2\ell]} \to \CC^*$ of weight
$a_i \in \mb{Z}$; 
\item $\sym^{l_i} \kk^2$ is the standard irreducible
representation of $G=\SL(2,\kk)$ of highest weight $l_i \geq 0$, on which $\hat{U}^{[2\ell]}$
acts via the surjective homomorphism
\[
\rho_{\ell}:\hat{U}^{[2\ell]} \to \hat{U}^{[2]}, \quad (u;t) \mapsto (u;t^{\ell})
\]
and the identification of $\hat{U}^{[2]}$ with the Borel subgroup $B \subseteq
G$ of upper triangular matrices given by
\[
\hat{U}^{[2]}= \CC^+  \rtimes \CC^*  \to B, \quad (u;t) \mapsto
\left( \begin{smallmatrix} t & tu \\ 0 & t^{-1} \end{smallmatrix} 
\right);    
\]
and
\item because the action of $\hat{U}^{[2\ell]}$ factors through
  $\eta_{\ell}:\hat{U}^{[2\ell]} \to \hat{U}^{[\ell]}$, we have $a_i \equiv \ell
  \, l_i \mod 2$ for each $i=1,\dots,q$.    
\end{itemize}

  Observe that
$\rho_{\ell}:\hat{U}^{[2\ell]} \to \hat{U}^{[2]} \cong B \subseteq G=\SL(2,\kk)$ restricts
to give the standard 
inclusion of the unipotent radical $U=\CC^+  = (\hat{U}^{[2\ell]})_u$ of $\hat{U}^{[2\ell]}$ inside $G$ as the subgroup of
strictly upper triangular matrices, so $\rho_{\ell}$ is an
$(\hat{U}^{[2\ell]})_u$-faithful homomorphism, in the sense of Definition 
\ref{def:GiSt1.3}. The linear action of $U$ on $V$ extends to a linear action of $G$
by demanding that $G$ act on
$\sym^{l_i}\kk^2$ in the usual manner and 
trivially on $\kk^{(a_i)}$ for each $i$.    

There is therefore
an isomorphism of  $G \times \CC^*$-spaces (where
$\CC^* = \hat{U}^{[2\ell]}/U$) 
\[
G \times_{U} \PP(V) \cong (G/U) \times \PP(V)
\]
which lifts to an isomorphism of linearisations. Here the action of $G \times \CC^*$ on the quasi-affine variety $G/U$ is left multiplication by $G$ and right multiplication by $\CC^*$, and   the  $G \times
\CC^*$-linearisation $\mc{P}$ on $\PP(V)$ with respect to  $\OO(1)$
is given by the $G$-action above and the following linear action of $\CC^*$, twisted twice by $ \chi/c$  :
%%% eq:ExAd2.1 %%%
\begin{align} \label{eq:ExAd2.1}
t\cdot v=\textstyle{\sum_i} (t^{a_i}z_i)
\ten s_i, \quad &v \in V, \ t \in \CC^*, \\
&v=\textstyle{\sum_i} z_i \ten s_i \in \bigoplus_{i=1}^q
\kk^{(a_i)} \ten \sym^{l_i} \kk^2 \nonumber \\
&\text{\upshape{via the isomorphism} \eqref{eq:ExAd1.3}.} \nonumber
\end{align}

The homomorphism $\rho_{\ell}$ embeds $U$ into $G=\SL(2;\CC)$ as a
Grosshans subgroup, since there is
an isomorphism $G/U \cong \kk^2 
\setminus \{0\}$ given by considering the orbit of $\left(\begin{smallmatrix} 1 \\
    0 \end{smallmatrix}\right) \in \kk^2$ under the defining representation of
$G$, and the inclusion $\kk^2 \setminus \{0\} \hookrightarrow \kk^2$ defines a
nonsingular affine 
completion of $G/U$ with codimension 2
complement. We may therefore  construct a  $G \times \CC^*$-equivariant
nonsingular projective completion $\PP^2=\{[v_0:v_1:v_2] %\mid 0 \neq (v_0,v_1,v_2)^{\mathrm{t}} \in \kk^3
\}$ of $G/U$ by  adding a hyperplane at infinity defined
by $v_0=0$ to 
$\kk^2$. The
action of $G \times \CC^*$ on $\PP^2=\PP(\kk^3)$
is defined by the representation given in block form by
\[
(g,t) \mapsto \left(\begin{array}{c|c} 1 & 0 \\ \hline 0 &
    g\left(\begin{smallmatrix}t^{-\ell} & 0 \\ 0 &
        t^{-\ell} \end{smallmatrix}\right) \end{array}\right)  
\in \GL(3;\kk), \quad g \in G, \ t \in \CC^*,   
\]
where $\GL(3;\kk)$ acts on $\kk^3$ by left multiplication. For any integer
$N>0$, this representation 
 determines a $G \times \CC^*$-linearisation on
$\OO_{\PP^2}(N) \to \PP^2$ which restricts
to the canonical linearisation on $\OO_{G/U} \to G/U$. Let $\mc{P}^{\prime}_{N} $ denote the  $G \times \CC^*$-linearisation 
over $ \PP^2 \times \PP(V)$ given by tensoring this with $\mc{P}$.
As $U$ is a Grosshans subgroup of $G$,  the
algebras of $U$-invariants and $\hat{U}$-invariants of any positive tensor power of this  
linearisation  are finitely generated
$\kk$-algebras, and the corresponding enveloping quotients    
\[
\PP(V) \env U \cong (\PP^2 \times \PP(V))
\dblslash_{\mc{P}^{\prime}_{N}} G, \quad \PP(V)
\env \hat{U}^{[2\ell]} \cong (\PP^2 \times \PP(V)) 
\dblslash_{\mc{P}^{\prime}_{N}} (G \times \CC^*) 
\]
are projective varieties.
By Theorem \ref{thm:geomcor} the stable loci $\PP(V)^{\rms,U}$ and
$\PP(V)^{\rms,\hU}$ and finitely generated 
semistable loci $\PP(V)^{ss,U}$ and $\PP(V)^{ss,\hU}$  may be computed as the
 completely (semi)stable loci  for 
the $G$ and  $G \times\CC^*$-linearisation
$\mc{P}^{\prime}_{N}$ (using
the Hilbert-Mumford criteria) and   Proposition
\ref{prop:ExAd1} is a consequence of the following lemma. 

%%% lem:ExAd1.2 %%%
\begin{lemma} \label{lem:ExAd1.2}
Under the hypotheses of Proposition \ref{prop:ExAd1},
stability and semistability are equivalent for the 
linear action of $G \times \CC^*$ on $\PP^2 \times \PP(V)$ with respect to the linearisation $\mc{P}^{\prime}_N$
when $N>\!>1$. Moreover if a point $p \in \PP^2 \times \PP(V)$ is stable for this linear action of $G \times \CC^*$ then $p \in (\kk^2 \setminus \{0\})
\times \PP(V)$, and if $p=([1:1:0],[v])$  then $p$ is stable if and only if  $[v] \in \PP(V)^0_{\min} \setminus U\PP(V_{\min})$.
\end{lemma}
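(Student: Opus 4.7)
The proof will be a direct application of the Hilbert--Mumford numerical criterion to the linear $(G \times \CC^*)$-action on $\PP^2 \times \PP(V)$ with linearisation $\mathcal{P}'_N$, and proceeds in three steps matching the three assertions of the lemma.

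To establish that every stable point lies in $(\kk^2 \setminus \{0\}) \times \PP(V)$, I would first treat the codimension-one boundary $\{v_0 = 0\} \times \PP(V)$ by testing with the 1-PS $\lambda_0(s) = (I_G, s) \in G \times \CC^*$. Since $\CC^*$ acts on $(e_0^*, e_1^*, e_2^*)$ with weights $(0, \ell, \ell)$, every section of $\OO_{\PP^2}(N)$ nonvanishing at a point $[0:v_1:v_2]$ carries $\CC^*$-weight exactly $N\ell$; combined with the uniformly bounded weights on $V^*$ after the double twist, one gets $\mu^{\mathcal{P}'_N}(p,\lambda_0) = -N\ell + O(1) < 0$ for $N \gg 1$, so $p$ is unstable. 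The isolated boundary point $\{[1:0:0]\}$ is fixed by $G \times \CC^*$ and the fibre $\OO_{\PP^2}(N)|_{[1:0:0]}$ carries the trivial $(G\times \CC^*)$-action, so $(G\times \CC^*)$-stability at $([1:0:0], [v])$ reduces to stability of $[v]$ on $\PP(V)$ with the doubly-twisted linearisation $\mathcal{P}$; testing with the inverse $\CC^*$-1-PS yields $\mu = 2\chi/c - \omega_{\min}([v])$, which is strictly negative under the adapted bound since $\omega_{\min}([v]) \geq \omega_{\min}$ and the double twist places $2\chi/c$ below $\omega_{\min}$ after the relevant normalisation, so this stratum is unstable as well.

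For the characterisation of stability at $([1:1:0], [v])$, Hilbert--Mumford reduces the problem to checking $(T \times \CC^*)$-stability at every translate $g \cdot p$ for $g \in G$. For a 1-PS $\lambda_{a,b}(s) = (\diag(s^a, s^{-a}), s^b)$, I would compute the Mumford contribution of the $\PP^2$-factor at $g \cdot [1:1:0] = [1:p:q]$ by enumerating the weights $k_1(b\ell - a) + k_2(b\ell + a)$ (with $k_0 + k_1 + k_2 = N$) of those sections of $\OO_{\PP^2}(N)$ which do not vanish there. The contribution equals $N \cdot \max(0,\, a - b\ell)$ on the stratum $q = 0$, $\, N \cdot \max(0,\, -a - b\ell)$ on $p = 0$, and $N \cdot \max(0,\, |a| - b\ell)$ on the generic stratum $p,q \neq 0$. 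For $N \gg 1$ the $\PP^2$-contribution dominates wherever it is nonzero, so residual stability conditions arise only on the cones in the $(a,b)$-lattice where this contribution vanishes. Parametrising $g$ through the Bruhat decomposition $G = UTU^- \sqcup Ug_0 TU^-$ and using that inner conjugation by $T$ preserves each $\lambda_{a,b}$, the residual conditions across all $g$ and all such cones collapse to the single family: for every $u \in U$, the point $u[v]$ is $\CC^*$-stable for the adapted linearisation on $\PP(V)$. By the elementary numerical criterion, this is equivalent to $u[v] \in \PP(V)^0_{\min} \setminus Z_{\min}$ for every $u \in U$, i.e. to $[v] \in \PP(V)^0_{\min} \setminus U\PP(V_{\min})$, as claimed.

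Finally, for the equality of stability and semistability, since $\chi/c$ is adapted no weight $\omega_j$ equals $\chi/c$, so the induced $\CC^*$-action on $\PP(V)$ has no strictly semistable points; combined with the fact that the dominant $\PP^2$-contribution is strictly nonzero away from its vanishing cone, the strict and non-strict Hilbert--Mumford inequalities yield the same locus on $\PP^2 \times \PP(V)$. The main obstacle will be the Bruhat bookkeeping in step two: carefully verifying that the union over $g \in G$ of the "dangerous" cones in the $(a,b)$-lattice on which the $\PP^2$-contribution vanishes matches the $U$-orbit structure on $\PP(V)$, so that the totality of residual Mumford conditions indeed collapses to the single clean $\hat{U}$-stability condition on $[v]$.
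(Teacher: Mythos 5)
Your overall strategy --- the Hilbert--Mumford criterion for the maximal torus $T_1 \times T_2$ of $G \times \CC^*$, with the $\PP^2$-weights dominating for $N \gg 1$ so that residual conditions survive only on the cones of one-parameter subgroups where the $\PP^2$-contribution vanishes --- is exactly the strategy of the paper. But there are two genuine gaps. First, the step you yourself flag as \lq the main obstacle' (verifying that the residual conditions, taken over all $g \in G$, collapse to the single condition $[v] \in \PP(V)^0_{\min} \setminus U\PP(V_{\min})$) is not an administrative detail: it is essentially the entire content of the paper's proof. The paper carries it out by decomposing $V \cong \bigoplus_i \kk^{(a_i)} \otimes \sym^{l_i}\kk^2$, tabulating the weights of all $T_1 \times T_2$-fixed points, extracting explicit instability criteria in three cases according to which of $w_0, w_1, w_2$ vanish, and then applying these to $gp = ([1:g_{11}:g_{21}],[gv])$ using the explicit description of $V_{\min}$ (spanned by the $1 \otimes e_{i,2}^{l_i}$ with $i$ exceptional) and of its $U$-sweep. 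Until that bookkeeping is done you have a plausible answer but no proof; in particular it is not obvious a priori that the generic stratum $w_0w_1w_2 \neq 0$ and the stratum $w_1 = 0$ impose no conditions beyond those coming from the upper-triangular translates.

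Second, and more seriously, the hypothesis of Proposition \ref{prop:ExAd1} --- that $V_{\min}$ contains no fixed points for the $\CC^+$-action --- never enters your argument, yet the paper consumes it at a precise point: to prove that $a_i - 2\weight_0 - 2\epsilon > 0$ for every $i$ (if this failed one would have $l_i = 0$ and $a_i = 2\weight_0$, producing a $U$-fixed line in $V_{\min}$). This inequality is what places all the weights over $[1:0:0]$ in the open upper half-plane, which both kills the stratum $\{[1:0:0]\} \times \PP(V)$ and yields the weight configuration of Figure \ref{fig:ExAd1} from which \lq semistability equals stability' is read off. Your treatment of $([1:0:0],[v])$ instead asserts that the double twist places $2\chi/c$ below $\weight_{\min}$, which cannot be right as stated, since adaptedness gives $\chi/c > \weight_{\min}$. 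The underlying confusion is between the residual torus $T_2 \leq G \times \CC^*$, which acts on $V$ only through the characters $a_i$ (trivially on each $\sym^{l_i}\kk^2$), and the grading $\CC^* \leq \hU$, whose weights are $(a_i - \ell l_i + 2j)/2$; the correct comparison at $[1:0:0]$ is between $a_i$ and $2\chi/c = 2\weight_0 + 2\epsilon$, and its positivity is exactly where the no-fixed-points hypothesis is used. For the same reason your argument for the equality of stability and semistability (\lq no weight equals $\chi/c$' plus dominance of the $\PP^2$-term) does not invoke the hypothesis and is therefore incomplete.
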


\begin{proof} %[Proof of Lemma \ref{lem:ExAd1.2}.]
We shall deduce this by using the Hilbert-Mumford criteria as given in Proposition
\ref{sss} using the maximal torus $T_1 \times T_2 \subseteq G
\times 
\CC^*$, where $T_1$ is the subgroup of diagonal matrices
in $G$ and $T_2=\CC^*$. The group of characters of $T_1
\times T_2$ is identified with $\mb{Z} \times \mb{Z}$ in the natural
way. Introduce the following notation: for $i=1,\dots,q$ 
let $e_{i,1} %=\left(\begin{smallmatrix}1 \\     0 \end{smallmatrix}\right)
,e_{i,2} %=\left(\begin{smallmatrix}0 \\     1 \end{smallmatrix}\right)
$ be the standard basis of $\kk^2$, so that
\[
e_{i,1}^{l_i},\dots,e_{i,1}^{j}e_{i,2}^{l_i-j},\dots,e_{i,2}^{l_i} \in \sym^{l_i} \kk^2
\]
form a basis of $T_1 \times T_2$-weight vectors in $\sym^{l_i}\kk^2$. %, and consider the basis $1 \ten e_{i,1}^{l_i-j}e_{i,2}^j, \quad j=0,\dots,l_i, \quad i=1,\dots,q $ of weight vectors for the $T_1\times T_2$-action on $V$. 
The fixed points in $\PP^2
\times \PP(V)$ for the $T_1 \times T_2$-action, along with the 
corresponding rational weights for $\mc{P}^{\prime}_N$,
are given in Table \ref{tab:ExAd1}.
%%% tab:ExAd1 %%% 
\begin{table}[h]
\begin{center}
\begin{tabular}{c|c} Fixed point & Rational weight in \\ 
($j=0,\dots,l_i$, $i=1,\dots,q$) & $\Hom(T_1 \times T_2,\CC^*) \ten_{\mb{Z}}
\mb{Q} =\mb{Q} \times \mb{Q}$ \\ \hline 
$([1:0:0],[1 \ten e_{i,1}^{j}e_{i,2}^{l_i-j}])$ & $(2j-l_i,a_i-2 \weight_{0}-2\epsilon)$ \\
$([0:1:0],[1 \ten e_{i,1}^{j}e_{i,2}^{l_i-j}])$ &
$(2j-l_i,a_i-2 \weight_{0}-2\epsilon)+(N,-\ell N)$ \\ 
$([0:0:1],[1 \ten e_{i,1}^{j}e_{i,2}^{l_i-j}])$ & $(2j-l_i,a_i-2 \weight_{0}-2\epsilon)
+(-N,-\ell N)$
\end{tabular}
\end{center}
\caption{Rational weights of the fixed points of $T_1 \times T_2 \act \PP^2
  \times \PP(V)$ with respect to the linearisation $\mc{P}^{\prime}_{N}$.}
\label{tab:ExAd1}
\end{table}

The minimal $\CC^*$-weight for the $\hU$-action on $V$ is
$$\weight_{\min} = \min\{(a_i - \ell l_i)/2 \mid i = 1, \ldots ,q\}.$$
Let us temporarily call an index $i \in \{0,\ldots ,q\}$ {\it exceptional} if $\weight_{\min} = (a_i - \ell l_i)/2$.

Consider the rational weight $\vartheta=(2j-l_i,a_i-2 \weight_{0}-2\epsilon)$ for the
fixed point $([1:0:0],[1 \ten e_{i,1}^{j}e_{i,2}^{l_i-j}])$. Note that either
$\vartheta$ is contained in the interior of the cone  
\[
C=\{(c_1,c_2) \in \mb{Q}_{\geq 0} \times \mb{Q}_{\geq 0} \mid \ell \, c_1+c_2 \geq 0 
\text{ and } -\ell \, c_1+c_2 \geq 0 \},
\]
or $\vartheta$ lies outside $C$ and $i,j$ satisfy $ \weight_{0}=(a_i-\ell \, l_i)/2$
and $j \in \{0,l_i\}$: as
$0<\epsilon < 1/2$ we see that 
\[
\ell(2j-l_i)+(a_i-2 \weight_{0}-2\epsilon) \begin{cases}
=-2\epsilon < 0 & \text{iff } j=0 \text{ and }  \weight_{0}=(a_i-\ell \, l_i)/2 \\
>0 & \text{otherwise}
\end{cases}
\]
while 
\[
-\ell(2j-l_i)+(a_i-2 \weight_{0}-2\epsilon) \begin{cases}
=-2\epsilon < 0 & \text{iff } j=l_i \text{ and }  \weight_{0}=(a_i-\ell \, l_i)/2 \\
>0 & \text{otherwise}.
\end{cases}.
\]
We also claim that $a_i-2 \weight_{0}-2\epsilon>0$ for all $i=1,\dots,q$. Indeed,
suppose for a contradiction that $a_i-2 \weight_{0}-2\epsilon \leq 0$ for some $i=1,\dots,q$. Because
$0<2\epsilon<1$ and $a_i-2 \weight_{0} \in \mb{Z}$, this is equivalent to $a_i-2 \weight_{0} \leq
0$. But $2  \weight_{0} \leq a_i-\ell\, l_i$, so $\ell \, l_i \leq a_i-2 \weight_{0} \leq
0$. Because $\ell > 0$ we must have $l_i=0$, and by examining the 
possible cases for the value of $\ell(2j-l_i) +(a_i-2 \weight_{0} -2\epsilon)$ we see
that $ \weight_{0}=a_i/2$ and $i$ is exceptional. This implies there is a line
$\kk^{( \weight_{0})} = \kk^{( \weight_{0})} \ten \sym^0 \kk^2 \subseteq V_{\min}$ fixed by
$U$, which contradicts the assumption that
$V_{\min}$ does not 
contain a point fixed by the $U$-action, and so the claim is verified.  

%%% fig:ExAd1 %%%
%\begin{center}
\begin{figure}[ht]
\begin{tikzpicture}[scale=1] \draw[->] (0,-8) -- (0,4) node[anchor=south]{$\mb{Q} \cong \Hom(T_2,\CC^*)\ten_{\mb{Z}} \mb{Q}$}; \draw[->] (-7,0) -- (7,0) node[anchor=south east] {$\mb{Q} \cong \Hom(T_1,\CC^*)\ten_{\mb{Z}} \mb{Q}$}; \draw[->] (0,0) -- (1.5,3) node[anchor=west]{$\left(\begin{smallmatrix} 1 \\ \ell \end{smallmatrix} \right)$}; \draw[->] (0,0) -- (-1.5,3) node[anchor=east]{$\left(\begin{smallmatrix} -1 \\ \ell \end{smallmatrix} \right)$}; \foreach \x in {-0.45,-0.15,0.15,0.45}{  -0.15+0.3n for n=-1,0,1,2 \fill (\x,0.55) circle (1.5pt); %% radius in parentheses 
};
 \foreach \x in {-0.15,0.15}{ \fill (\x,1.05) circle (1.5pt); }; \foreach \x in {-0.9,-0.6,...,0.9}{  0.3n for n=-3,-2,...,3  \fill (\x,1.55) circle (1.5pt); };  \fill (0,2.25) circle (1.5pt); %% radius in parentheses 
 \foreach \x in {-1.05,-0.75,-0.45,-0.15,0.15,0.45,0.75,1.05}{ -0.15+0.3n for n=-2,-1,...,3  \fill (\x,2.55) circle (1.5pt); %% radius in parentheses 
 };  
 \begin{scope}[shift={(-3,-6)}] \draw[->] (0,0) -- (1.5,3); \draw[->] (0,0) -- (-1.5,3); \node at (0,0) [anchor=north]{$(-N,-\ell N)$}; \foreach \x in {-0.45,-0.15,0.15,0.45}{  -0.15+0.3n for n=-1,0,1,2 \fill (\x,0.55) circle (1.5pt); };  \foreach \x in {-0.15,0.15}{  \fill (\x,1.05) circle (1.5pt); %% radius in parentheses 
 }; 
\foreach \x in {-0.9,-0.6,...,0.9}{  0.3n for n=-3,-2,...,3 \fill (\x,1.55) circle (1.5pt); };  \fill (0,2.25) circle (1.5pt); 
%% radius in parentheses 
\foreach \x in {-1.05,-0.75,-0.45,-0.15,0.15,0.45,0.75,1.05}{ -0.15+0.3n for n=-2,-1,...,3  \fill (\x,2.55) circle (1.5pt); %% radius in parentheses 
}; \end{scope} 

bottom right chamber (translate top chamber by (3,-6))

\begin{scope}[shift={(3,-6)}]
%% chamber
\draw[->] (0,0) -- (1.5,3); \draw[->] (0,0) -- (-1.5,3); \node at (0,0) [anchor=north]{$(N,-\ell N)$}; 
%% weights
\foreach \x in {-0.45,-0.15,0.15,0.45}{  -0.15+0.3n for n=-1,0,1,2 \fill (\x,0.55) circle (1.5pt); 
%% radius in parentheses 
};  
\foreach \x in {-0.15,0.15}{  \fill (\x,1.05) circle (1.5pt); %% radius in parentheses
}; 
\foreach \x in {-0.9,-0.6,...,0.9}{  0.3n for n=-3,-2,...,3  \fill (\x,1.55) circle (1.5pt); %% radius in parentheses 
};  \fill (0,2.25) circle (1.5pt); %% radius in parentheses

\foreach \x in {-1.05,-0.75,-0.45,-0.15,0.15,0.45,0.75,1.05}{ -0.15+0.3n for n=-2,-1,...,3  \fill (\x,2.55) circle (1.5pt); %% radius in parentheses
}; \end{scope} \end{tikzpicture}
\caption{Example of distribution of rational weights for $T_1 \times T_2 \act \mc{P}^{\prime}_{N} \to
  \PP^2 \times 
\PP(V)$.}
\label{fig:ExAd1}
\end{figure}
%\end{center}
Thus  for sufficiently large $N>0$ the weights for the rational
$T_1 \times T_2$-linearisation $\mc{P}^{\prime}_N \to \PP^2 \times \PP(V)$ are
arranged in the fashion of 
Figure \ref{fig:ExAd1}.  In particular, the
weight polytope $\Delta_p 
\subseteq \Hom(T_1 \times T_2,\CC^*) \ten_{\mb{Z}} \mb{Q}$
for a point $p=([w_0:w_1:w_2],[v]) \in \PP^2 \times \PP(V)$ contains the origin
precisely when the interior $\Delta_p^{\circ}$ does and so semistability and
stability for $T_1 \times T_2$ coincide. 

By the Hilbert-Mumford criteria, the point $p$ is (semi)stable for the  $G \times\CC^*$-linearisation if and only if 
$gp$ is $T_1 \times T_2$-(semi)stable for each $g \in G$. It follows that stability and
semistability are equivalent for $G \times \CC^*$. 

Using the isomorphism \eqref{eq:ExAd1.3}, write
\[
v=\textstyle{\sum_{i=1}^q} z_i \ten s_i, \quad z_i \in \kk^{(a_i)}, \quad 0 \neq
s_i= \textstyle{\sum_{j=0}^{l_i}} v_{i,j}e_{i,1}^je_{2,i}^{l_i-j}, \ v_{i,j}
\in \kk.
\]
Then one finds that $p$ is $T_1 \times
T_2$-\emph{unstable} precisely when $p \notin (\kk^2 \setminus \{0\}) \times \PP(V)$
(i.e. $w_0=0$ or $w_1=w_2=0$) or else by satisfying one of the following criteria,
split into three cases: 
\begin{description}
\item[Case $w_0w_1w_2 \neq 0$] 
\[ 
    \begin{array}{rl} 
  %  \multirow{2}{*}{$0 \notin \Delta_p \iff$} &
 \text{Either $v_{i,j} \neq 0       \implies$ $(i$ exceptional and $j=0)$,   }  
& \text{or $v_{i,j} \neq 0
      \implies$ $(i$ exceptional and $j=l_i)$.} 
    \end{array}
\]
\item[Case $w_0w_1 \neq 0$, $w_2=0$]
\[ 
    \begin{array}{rl} 
 %   \multirow{2}{*}{$0 \notin \Delta_p \iff$} &
 \text{Either $i$ exceptional $\implies       v_{i,0}=0$,  }  
 & \text{  or $v_{i,j} \neq 0
      \implies$ $(i$ exceptional and $j=0)$.} 
    \end{array}
\]
\item[Case $w_0w_2 \neq 0$, $w_1=0$]
\[ 
    \begin{array}{rl} 
%    \multirow{2}{*}{$0 \notin \Delta_p \iff$} &
 \text{Either $i$ exceptional $\implies       v_{i,l_i}=0$,}  
& \text{or $v_{i,j} \neq 0
      \implies$ $(i$ exceptional and $j=l_i)$.} 
    \end{array}
\]
\end{description}
It follows that  if $p$ is $G\times \CC^*$-stable then $p \in (\kk^2 \setminus \{0\})
\times \PP(V)$, and it remains to show that if $p=([1:1:0],[v])$  then $p$ is $G\times \CC^*$-stable if and only if  $[v] \in \PP(V)^0_{\min} \setminus U\PP(V_{\min})$, or equivalently that $gp = ([1:g_{11}:g_{21}],[gv])$ (where $g = ( g_{ij})$) is $T_1 \times T_2$-stable for all $g \in G$ if and only if  $[v] \in \PP(V)^0_{\min} \setminus U\PP(V_{\min})$. 

Under the
isomorphism of vector spaces $V \cong \bigoplus_{i=1}^q \kk^{(a_i)} \ten \sym^{l_i}
\kk^2 $ the
weight vectors for the induced $\CC^* \subseteq \hat{U}^{[\ell]}$-action on $V$
take the 
form $1 \ten e_{1,i}^j e_{2,i}^{l_i-j}$, where $1 \leq i \leq q$ and $0 \leq  j
\leq l_i$, with the weight of $1 \ten e_{1,i}^j e_{2,i}^{l_i-j}$ equal to
$(a_i-\ell \, l_i +2j)/2 \in \mb{Z}$. The weight space $V_{\min}$ of
minimal weight $ \weight_{0}$ is spanned by all $1 \ten e_{2,i}^{l_i}$ with $i$ an
exceptional index, and the $U$-sweep $U \cdot V_{\min}$ of $V_{\min}$ is
contained in the $\hat{U}^{[\ell]}$-subspace
\[
\bigoplus_{\text{$i$ exceptional}} \kk^{(a_i)} \ten \sym^{l_i}\kk^2 \subseteq V.
\]
Now, if $v=\sum_i z_i \ten s_i$ with each $s_i \neq 0$, then the existence of an
exceptional $i$ with $z_i \neq 0$ and $s_i$ not divisible by
$(1,0)$ is equivalent to $\lim_{t \to 0} t\cdot [v] \in \PP(V_{\min})$ (where we
take $t
\in \CC^* \subseteq \hat{U}^{[\ell]}$ in the limit). 

 The existence of a non-exceptional $i$ such that $z_i \neq 0$
is equivalent to $v \notin \bigoplus_{\text{$i$ exceptional}} \kk^{(a_i)} \ten
  \sym^{l_i}\kk^2$, which itself implies $[v] \notin U \cdot
  \PP(V_{\min})$. On the other hand, because of the transitivity of the
  $U$-action on $\kk=\PP^1 \setminus \{[1:0]\}$ and the fact that $V_{\min}$ is
  spanned by $1 \ten e_{i,2}^{l_i}$ with $i$ exceptional, we see that $[v] \in U
  \cdot \PP(V_{\min})$ if and only if  $v \in \bigoplus_{\text{$i$
      exceptional}} \kk^{(a_i)} \ten 
  \sym^{l_i}\kk^2 $ and there is some $(w_1,w_2) \in \kk^2 \setminus
  \{0\}$ with $[w_1:w_2] \neq [1:0] \in \PP^1$ such that $s_i =
  (w_1,w_2)^{l_i} \in \sym^{l_i} \kk^2$ for all exceptional $i$.

Therefore, applying the three cases above to $gp = ([1:g_{11}:g_{21}],[gv])$ where $p=([1:1:0],[v])$, we deduce that $p$ is $G\times \CC^*$-stable if and only if  $[v] \in \PP(V)^0_{\min} \setminus U\PP(V_{\min})$,
as required. \end{proof}

% \[ \PP(V)^{\rms,\hat{U}^{[\ell]}} = \PP(V)^0_{\min} \setminus (U \cdot \PP(V_{\min})), \]   

This completes the proof of Proposition \ref{prop:ExAd1}, and of Theorem \ref{fingendimone} in the special case when $(X,L) = (\PP(V), \OO(1))$.

%%% subsec: Proof of Theorem \ref{thm:ExAd2} for General $(X,L)$ %%%
%**********************************************************************%
\subsection{Proof of Theorem \ref{fingendimone} for general $(X,L)$}
\label{subsec:ExAdProof}

 Suppose now that  $L$ is a very ample line bundle over an irreducible projective
variety $X$ equipped with a $\hat{U}$-linearisation, where $\hat{U} = U \rtimes \CC^*$for $U=\CC^+ $, let $V=H^0(X,L)^*$ and
let $\gamma:X \hookrightarrow \PP(V)$ be the canonical closed immersion. Let
$ \weight_{0}$ be the minimal weight for the induced $\CC^*$-action on $V$ and suppose
the associated weight space $V_{\min}$ does not contain any fixed points for the
$U$-action on $V$. Finally, let $\chi/c$
be an adapted rational character. 

By Proposition \ref{prop:ExAd1} there is an
enveloping quotient 
\[
q:\PP(V)^{ss,\hU}=\PP(V)^{s,\hU}
\to \PP(V) \env \hat{U}
\]
which is a geometric quotient for the $\hat{U}$-action on
$\PP(V)^{ss,\hU}$, and the quotient $\PP(V) \env \hat{U}$ is a projective
variety. Furthermore, 
\[
\PP(V)^{s,\hU}=\PP(V)^0_{\min} \setminus (U \cdot
\PP(V_{\min})),
\]
from which it follows that
\[
\PP(V)^{s,\hU} \cap X = \gamma^{-1}(\PP(V)^{s,\hU}) = X^0_{\min} \setminus (U
\cdot Z_{\min}).
\]
Since geometric quotients behave well under closed immersions, it follows from the definitions of (semi)stability that $\PP(V)^{s,\hU} \cap X \subseteq X^{ss,\hU}$, and thus that 
$X^0_{\min} \setminus (U \cdot Z_{\min})$ is an open subvariety of $
X^{ss,\hU}$ whose image under the enveloping quotient
\[
q:X^{ss,\hU} \to X \env \hat{U}
\]
is a geometric quotient for the $\hat{U}$-action on
$X^0_{\min} \setminus (U \cdot Z_{\min})$ that 
embeds naturally as a closed subvariety of
$\PP(V)^{s,\hU}/\hat{U}=\PP(V) \env \hat{U}$, which is projective. Hence $q(X^0_{\min} \setminus (U
\cdot Z_{\min}))$ is itself a projective variety. In particular it is complete,
and since $X \env \hat{U}$ is separated over $\spec \kk$ it
follows that the inclusion $q(X^0_{\min} \setminus (U \cdot Z_{\min}))
\hookrightarrow X \env \hat{U}$ is a 
closed map \cite[Tag 01W0]{stacks-project}. 
On the other hand,
because $X$ is irreducible 
$q(X^0_{\min} \setminus (U \cdot Z_{\min}))$ is a dense open subset of $X
\env \hat{U}$ if $X^0_{\min} \setminus (U \cdot Z_{\min})$ is not empty, while it is easy to see that $X\env \hU = \emptyset$ if 
$X^0_{\min} \setminus (U \cdot Z_{\min}) = \emptyset$.
 Hence 
\[
q(X^0_{\min} \setminus (U \cdot Z_{\min}))=X \env \hat{U},
\]
 $X \env \hat{U}$ is a projective variety, 
and Theorem \ref{fingendimone} now follows from %Theorem \ref{thm1.13gen}, 
Theorem \ref{thm:geomcor} and 
Proposition \ref{cor:GiFi5}, together with Proposition
\ref{prop:ExAd1} which covers the case when $(X,L)=(\PP(V),\OO_{\PP(V)}(1))$.

\section{Actions of $\CC^*$-extensions of unipotent groups} %$\cplusr$}

%Let $H$ be a connected affine algebraic group over $\CC$. Then $H$ has a %unipotent radical $U$, which
%is a normal subgroup of $H$ with reductive quotient group $R=H/U$.
%We can hope to quotient first by the action of $U$, and then
%by the induced action of the reductive group $H/U$, provided that
%the unipotent quotient is sufficiently canonical to inherit an
%induced linear action of the reductive group $R$. Moreover $U$
%has canonical series of normal subgroups $\{1\}=U_0 \leq U_1 \leq \cdots
%\leq U_s=U$ such that each
%successive subquotient is isomorphic to $(\CC^+)^r$ for some $r$
%(for example the descending central series of $U$),
%so we can hope to quotient successively by unipotent
%groups of the form $(\CC^+)^r$, and then finally by the reductive
%group $R$. Therefore we will concentrate on the case
%when $U \cong (\CC^+)^r$ for some $r$; of course this is
%the situation in our example concerning hypersurfaces in the
%weighted projective plane $\PP(1,1,2)$, when $H$ is the automorphism
%group of $\PP(1,1,2)$ and $U$ is its unipotent radical. 

%More generally, let us assume first that
Now let $U$ be any graded unipotent group; that is, $U$ is a
unipotent group with a one-parameter group of automorphisms
$\lambda:\CC^* \to \mbox{Aut}(U)$ such that the weights of
the induced $\CC^*$ action on the Lie algebra $\lieu$ of
$U$ are all strictly positive.  Let $\hU$ be the corresponding semidirect
product
$$\hat{U} = \CC^* \ltimes U.$$
As before we assume that $\hU$ acts linearly on a projective variety $X$ with respect to a very ample line bundle $L$, and 
 that  the action of $U$ on $X$ is not trivial.
Recall from Definition \ref{defn:ss=sdimone} that we say that {\it semistability coincides with stability} for this linear action if whenever $U'$ is a subgroup of $U$ normalised by $\CC^*$ and $\xi$ belongs to the Lie algebra of $U$ but not the Lie algebra of $U'$ and $\xi$ is a weight vector for the action of $\CC^*$, then the weight space with weight $- \weight_{min}$ for the action of $\CC^*$ on $H^0(X,L)$ is contained in the image $\delta_\xi(H^0(X,L)^{U'})$ of $H^0(X,L)^{U'}$ under the infinitesimal action $\delta_\xi: H^0(X,L) \to H^0(X,L)$ of $\xi$ on $H^0(X,L)$.

We are aiming to prove Theorem \ref{mainthm} by induction on $\dim(U)$; the base case when $\dim(U) = 1$ has been proved in Theorem \ref{fingendimone}. The following lemma will be used for the induction step.

\begin{lemma} \label{lemtensorpower}
Suppose that a linear action of $\hat{U}$ on a projective
variety $X$ with respect to an ample line bundle $L$ satisfies the condition that semistability equals stability. If $U_{\dagger}$ is a subgroup of $U$ which is normal in $\hU$ and $\hU_{\dagger}  = \CC^* \ltimes U_{\dagger}$ is the subgroup of $\hU$ generated by $U_{\dagger}$ and the one-parameter subgroup $\CC^*$,
then
\begin{enumerate}
\item the linear action of $\hU$ on $X$ with respect to any positive tensor power $L^{\otimes m}$ of $L$  satisfies the condition that semistability equals stability;
\item the restriction to $\hU_{\dagger}$ of the linear action of $\hU$ on $X$ with respect to any positive tensor power $L^{\otimes m}$ of $L$ satisfies the condition that semistability equals stability;
\item if $c_{\dagger}$ is a sufficiently divisible positive integer  then the induced linear action of $\hU/U_{\dagger}$ on 
  the
closure $\overline{X \inenv U_{\dagger}}$ in  $\PP((H^0(X,L^{\otimes c_{\dagger}})^{U_{\dagger}})^*)$ of an inner enveloping quotient $X \inenv U_{\dagger}$ for the action of $U_{\dagger}$ on $X$
satisfies the condition that semistability equals stability with respect to the ample line bundle determined by $L^{\otimes c}$.
\end{enumerate}
\end{lemma}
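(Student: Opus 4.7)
All three parts will rest on a single observation about $\CC^*$-weights. Since $\CC^*$ acts on $\mathrm{Lie}(U)$ with strictly positive weights, every $\CC^*$-weight vector $\xi \in \mathrm{Lie}(U)$ of weight $k>0$ induces an operator $\delta_\xi$ on each $H^0(X,L^{\otimes m})$ that strictly raises $\CC^*$-weight by $k$. Consequently $\delta_\xi$ annihilates the maximum weight subspace, so that
\[
H^0(X,L^{\otimes m})_{-m\omega_{\min}} \subseteq H^0(X,L^{\otimes m})^U
\]
for every $m\ge1$. This identification of the maximum weight sections with automatic $U$-invariants will be the engine for all three parts.

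For part (i), fix a $\CC^*$-stable subgroup $U'\subsetneq U$ and a $\CC^*$-weight vector $\xi \in \mathrm{Lie}(U)\setminus\mathrm{Lie}(U')$. The hypothesis provides, for each $v\in H^0(X,L)_{-\omega_{\min}}$, a lift $u\in H^0(X,L)^{U'}$ with $\delta_\xi(u)=v$. The plan is to propagate this across tensor powers via the Leibniz rule: for any maximum weight section $s\in H^0(X,L^{\otimes(m-1)})_{-(m-1)\omega_{\min}}$, which is $U'$-invariant and satisfies $\delta_\xi(s)=0$ by the first paragraph,
\[
\delta_\xi(u\cdot s) = v\cdot s, \qquad u\cdot s \in H^0(X,L^{\otimes m})^{U'}.
\]
Induction on $m$ then shows every product of maximum weight sections of $L$ lies in $\delta_\xi(H^0(X,L^{\otimes m})^{U'})$. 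The main obstacle is the remaining spanning statement: one must argue that such products span the whole maximum weight subspace of $H^0(X,L^{\otimes m})$. The plan is to observe that restriction to $Z_{\min}$ is injective on maximum weight sections (a weight count on the normal bundle to $Z_{\min}$, whose $\CC^*$-weights are strictly positive), and thereby reduce the spanning to a projective normality statement for the image of $Z_{\min}$, available after replacing $L$ by a sufficiently large power.

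Part (ii) is essentially formal. Any $\CC^*$-stable $U''\subsetneq U_\dagger$ is in particular a $\CC^*$-stable subgroup of $U$; any $\xi\in\mathrm{Lie}(U_\dagger)\setminus\mathrm{Lie}(U'')$ lies in $\mathrm{Lie}(U)\setminus\mathrm{Lie}(U'')$; and $\omega_{\min}$ depends only on the $\CC^*$-action, which is the same for $\hU$ and $\hU_\dagger$. The condition for $\hU$ on $(X,L)$ therefore directly implies the condition for $\hU_\dagger$ on $(X,L)$, and combining with part~(i) gives it for $(X,L^{\otimes m})$.

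Part (iii) reduces to part~(i). For $c_\dagger$ sufficiently divisible, the global sections of the ample line bundle on $\overline{X\inenv U_\dagger}$ are identified with (a quotient of) $V':=H^0(X,L^{\otimes c_\dagger})^{U_\dagger}$, compatibly with the induced $\CC^*$- and $(U/U_\dagger)$-actions. By the first-paragraph observation the maximum weight subspace of $V'$ coincides with $H^0(X,L^{\otimes c_\dagger})_{-c_\dagger\omega_{\min}}$, since the latter is already $U$-invariant and hence a subspace of $V'$. A $\CC^*$-stable proper subgroup of $U/U_\dagger$ has the form $U''/U_\dagger$ for a $\CC^*$-stable $U_\dagger\subseteq U''\subsetneq U$; a $\CC^*$-weight vector $\bar\xi$ in $\mathrm{Lie}(U/U_\dagger)\setminus\mathrm{Lie}(U''/U_\dagger)$ lifts to $\xi\in\mathrm{Lie}(U)\setminus\mathrm{Lie}(U'')$ with $\delta_{\bar\xi}$ realised as the descent of $\delta_\xi$ to $U_\dagger$-invariants; and $(V')^{U''/U_\dagger}=H^0(X,L^{\otimes c_\dagger})^{U''}$. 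Under these identifications the condition to verify for the action of $\hU/U_\dagger$ on $\overline{X\inenv U_\dagger}$ becomes precisely the condition for the action of $\hU$ on $(X,L^{\otimes c_\dagger})$, which holds by part~(i).
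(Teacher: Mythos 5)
Your proposal follows essentially the same route as the paper's proof: the engine in all three parts is that every $\CC^*$-weight vector $\xi\in\Lie(U)$ raises the $\CC^*$-weight on sections, so the extremal weight space is automatically $U$-invariant and killed by $\delta_\xi$, and the Leibniz rule then transports the surjectivity hypothesis from $L$ to $L^{\otimes m}$ by modifying a single factor of a monomial $s_1\cdots s_m$; parts (2) and (3) are handled by the same formal restriction and lifting arguments as in the paper. The one place you diverge is the spanning step: the paper simply asserts that, since $\weight_0^{L^{\otimes m}}=m\weight_0$, every extremal weight section of $L^{\otimes m}$ is a linear combination of products $s_1\cdots s_m$ of extremal weight sections of $L$, whereas you flag this as the main obstacle and propose to deduce it from injectivity of restriction to $Z_{\min}$ together with projective normality after replacing $L$ by a large power. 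Be aware that your fix, as stated, only establishes the condition for sufficiently large tensor powers rather than for \emph{any} positive tensor power as claimed in part (1) (though this suffices for the way the lemma is used in the inductive proof of Theorem \ref{fingen}, where $c$ is taken sufficiently divisible); if you want the full statement you need to justify the spanning claim for every $m\geq 1$, which the paper does not spell out either.
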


\noindent {\bf Proof:}  (1) Suppose that $U'$ is a subgroup of $U$ normalised by $\CC^*$ and that a $\CC^*$-weight vector $\xi$ with weight $a$ belongs to the Lie algebra of $U$ but not the Lie algebra of $U'$, with corresponding infinitesimal action $\delta_\xi = \delta: H^0(X, L) \to H^0(X, L)$. 
By abuse of notation let $\delta$ also denote the induced infinitesimal action on $H^0(X,L^{\otimes m})$. 
As $X$ is $\CC^*$-invariant, the minimum weight $ \weight_{0}^{L^{\otimes m}} $ with which the one-parameter subgroup $\CC^* \leq \hU$ acts on the fibres of the line bundle $\calo_{\PP((H^0(X,L^{\otimes m})^*)}(-1)$ over points of the connected components of the fixed point set $\PP((H^0(X,L^{\otimes m})^*)^{\CC^*}$ for the action of $\CC^*$ on $\PP((H^0(X,L^{\otimes m})^*)$ is $m \weight_{0}$.
%Note that if $b = \delta_\xi(c) $ and $\delta_\xi(a) = 0$ then $\delta_\xi(ac) = ab$. FINISH PROOF
Suppose that $s \in H^0(X,L^{\otimes c})^{U'}$ is a weight vector with weight $ \weight_{0}^{L^{\otimes m}}$
 for the action of $\CC^*$. 
 We want  to show that  there is some section $s'  \in H^0(X,L^{\otimes m})^{U'}$ such that 
 $\delta(s') = s$.
 Since $ \weight_{0}^{L^{\otimes m}} = m \weight_{0}$
we can write $s$ as a linear combination of monomials $s_1\cdots s_m$ where
$s_j \in H^0(X,L)$ is a weight vector with weight $ \weight_{0}$ for the
$\CC^*$ action, which implies that $\delta(s_j) = 0$ for $j=1, \ldots , m$. 
As the linear action of $\hat{U}$ on  $X$ with respect to $L$ satisfies the
condition that semistability equals stability, 
 there is $s_1' \in H^0(X,L)^{U'}$ such that $\delta (s_1') =
 s_1$. It follows that   
$$ \delta(s_1's_2 \cdots s_m) = s_1\cdots s_m$$
where $s_1's_2 \cdots s_m \in H^0(X,L^{\otimes m})^{U'}$ as required.

(2) By (1) we can assume that $m = 1$ and then this follows straight from the definition of what it means for semistability to coincide with stability (Definition \ref{defn:ss=sdimone}).

(3) Recall from Remark \ref{remclaim} the notion of an inner enveloping quotient. A subgroup of $U/U_{\dagger}$ normalised by $\CC^*$ has the form $U'/U_{\dagger}$ where $U'$ is a subgroup of $U$ containing $U_{\dagger}$ and normalised by $\CC^*$. A weight vector in the Lie algebra of $U/U_{\dagger}$ which does not lie in the Lie algebra of $U'/U_{\dagger}$ can be represented by a weight vector $\xi$ in the Lie algebra of $U$ not lying in the Lie algebra of $U'$, and the corresponding infinitesimal action on $H^0(X,L^{\otimes c_{\dagger}})^{U_{\dagger}}$ is the restriction of the infinitesimal action on $H^0(X,L^{\otimes c_{\dagger}})$ determined by $\xi$, so (3) follows from (1).
\hfill $\Box$

\medskip

Our aim is to prove the following theorem, from which Theorem \ref{mainthm} and %to deduce from it %Theorem \ref{mainthm2} and Corollaries 
Corollary \ref{cor1.2} will follow.   %, \ref{cor1.2a} and \ref{mainthm3}.

\begin{theorem} \label{fingen}
Let $X$ be a complex projective variety equipped with a linear action
(with respect to an ample line bundle $L$) of  a
unipotent group $U$ with a one-parameter group of automorphisms
 such that the weights of
the induced $\CC^*$ action on the Lie algebra of
$U$ are all strictly positive.
Suppose that the linear action of $U$
on $X$ extends to a linear action of the
semi-direct product $\hat{U} = \CC^* \ltimes U$. 
Suppose also that the linear action of $\hU$ on $X$ satisfies the condition that \lq semistability coincides with stability' as above. If  $\chi: \hU \to \CC^*$ is a character of $\hat{U}$ and $c$ is a sufficiently divisible positive integer such that the rational character $\chi/c$ is well adapted for the linear action of $\hU$ with respect to $L$, then after twisting this linear action by $\chi/c$ we have
\begin{enumerate}
\item the $\hU$-invariant open subset $X^{s,\hU}_{\min+}$ of $X$ has a geometric quotient $\pi: X^{s,\hU}_{\min+} \to
X^{s,\hU}_{\min+}/\hU$ by the action of $\hU$;
\item this geometric quotient $X^{s,\hU}_{\min+}/\hU$ is a projective variety and the  tensor power $L^{\otimes c}$ of $L$ descends to an ample line bundle $L_{(c,\hU)}$ on $X^{s,\hU}_{\min+}/\hU$;
\item $X^{s,\hU}_{L_\chi^{\otimes c}} = X^{ss,\hU}_{L_\chi^{\otimes c}} = X^{s,\hU}_{\min+} $; 
\item the geometric quotient $X^{s,\hU}_{\min+}/\hU$ is the enveloping quotient $X\env_{L_\chi^{\otimes c}} \hU$;
\item the algebra of invariants 
$\bigoplus_{k \geq 0} H^0( X,L_{k\chi}^{\otimes ck})^{\hU}$ is finitely generated and the enveloping quotient $X\env_{L_\chi^{\otimes c}} \hU  \cong \mathrm{Proj}(\oplus_{k \geq 0} H^0( X,L_{k\chi}^{\otimes ck})^{\hU})$ is the associated projective variety.
\end{enumerate} 
\end{theorem}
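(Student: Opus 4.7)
The proof proceeds by induction on $\dim(U)$, with the base case $\dim(U)=1$ supplied by Theorem \ref{fingendimone}. For the inductive step I would first produce a one-dimensional $\hU$-normal subgroup $U_\dagger$ of $U$. Such a $U_\dagger$ always exists: since the $\CC^*$-weights on $\Lie(U)$ are all strictly positive, the subspace of $\Lie(U)$ of maximal $\CC^*$-weight is automatically central (any nonzero bracket with it would land in a weight space of strictly greater weight, contradicting maximality), so any one-dimensional $\CC^*$-invariant subspace of it is a central Lie subalgebra, integrating to a one-dimensional subgroup $U_\dagger \leq U$ which is normal in $\hU$.

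I would then apply Theorem \ref{fingendimone} to the restricted linear action of $\hU_\dagger := U_\dagger \rtimes \CC^*$ on $X$ with respect to $L$ twisted by $\chi/c$. The adapted condition holds automatically since the $\CC^*$-weights on $H^0(X,L)$ are unchanged, and Lemma \ref{lemtensorpower}(2) ensures that `semistability equals stability' for $\hU$ descends to $\hU_\dagger$. This produces a projective geometric quotient $X^{s,\hU_\dagger}_{\min+}/\hU_\dagger$, an inner enveloping quotient $X \inenv U_\dagger$ embedded as a quasi-projective subvariety of $\PP((H^0(X,L^{\otimes c})^{U_\dagger})^*)$, and a natural linear action of $\hU/U_\dagger = (U/U_\dagger) \rtimes \CC^*$ on its projective closure $\overline{X \inenv U_\dagger}$ extending the residual action on the inner quotient.

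Now $U/U_\dagger$ is a graded unipotent group of dimension $\dim(U)-1$, and by Lemma \ref{lemtensorpower}(3) the hypothesis `semistability equals stability' carries over to the $(\hU/U_\dagger)$-action on $\overline{X \inenv U_\dagger}$ with respect to the ample line bundle induced by $L^{\otimes c}$. The character $\chi/c$ descends to a rational character of $\hU/U_\dagger$ whose well-adaptedness for the descended action I would check directly, shrinking $\epsilon$ if necessary (this is facilitated by the observation that, thanks to the strict positivity of $\CC^*$-weights on $\Lie(U)$, the maximal-weight sections of $H^0(X,L^{\otimes c})$ are automatically $U$-invariant, so the bottom stratum persists after taking $U_\dagger$-invariants). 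The inductive hypothesis applied to $\hU/U_\dagger$ acting on $\overline{X \inenv U_\dagger}$ then yields a projective enveloping quotient $\overline{X \inenv U_\dagger} \env (\hU/U_\dagger)$ with finitely generated invariant algebra, realized as a geometric quotient of its $(\hU/U_\dagger)$-stable locus. Composing the two stages identifies this quotient with the required $X^{s,\hU}_{\min+}/\hU = X \env \hU$ of Theorem \ref{fingen}.

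The principal obstacle is the bookkeeping: threading the stable loci, the twists by $\chi/c$, the passage to tensor powers $L^{\otimes c}$, and the adapted/well-adapted thresholds cleanly through the two-stage quotient, and in particular verifying that $X^{s,\hU}_{\min+}$ coincides exactly with the open subvariety of $X^{s,\hU_\dagger}_{\min+}$ whose image in $\overline{X \inenv U_\dagger}$ lies in the $(\hU/U_\dagger)$-stable locus. Once this compatibility is established, finite generation of the full invariant algebra $\bigoplus_k H^0(X, L_{k\chi}^{\otimes ck})^{\hU}$ follows from Proposition \ref{cor:GiFi5} applied to the (projective, hence quasi-compact and complete) enveloping quotient $X \env \hU$, and Corollaries \ref{cor:invariants} and \ref{cor1.2} follow by applying Theorem \ref{fingen} to $X \times \PP^1$ and by a final reductive GIT step respectively.
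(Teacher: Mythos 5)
Your proposal is correct in outline and shares all the essential machinery with the paper's proof: induction on $\dim(U)$ with Theorem \ref{fingendimone} as the one-dimensional input, Lemma \ref{lemtensorpower} to propagate the condition that semistability coincides with stability, a two-stage quotient passing through the closure of an inner enveloping quotient, and Proposition \ref{cor:GiFi5} to extract finite generation once the enveloping quotient is known to be projective. The one genuine difference is the direction of the decomposition. The paper takes $U_{\dagger}$ normal in $\hU$ of \emph{codimension} one in $U$, applies the inductive hypothesis to $\hU_{\dagger} = \CC^* \ltimes U_{\dagger}$ acting on $X$, and only then applies the already-proved Theorem \ref{fingendimone} to the residual action of the one-dimensional group $\hU/U_{\dagger} \cong \CC^+ \rtimes \CC^*$ on $\overline{X \inenv U_{\dagger}}$. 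You instead peel off a one-dimensional central $U_{\dagger}$ sitting in the top $\CC^*$-weight space of $\Lie(U)$ (your centrality argument is correct, and is essentially the observation the paper uses to build its normal filtration), apply Theorem \ref{fingendimone} first, and feed the $(\dim U - 1)$-dimensional quotient group into the inductive hypothesis acting on $\overline{X \inenv U_{\dagger}}$. Both orderings work, because Lemma \ref{lemtensorpower}(2)--(3) and parts (5)--(6) of Theorem \ref{fingendimone} (respectively the strengthened inductive statements (6)--(7) that the paper carries along) supply exactly the data needed to set up the second stage in either direction; and your remark that the minimal-weight sections are automatically $U$-invariant, so that well-adaptedness survives passage to the quotient after shrinking $\epsilon$, is precisely the point of Remarks \ref{cor:invariants2} and \ref{mindep}. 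What the reversal does not buy you is any shortcut through the real crux, which you correctly identify: the identification $X^{s,\hU}_{\min+}/U_{\dagger} = (\overline{X \inenv U_{\dagger}})^{s,\hU/U_{\dagger}}_{\min+}$, which the paper obtains from $X^{s,\hU}_{\min+} = \bigcap_{u \in U} u\, X^{s,\CC^*}_{\min+}$ together with the inductive statement that $X^{s,\hU_{\dagger}}_{\min+}/U_{\dagger}$ is the $\CC^*$-stable locus of $\overline{X \inenv U_{\dagger}}$. To close your induction you must also carry the analogues of the paper's auxiliary statements (6) and (7) (the inner enveloping quotient $X \inenv U$ and the geometric quotient $X^{s,\hU}_{\min+}/U$) through the inductive step, exactly as the paper does in the final paragraph of its argument.
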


\begin{rem} \label{mindep}
Note that, for any positive integer $m$, Theorem \ref{fingen} holds for a linear action of $\hU$ on $X$ with respect to an ample line bundle $L$ if it holds for the induced linearisation of the action of $\hU$ with respect to the line bundle $L^{\otimes m}$. To see this, we use Lemma \ref{lemtensorpower} and observe that almost all the ingredients of the theorem are unchanged when $L$ is replaced with $L^{\otimes m}$. The only ingredients over which we still need to take care are the concept of well adaptedness and the definition of $X^{s,\hU}_{\min+}$, which both depend on the weights of the action of $\CC^*$ on $H^0(X,L)$ (see Definition \ref{defn:welladapteddimone}). However since $X$ is $\CC^*$-invariant the minimum weight $ \weight_{\min}^{L^{\otimes m}} = \weight_{0}^{L^{\otimes m}} $ with which the one-parameter subgroup $\CC^* \leq \hU$ acts on the fibres of the line bundle $\calo_{\PP((H^0(X,L^{\otimes m})^*)}(-1)$ over points of the connected components of the fixed point set $\PP((H^0(X,L^{\otimes m})^*)^{\CC^*}$ for the action of $\CC^*$ on $\PP((H^0(X,L^{\otimes m})^*)$ is $m \weight_{\min}$, and by variation of GIT for the reductive group $\CC^*$ we know that if $\weight_{\min} = \weight_{0} < \chi/c < \weight_{1}$ then the stable set $X^{s,\CC^*}_{\min+}$ for the linear action of $\CC^*$ with respect to the linearisation $L^{\otimes c}_\chi$ is the same as the stable set for the linear action of $\CC^*$ with respect to the linearisation $(L^{\otimes c}_\chi)^{\otimes m} = L^{\otimes cm}_{m\chi}$. So $ X^{s,\CC^*}_{\min+}$ and $X^{s,\hU}_{\min+} = X \setminus \hU (X \setminus X^{s,\CC^*}_{\min+}) $ are unchanged by replacing $L$ with $L^{\otimes m}$. Finally
$$ \chi/c =  \weight_{0} + \epsilon \,\,\,\,\, \mbox{ iff } \,\,\,\,\,\, m\chi/c = m \weight_{0} + m\epsilon =  \weight_{0}^{L^{\otimes m}} + m\epsilon$$
where $(L^{\otimes ck}_{k\chi})^{\otimes m} = L^{\otimes ckm}_{mk\chi} = (L^{\otimes m})^{\otimes ck}_{km\chi}$ for any $k \geq 0$. Thus $\chi/c$ is well adapted for the linear action of $\hU$ with respect to $L$ if and only if $m\chi/c$ is well adapted for the linear action of $\hU$ with respect to $L^{\otimes m}$. Note however that it is not always true that
$$  \weight_{0} < \chi/c < \weight_{1}  \,\,\,\,\, \mbox{ iff } \,\,\,\,\,\,   \weight_{0}^{L^{\otimes m}} < m\chi/c < \weight_{1}^{L^{\otimes m}} $$
since in general $ \weight_{1}^{L^{\otimes m}} < m \weight_{1}$ although $ \weight_{0}^{L^{\otimes m}} = m  \weight_{0}$.
\end{rem}

\noindent {\bf Proof of Theorem \ref{fingen}: } 
We will use
induction on the dimension of $U$ to prove a slightly stronger result including

(6) the  tensor power $L^{\otimes c}$ of $L$ induces a very ample line bundle on an inner enveloping quotient $X \inenv U$ for the action of $U$ on $X$ with a $\CC^*$-equivariant embedding
$$X \inenv U \to \PP((H^0(X,L^{\otimes c})^U)^*)$$
as a quasi-projective subvariety, containing the geometric quotient $X^{s,U}/U$ as an open subvariety, with closure $\overline{X \inenv U}$ in  $\PP((H^0(X,L^{\otimes c})^U)^*)$, and

 (7) $X^{s,\hU}_{\min+}$ is a $U$-invariant open subset of $X^{s,U}$ and has a geometric quotient $X^{s,\hU}_{\min+}/U$  which is a $\CC^*$-invariant open subset of $X^{s,U}/U$ and coincides with both the stable and semistable sets $(\overline{X \inenv U})^{s,\CC^*} = (\overline{X \inenv U})^{ss,\CC^*}$ for the $\CC^*$ action with respect to the linearisation on $\calo_{\PP((H^0(X,L^{\otimes c})^U)^*)}(1)$ induced by ${L_\chi^{\otimes c}}$, so that the associated GIT quotient satisfies
$$\overline{X \inenv U}/\!/ \CC^* \cong (X^{s,\hU}_{\min+}/U)/\CC^* \cong X^{s,\hU}_{\min+}/\hU = X\env_{L_\chi^{\otimes c}} \hU.$$ 
When $\dim(U) = 1$ so that $U=\CC^+$, this extended version of Theorem \ref{fingen} including (6) and (7) 
follows immediately from Theorem \ref{fingendimone}.
% and the one-parameter group of automorphisms
%has all its weights equal, so that its image is the central
%$\CC^*$ in $\mbox{Aut}(U) \cong \glr$).
%by using %The idea of the proof is to 

Now suppose that $\dim(U) > 1$ and that the extended result is true for all strictly smaller values of $\dim(U)$.
We can assume without loss of generality that $U$ is nontrivial.
The centre of
 $U$ is then nontrivial and isomorphic to 
a product of copies of $\CC^+$ on which $\CC^*$ acts with positive weights. So $U$ has a normal
subgroup $U_0$ which is central in $U$ and normal in $\hat{U}$
and is isomorphic to $\CC^+$, such that the given one-parameter group $\CC^* \leq \hU$ of automorphisms
of $U$ preserves $U_0$ and acts on the Lie algebra of $U_0$ with
positive weight. 
By induction on the dimension of $U$, we can now find a subgroup $U_{\dagger}$ of $U$ which is normal in $\hU$ and such that $U/U_{\dagger}$ is one-dimensional and so isomorphic to $\CC^+$, while $\hU/U_{\dagger}$ is a semidirect product of $U/U_{\dagger}$ by $\CC^*$ where $\CC^*$ acts on the Lie algebra of $U/U_{\dagger}$ with strictly positive weight. 
Let $\hU_{\dagger}  = \CC^* \ltimes U_{\dagger}$ be the subgroup of $\hU$ generated by $U_{\dagger}$ and the one-parameter subgroup $\CC^*$. 

By Lemma \ref{lemtensorpower} the linear action of $\hU_{\dagger}$ on $X$ satisfies the condition that semistability is the same as stability. Thus
by induction on the dimension of $U$ we can assume that, for a sufficiently divisible positive integer $c_{\dagger}$,

\noindent (i) the $\hU_{\dagger}$-invariant open subset $X^{s,\hU_{\dagger}}_{\min+}$ of $X$ has a geometric quotient $\pi: X^{s,\hU_{\dagger}}_{\min+} \to
X^{s,\hU_{\dagger}}_{\min+}/\hU_{\dagger}$ by the action of $\hU_{\dagger}$;

\noindent (ii) this geometric quotient $X^{s,\hU_{\dagger}}_{\min+}/\hU_{\dagger}$ is a projective variety and the  tensor power $L^{\otimes c_{\dagger}}$ of $L$ descends to an ample line bundle $L_{(c_{\dagger},\hU_{\dagger})}$ on $X^{s,\hU_{\dagger}}_{\min+}/\hU_{\dagger}$;   %_{\dagger}

\noindent (iii) the  tensor power $L^{\otimes c_{\dagger}}$ of $L$ induces a very ample line bundle on an inner enveloping quotient $X \inenv U_{\dagger}$ for the action of $U_{\dagger}$ on $X$ with a $\CC^*$-equivariant embedding
$$X \inenv U_{\dagger} \to \PP((H^0(X,L^{\otimes c_{\dagger}})^U_{\dagger})^*)$$
as a quasi-projective subvariety, containing the geometric quotient $X^{s,U_{\dagger}}/U_{\dagger}$ as an open subvariety, with closure $\overline{X \inenv U_{\dagger}}$ in  $\PP((H^0(X,L^{\otimes c_{\dagger}})^{U_{\dagger}})^*)$;

\noindent (iv) $X^{s,\hU_{\dagger}}_{\min+}$ is a $U_{\dagger}$-invariant open subset of $X^{s,U_{\dagger}}$ and has a geometric quotient $X^{s,\hU_{\dagger}}_{\min+}/U_{\dagger}$  which is a $\CC^*$-invariant open subset of $X^{s,U_{\dagger}}/U_{\dagger}$ and, if the rational character $\chi/c_{\dagger}$ is well adapted for the linear action of $\hU_{\dagger}$ with respect to $L$, coincides with both the stable and semistable sets $(\overline{X \inenv U_{\dagger}})^{s,\CC^*} = (\overline{X \inenv U_{\dagger}})^{ss,\CC^*}$ for the $\CC^*$ action with respect to the linearisation induced by ${L_\chi^{\otimes c_{\dagger}}}$ on $\calo_{\PP((H^0(X,L^{\otimes c_{\dagger}})^{U_{\dagger}})^*)}(1)$, so that the associated GIT quotient satisfies
$$\overline{X \inenv U_{\dagger}}/\!/ \CC^* \cong (X^{s,\hU_{\dagger}}_{\min+}/U_{\dagger})/\CC^* \cong X^{s,\hU_{\dagger}}_{\min+}/\hU_{\dagger} = X\env_{L_\chi^{\otimes c_{\dagger}}} \hU_{\dagger}.$$ 

Note that $X^{s,\hU}_{\min +}$ is a $\hU$-invariant open subvariety of $X^{s,\hU_{\dagger}}_{\min +}$. We have an induced linear action of $\hU/U_{\dagger} \cong \CC^+ \rtimes \CC^*$ on $\PP((H^0(X,L^{\otimes c_{\dagger}})^{U_{\dagger}})^*)$ which restricts to a linear action on $\overline{X \inenv U_{\dagger}}$ and to the induced linear action of the open subset $X^{s,\hU}_{\min +}/U_{\dagger}$ of $X^{s,\hU_{\dagger}}_{\min +}/U_{\dagger}
= (\overline{X \inenv U_{\dagger}})^{s,\CC^*} $. We also have
$X^{s,\hU}_{\min +} = \bigcap_{u \in U} X^{s,\CC^*}_{\min +} $ so that
$$X^{s,\hU}_{\min +} /U_{\dagger} = \bigcap_{u \in U/U_{\dagger}} u \, (X^{s,\hU_{\dagger}}_{\min +} /U_{\dagger}) =
\bigcap_{u \in U/U_{\dagger}} u \,\, (\overline{X \inenv U_{\dagger}})^{s,\CC^*}  =  (\overline{X \inenv U_{\dagger}})^{s,\hU/U_{\dagger}} _{\min +}.$$
By Lemma \ref{lemtensorpower} we can apply Theorem \ref{fingendimone} to the action of $\hU/U_{\dagger}$ on the
closure $\overline{X \inenv U_{\dagger}}$ in  $\PP((H^0(X,L^{\otimes c_{\dagger}})^{U_{\dagger}})^*)$ of the inner enveloping quotient $X \inenv U_{\dagger}$ for the action of $U_{\dagger}$ on $X$. It follows that 
$X^{s,\hU}_{\min +} /U_{\dagger} =  (\overline{X \inenv U_{\dagger}})^{s,\hU/U_{\dagger}} _{\min +}$ has a geometric quotient
$$ (X^{s,\hU}_{\min +} /U_{\dagger})/(\hU/U_{\dagger})$$
which is then a geometric quotient for the action of $\hU$ on $X^{s,\hU}_{\min +}$. Furthermore by Theorem \ref{fingendimone} this geometric quotient 
$ (X^{s,\hU}_{\min +} /U_{\dagger})/(\hU/U_{\dagger}) = X^{s,\hU}_{\min +}/\hU$ is 
a projective variety
 and for a sufficiently divisible multiple $c$ of $c_{\dagger}$ the  tensor power $L^{\otimes c}$ of $L$ descends to a very ample line bundle $L_{(c,\hU)}$ on $X^{s,\hU}_{\min+}/\hU$; in addition if $\chi/c =  \weight_{0} + \epsilon$ where $\epsilon >0$ is sufficiently small then 
 $X^{s,\hU}_{\min+}/U_{\dagger} $ is the stable set for the $\hU/U_{\dagger}$-action on $\overline{X \inenv U_{\dagger}}$ with respect to the linearisation induced by $L^{\otimes c}$ and twisted by the rational character $\chi/c$, so 
$$ X^{s,\hU}_{\min+} \subseteq X^{s,\hU}_{L^{\otimes c}_{\chi}}$$
by Remark \ref{rmk:GiSt2}(ii).

Conversely if $\epsilon < \weight_{1} -  \weight_{0}$ then $X^{s,\hU}_{L^{\otimes c}_{\chi}}$ is a $\hU$-invariant subset of $X^{s,\CC^*}_{\min+} = X^{s,\CC^*}_{L^{\otimes c}_{\chi}}$ by Remark \ref{rmk:GiSt2}(i), so
$$ X^{s,\hU}_{L^{\otimes c}_{\chi}} \subseteq \bigcap_{u \in U}  u \, X^{s,\CC^*}_{\min+} = X^{s,\hU}_{\min+}
$$ and hence $ X^{s,\hU}_{L^{\otimes c}_{\chi}}  = X^{s,\hU}_{\min+}
$.

Since the geometric quotient $X^{s,\hU}_{\min +}/\hU = X^{s,\hU}_{L^{\otimes c}_{\chi}}/\hU$ is 
a projective variety
 with  a very  ample line bundle $L_{(c,\hU)}$ induced by the  tensor power $L^{\otimes c}$ of $L$, it follows from 
Proposition \ref{cor:GiFi5} that $X^{s,\hU}_{L^{\otimes c}_{\chi}} = X^{ss,\hU}_{L^{\otimes c}_{\chi}}$, that this geometric quotient coincides with the enveloping quotient $X\env_{L_\chi^{\otimes c}} \hU$, and that if $c$ is replaced with a sufficiently divisible multiple then  the algebra of invariants 
$\bigoplus_{k \geq 0} H^0( X,L_{k\chi}^{\otimes ck})^{\hU}$ is finitely generated and the enveloping quotient $X\env_{L_\chi^{\otimes c}} \hU  \cong \mathrm{Proj}(\oplus_{k \geq 0} H^0( X,L_{k\chi}^{\otimes ck})^{\hU})$ is the associated projective variety.

We can 
apply Theorem \ref{fingendimone} to the action of $\hU/U_{\dagger}$ on the
closure $\overline{X \inenv U_{\dagger}}$ in  $\PP((H^0(X,L^{\otimes c_{\dagger}})^{U_{\dagger}})^*)$ of the inner enveloping quotient $X \inenv U_{\dagger}$ for the action of $U_{\dagger}$ on $X$. It then follows by induction that after replacing $c$ with a sufficiently divisible multiple if necessary, we can assume that there is an inner enveloping quotient $X \inenv U$ for the linear action of $U$ on $X$ with respect to the linearisation $L^{\otimes c}_{\chi}$ obtained by 
considering the induced action of the subgroup $U/U_{\dagger}$ of $\hU/U_{\dagger}$ on $\overline{X \inenv U_{\dagger}}$. We can also assume that the  tensor power $L^{\otimes c}$ of $L$ induces a very ample line bundle on $X \inenv U$ so that there is a $\CC^*$-equivariant embedding
$$X \inenv U \to \PP((H^0(X,L^{\otimes c})^U)^*)$$
of $X \inenv U$ as a quasi-projective subvariety, containing the geometric quotient $X^{s,U}/U$ as an open subvariety, with closure $\overline{X \inenv U}$ in  $\PP((H^0(X,L^{\otimes c})^U)^*)$, such that 
 $X^{s,\hU}_{\min+}$ is a $U$-invariant open subset of $X^{s,U}$ and has a geometric quotient $X^{s,\hU}_{\min+}/U$  which is a $\CC^*$-invariant open subset of $X^{s,U}/U$ and coincides with both the stable and semistable sets $(\overline{X \inenv U})^{s,\CC^*} = (\overline{X \inenv U})^{ss,\CC^*}$ for the $\CC^*$ action with respect to the linearisation on $\calo_{\PP((H^0(X,L^{\otimes c})^U)^*)}(1)$ induced by ${L_\chi^{\otimes c}}$. It then follows that the associated GIT quotient satisfies
$$\overline{X \inenv U}/\!/ \CC^* \cong (X^{s,\hU}_{\min+}/U)/\CC^* \cong X^{s,\hU}_{\min+}/\hU = X\env_{L_\chi^{\otimes c}} \hU$$
and this completes the inductive proof. 
 \hfill $\Box$

\bigskip

We have now proved Theorem \ref{mainthm} and Corollary \ref{cor:invariants2}, which follow immediately from
 Theorem \ref{fingen}. 
Corollary \ref{cor1.2} follows directly as well, since if a
complex linear algebraic group $H$ with unipotent radical $U$ 
acts on a complex algebra $A$ in such a way that the algebra
of $U$-invariants $A^U$ is finitely generated, then there is an induced
action on $A^U$ of the reductive group $R=H/U$, and the algebra of
$H$-invariants
$$A^H = (A^U)^R$$
is finitely generated since $R$ is reductive. 
In the situation of Corollary \ref{cor1.2} when $A$ is the algbra $\oplus_{k \geq 0} H^0( X,L_{k\chi}^{\otimes ck}$ 
then the associated projective variety is the enveloping quotient $X\env H$, and this enveloping quotient is the GIT quotient of the enveloping quotient $X\env\hU$ by the reductive subgroup of $R$ which is its intersection with the kernel of the character $\chi$, with respect to the induced linearisation. The result follows from combining Theorem \ref{fingen} with classical GIT for the action of this reductive subgroup of $R$.

\section{Automorphism groups of toric varieties}

In this section we observe that if $Y$ is a complete simplicial 
toric variety then its automorphism group $Aut(Y)$ satisfies the conditions of
Corollary \ref{cor1.2}, so that every well adapted linear action of
$Aut(Y)$ on a projective variety $X$ with respect to an ample line bundle for which semistability coincides with stability has finitely generated invariants and its enveloping quotient is a geometric quotient of $X^{ss}$.

For this we use the description of $Aut(Y)$ given in \cite{cox}. Let $Y$ be a complete simplicial toric variety over $\CC$ of dimension $n$, and let $S$ be its 
homogeneous coordinate ring in the sense of \cite{cox}. Thus
$$S = \CC[x_\rho : \rho \in \Delta(1)]$$
is a polynomial ring in $d = |\Delta(1)|$ variables $x_\rho$, one for each
one-dimensional cone $\rho$ in the fan $\Delta$ determining the toric variety $Y$. The homogeneous coordinate ring $S$ is graded by setting the degree of a 
monomial $\prod_\rho x_\rho^{a_\rho}$ to be the class of the corresponding Weil
divisor $\sum_\rho a_\rho D_\rho$ in the Chow group $A_{n-1}(Y)$, giving us the decomposition
$$S = \bigoplus_{\alpha \in A_{n-1}(Y)} S_\alpha$$
where $S_\alpha$ is spanned by the monomials of degree $\alpha$. Then we have
$$S_\alpha = S^{'}_{\alpha} \oplus S^{''}_\alpha$$
where $S^{'}_{\alpha}$ is spanned by the $x_\rho$ of degree $\alpha$ and 
$S^{''}_\alpha$ is spanned by the remaining monomials in $S_\alpha$, each being 
a product of at least two variables.

Then by \cite{cox}
Theorem 4.2 and Proposition 4.3, $Aut(Y)$ is an affine algebraic group fitting
into an exact sequence
$$1 \to {\rm Hom}_\ZZ(A_{n-1}(Y),\CC^*) \to \widetilde{Aut}(Y) \to Aut(Y) \to 1$$
with ${\rm Hom}_\ZZ(A_{n-1}(Y),\CC^*)$ isomorphic to a product of a finite group and a torus
$(\CC^*)^{d-n}$, and the identity component $\widetilde{Aut}^0(Y)$ of
$\widetilde{Aut}(Y)$ satisfies
$$\widetilde{Aut}^0(Y) \cong U \rtimes \tilde{R}$$
for 
$$\tilde{R} \cong \prod_\alpha GL(S^{'}_{\alpha})$$
and the unipotent radical $U$ of $\widetilde{Aut}^0(Y)$ is given by
$$U = 1 + \caln$$
where $\caln$ is the ideal
$$\caln = \bigoplus_\alpha {\rm Hom}_\CC(S^{'}_{\alpha}, S^{''}_\alpha)$$
in ${\rm End(S)}$. The reductive group $\tilde{R} \cong \prod_\alpha GL(S^{'}_{\alpha})$ acts in the obvious way on $S$ by identifying $S$ with
the symmetric algebra on $\bigoplus_\alpha S^{'}_{\alpha}$, so that $r \in \tilde{R}$ acts on ${\rm Hom}_\CC(S^{'}_{\alpha},S^{''}_{\alpha}$ for each
$\alpha \in A_{n-1}(Y)$, and thus on $\caln$, via pre-composition with the action of $r$ on $S^{'}_{\alpha}$ and post-composition with the induced action of
$r^{-1}$ on 
$$ S^{''}_{\alpha} \subseteq \bigoplus_{j \geq 2} \Sym^j(\bigoplus_\alpha
S^{'}_{\alpha}).$$
It follows that if we embed $\CC^*$ in $\tilde{R} = \prod_\alpha GL(S^{'}_{\alpha})$ via 
$$t \mapsto (t^{-1}{\rm id}_{S^{'}_{\alpha}})_\alpha$$
where ${\rm id}_{S^{'}_{\alpha}}$ is the identity in $GL(S^{'}_{\alpha})$,
then the weights of the action of $\CC^*$ on the Lie algebra $\caln$ of $U$ are 
all of the form
$$t \mapsto t^{j-1}$$
for some $j \geq 2$, so that $j-1 > 0$. Thus we obtain

\begin{lemma}
If $Y$ is a complete simplicial toric variety then $Aut(Y)$ is
of the form 
$$Aut(Y) \cong U \rtimes R$$
where $U$ is unipotent and $R$ is reductive, and $R$ contains
a one-parameter subgroup $\CC^* \leq R$ such that the action of $\CC^*$ on the Lie algebra of $U$ induced by its conjugation action on $U$ has all weights strictly positive.
\end{lemma}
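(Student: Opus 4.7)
The plan is to assemble the structural facts established in the preceding discussion into the stated Levi decomposition. By Cox's exact sequence, $\widetilde{\Aut}(Y)$ is an extension of $\Aut(Y)$ by the reductive (and central) group $\Hom_\ZZ(A_{n-1}(Y),\CC^*)$, and its identity component has the semidirect product decomposition $\widetilde{\Aut}^0(Y) \cong U \rtimes \tilde R$ with $\tilde R = \prod_\alpha \GL(S'_\alpha)$ reductive. Since the kernel of $\widetilde{\Aut}(Y) \to \Aut(Y)$ is central and reductive, it is contained in the Levi factor $\tilde R$ and meets $U$ trivially; consequently $U$ descends isomorphically to the unipotent radical of $\Aut^0(Y)$ with Levi complement $R_0 := \tilde R/(\tilde R \cap \Hom_\ZZ(A_{n-1}(Y),\CC^*))$. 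Because $\Aut(Y)/\Aut^0(Y)$ is finite, the Levi decomposition in characteristic zero (Mostow) extends this to $\Aut(Y) \cong U \rtimes R$ with $R$ reductive.

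I would next produce the required one-parameter subgroup as the image in $R$ of the map $\lambda:\CC^* \to \tilde R$, $t \mapsto (t^{-1}\mathrm{id}_{S'_\alpha})_\alpha$. To see that $\lambda$ remains an embedding after quotienting by $\tilde R \cap \Hom_\ZZ(A_{n-1}(Y),\CC^*)$, note that an element of this kernel acts on each $S'_\alpha$ by a scalar $\chi(\alpha)$ for some character $\chi$ of $A_{n-1}(Y)$, so if $\lambda(t)$ lay in the kernel then $\chi(\alpha) = t^{-1}$ for every class $\alpha = [D_\rho]$. Since the classes $[D_\rho]$ generate $A_{n-1}(Y)$, this forces $\chi$ to be trivial and hence $t = 1$.

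For the weight condition, the Lie algebra $\caln = \bigoplus_\alpha \Hom_\CC(S'_\alpha, S''_\alpha)$ decomposes further by the symmetric degree $j \geq 2$ of the target, since every monomial in $S''_\alpha$ is a product of at least two generators. The element $\lambda(t)$ acts on $S'_\alpha$ by $t^{-1}$ and on the degree-$j$ part of $\Sym^\bullet(\bigoplus_\beta S'_\beta)$ by $t^{-j}$, so the pre-composition/post-composition conjugation action on the summand $\Hom_\CC(S'_\alpha,(S''_\alpha)_j)$ yields weight $j-1 \geq 1$, which is strictly positive.

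The only real obstacle is the bookkeeping: confirming that $\lambda$ survives the projection $\tilde R \twoheadrightarrow R$ (which uses that the $[D_\rho]$ span $A_{n-1}(Y)$), and that the sign convention in the semidirect-product conjugation action produces positive rather than negative weights (which is a direct calculation from the explicit formula for $\lambda$). Otherwise the lemma is an immediate assembly of Cox's description with the Levi decomposition.
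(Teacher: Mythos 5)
Your overall route is the same as the paper's: quote Cox's description of $\widetilde{Aut}(Y)$, take the one-parameter subgroup $t \mapsto (t^{-1}\mathrm{id}_{S'_{\alpha}})_\alpha$ of $\tilde R = \prod_\alpha GL(S'_{\alpha})$, and compute that conjugation acts on the summand of $\caln$ mapping into symmetric degree $j$ with weight $j-1 \geq 1$. That computation, including the sign, is correct, and your extra bookkeeping about descending from $\widetilde{Aut}^0(Y)$ to $Aut(Y)$ is more careful than what the paper writes down.

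However, the one step you single out as "the only real obstacle" is resolved incorrectly. You claim that if $\lambda(t)$ lies in the kernel $\Hom_\ZZ(A_{n-1}(Y),\CC^*)$ then $\chi([D_\rho]) = t^{-1}$ for all $\rho$, and that since the classes $[D_\rho]$ generate $A_{n-1}(Y)$ this forces $\chi$ trivial and $t=1$. A character taking a common nontrivial value on a generating set need not be trivial: for $Y = \PP^n$ every $[D_\rho]$ equals the same generator of $A_{n-1}(\PP^n) \cong \ZZ$, the assignment $\chi(1)=t^{-1}$ is a perfectly good character for every $t$, and indeed $\lambda(\CC^*)$ is exactly the central $\CC^*$ of $GL(n+1)$, which dies entirely in $Aut(\PP^n)=PGL(n+1)$. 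So $\lambda$ does \emph{not} in general survive the projection as an embedding. The statement is nonetheless salvageable: the kernel group acts on each graded piece $S_\alpha$ by the single scalar $\chi(\alpha)$, hence acts trivially under conjugation on $\caln = \bigoplus_\alpha \Hom_\CC(S'_{\alpha},S''_{\alpha})$ (it is central in $\widetilde{Aut}^0(Y)$), whereas $\lambda(t)$ acts on the degree-$j$ piece by $t^{j-1}$. Therefore $\lambda(t)$ can lie in the kernel only if $t^{j-1}=1$ for every $j$ occurring, so either $\caln = 0$ (in which case $U$ is trivial and the weight condition is vacuous, as for $\PP^n$) or the kernel of the composite $\CC^* \to R$ is finite, and the image is still a one-parameter subgroup of $R$ whose conjugation action on $\mathrm{Lie}(U)$ has strictly positive (reparametrised) weights. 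With that repair your argument goes through and matches the paper's.
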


As an immediate consequence of this lemma and Corollary \ref{cor1.2} we have

\begin{corollary}
Any well adapted linear action of
$H=Aut(Y)$ on a projective variety $X$ with respect to an ample line bundle $L$, for which semistability coincides with stability for the action of its unipotent radical $U$ extended by the central one-parameter subgroup of $Aut(Y)/U$ described above, has finitely generated invariants when $L$ is replaced by a tensor power $L^{\otimes c}$ for a sufficiently divisible positive integer $c$. Furthermore its enveloping quotient $X\env H$ is the associated projective variety and is a categorical quotient of $X^{ss}$ by the action of $H$, while the canonical morphism $\phi:X^{ss} \to X\env H$ is surjective with $\phi(x)=\phi(y)$ if and only if the closures of the $H$-orbits of $x$ and $y$ meet in $X^{ss}$.
\end{corollary}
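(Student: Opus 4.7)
The proof is a direct invocation of the preceding Lemma together with Corollary~\ref{cor1.2}; essentially all the substantive work has already been done in identifying the graded unipotent structure inside $\Aut(Y)$ and in proving Corollary~\ref{cor1.2}, so my plan is simply to verify that the hypotheses of Corollary~\ref{cor1.2} are satisfied and then read off the stated conclusion.

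First, I would appeal to the preceding Lemma to obtain the semidirect product decomposition $H = \Aut(Y) \cong U \rtimes R$, with $U$ unipotent, $R$ reductive, together with the distinguished central one-parameter subgroup $\CC^* \leq R$ whose conjugation action on $\Lie(U)$ has all weights strictly positive. This realises the graded unipotent $\hU = U \rtimes \CC^*$ as a subgroup of $H$, placing us within the exact framework considered by Corollary~\ref{cor1.2}.

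Next, I would match the remaining hypotheses of Corollary~\ref{cor1.2} with what is assumed in the statement: the linear action of $H$ on $(X,L)$ is well adapted (with the rational character $\chi/c$ restricted to $\hU$ being well adapted in the sense of Definition~\ref{defn:welladapteddimone}), and the action of the unipotent radical $U$ together with the central $\CC^*$ satisfies the condition \emph{semistability coincides with stability}. These are precisely the two running assumptions of Corollary~\ref{cor1.2}.

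Finally, I would apply Corollary~\ref{cor1.2} directly. For a sufficiently divisible positive integer $c$ it delivers finite generation of the invariant algebra $\bigoplus_{m \geq 0} H^0(X, L_{m\chi}^{\otimes cm})^H$, identifies $X\env H$ with $\Proj$ of this algebra, produces an open subvariety $X^{ss,H} \subseteq X$ (with $X^{ss}$ and $X^s$ cut out by Hilbert--Mumford-type criteria, as in Remark~\ref{wellvsadapted}) whose categorical quotient by $H$ is $X\env H$, and establishes that the canonical morphism $\phi:X^{ss,H} \to X\env H$ is surjective with $\phi(x)=\phi(y)$ if and only if the closures of the $H$-orbits of $x$ and $y$ meet inside $X^{ss,H}$. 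I do not foresee any obstacle in this plan, since the nontrivial input (quotient first by $\hU$ via Theorem~\ref{mainthm}, then by the induced action of the reductive group $R/\CC^*$ via classical GIT) is entirely absorbed into Corollary~\ref{cor1.2}; the only thing to check is that the preceding Lemma supplies exactly the structural data that corollary requires, and it does.
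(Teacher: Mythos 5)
Your proposal is correct and follows exactly the route the paper takes: the paper presents this corollary as an immediate consequence of the preceding Lemma (which supplies the decomposition $Aut(Y) \cong U \rtimes R$ with the central $\CC^* \leq R$ acting on $\mathrm{Lie}(U)$ with strictly positive weights) combined with Corollary~\ref{cor1.2}. Your verification of the hypotheses and direct invocation of Corollary~\ref{cor1.2} is precisely the intended argument.
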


\section{Jet
differentials and generalised Demailly--Semple jet bundles} \label{jetdifferentials}

Our remaining aim is to apply our results
to a family of examples involving non-reductive
reparametrisation groups which arise in singularity theory and
the study of jets of curves. We borrow
notation from \cite{dem}.

Let $X$ be a complex $n$-dimensional manifold. Green and Griffiths
in \cite{gg} introduced a bundle $J_k \to X$, the bundle
of $k$-jets of germs of parametrised curves in $X$; that is, the
fibre over $x\in X$ is the set of equivalence classes of holomorphic
maps $f:(W,0) \to (X,x)$ where $W$ is an open neighbourhood of $0$ in $\CC$, with the equivalence relation $f\sim g$
if and only if the $j$th derivatives $f^{(j)}(0)=g^{(j)}(0)$ are equal for
$0\le j \le k$. If we choose local holomorphic coordinates
$(z_1,\ldots, z_n)$ on an open neighbourhood $\Omega \subset X$
around $x$, the elements of the fibre $J_{k,x}$ are represented
by the Taylor expansions 
\[f(t)=f(0)+tf'(0)+\frac{t^2}{2!}f''(0)+\ldots +\frac{t^k}{k!}f^{(k)}(0)+O(t^{k+1}) \]

up to order $k$ at $t=0$ of $\CC^n$-valued holomorphic 
maps
\[f=(f_1,f_2,\ldots, f_n):(\CC,0) \to (\CC^n,x).\]
 In these coordinates we have
\[J_{k,x} \cong \left\{(f'(0),\ldots, f^{(k)}(0)/k!)\right\} \cong (\CC^n)^k,\]
which we identify with $\CC^{nk}$.  Note, however, that $J_k$ is not
a vector bundle over $X$, since the transition functions are polynomial, but
not in general linear.

Let $\GG_k$ be the group of $k$-jets of biholomorphisms
\[(\CC,0) \to (\CC,0);\]
that is, the $k$-jets at the origin of local reparametrisations
\[t \mapsto \varphi(t)=\a_1t+\a_2t^2+\ldots +\a_kt^k,\ \ \ \a_1\in
\CC^*,\a_2,\ldots,\a_k \in \CC,\] in which the composition law is
taken modulo terms $t^j$ for $j>k$. This group acts fibrewise on
$J_k$ by substitution. A short computation shows that the action on the fibre
is linear:
\begin{eqnarray*}\label{compose} f \circ
\varphi(t)=f'(0)\cdot(\a_1t+\a_2t^2+\ldots
+\a_kt^k)+\frac{f''(0)}{2!}\cdot (\a_1t+\a_2t^2+\ldots
+\a_kt^k)^2+\ldots \\
\ldots +\frac{f^{(k)}(0)}{k!}\cdot (\a_1t+\a_2t^2+\ldots +\a_kt^k)^k 
\mbox{ (modulo }  t^{k+1})  %=\\
%(f'\a_1)t+(f'\a_2+\frac{f''}{2!}\a_1^2)t^2+\ldots
\end{eqnarray*}
so the linear action of $\varphi$ on the $k$-jet $(f'(0),\ldots,
f^{(k)}(0)/k!)$ is given by the following matrix multiplication:
\begin{equation}\label{matrixform}
%\varphi \cdot (f',f''/2!, \ldots ,f^{(k)}/k!)=
(f'(0),f''(0)/2!,\ldots,f^{(k)}(0)/k!) 
\left(\begin{array}{ccccc}
\a_1 & \a_2 & \a_3 & \cdots  & \a_k \\
0        & \a_1^2 & 2\a_1\a_2 & \cdots &  \a_1\a_{k-1}+\ldots +\a_{k-1}\a_1 \\
0        & 0       & \a_1^3  & \cdots & 3\a_1^2\a_{k-2}+\ldots \\
\cdot    & \cdot   & \cdot    & \cdot &  \cdot \\
0 & 0 & 0 & \cdots  & \a_1^k 
\end{array} \right)
\end{equation}
with $(i,j)$th entry 
\[ %(G_{k})_{i,j}=
\sum_{s_1+\ldots +s_i=j}\a_{s_1}\ldots \a_{s_i}\]
for $i,j\le k$. 

There is an exact sequence of groups:
\[0 \rightarrow \UU_k \rightarrow \GG_k \rightarrow \CC^* \rightarrow
0,\] where $\GG_k \to \CC^*$ is the morphism $\varphi \to
\varphi'(0)=\a_1$ in the notation used above, and
\[\GG_k=\UU_k \ltimes \CC^*\]
is a semi-direct product of $\UU_k$ by $\CC^*$. With the above identification, $\CC^*$ is
the subgroup of diagonal matrices satisfying $\a_2=\ldots =\a_k=0$ and
$\UU_k$ is the unipotent radical of $\GG_k$, i.e. the subgroup of matrices
with $\a_1=1$. The action
of  $\l \in \CC^*$ on $k$-jets is described by
\[\l\cdot (f'(0),f''(0),\ldots ,f^{(k)}(0))=(\l f'(0),\l^2 f''(0),\ldots,
\l ^kf^{(k)}(0)).\]

Let $\cale_{k,m}^n$ denote the vector space of polynomials
$Q(u_1,u_2,\ldots, u_k)$,  of weighted degree $m$, with respect to
this $\CC^*$ action, where $u_i=f^{(i)}(0)$; that is, such that
\[Q(\l u_1,\l^2 u_2,\ldots, \l^k u_k)=\l^m Q(u_1,u_2,\ldots,
u_k).\]
Elements of $\cale_{k,m}^n$ have the form
\[Q(u_1,u_2,\ldots, u_k)=\sum_{|\a_1|+2|\a_2|+\ldots
+k|\a_k|=m}u_1^{\a_1}u_2^{\a_2}\ldots u_k^{\a_k},\] where
$\a_1,\a_2,\ldots, \a_k$ are multi-indices of length $n$.

$\cale_{k,m}^n$ can be identified with
 the fibre of the vector bundle $E^{GG}_{k,m} \to
X$  introduced by Green and Griffiths in
\cite{gg}, whose fibres consist of polynomials on the fibres
of $J_k$ of weighted degree $m$ with respect to the fibrewise
$\CC^*$ action on $J_k$.

The action of $\GG_k$ naturally induces an action on the vector space
$$\cale_k^n = \bigoplus_{m\geq 0} \cale_{k,m}^n = \calo(J_{k,x})$$ 
of polynomial functions on $J_{k,x}$.
%which we can twist by a multiple of
%the character $\GG_k \to \CC^*$ of $\GG_k$ so that $\CC^*$ acts on
%$\cale_{k,m}^n$ with weight $m$. 
Following Demailly (\cite{dem}), we define %the subbundle 
$\tilde{\cale}_{k,m}^n \subset \cale_{k,m}^n$ %, whose fiber is identified with 
to be the vector space of
$\UU_k$-invariant polynomials of weighted degree $m$, i.e. those which satisfy
\[Q((f\circ \varphi)',(f\circ \varphi)'', \ldots, (f\circ
\varphi)^{(k)})=\varphi'(0)^m\cdot Q(f',f'',\ldots, f^{(k)}).\]
Thus $\tilde{\cale}_{k}^n = \bigoplus_{m\geq 0} \tilde{\cale}_{k,m}^n
= \calo(J_{k,x})^{\UU_k}$ consists of the polynomials functions on $J_{k,x}$ which
are invariant under the induced action of $\UU_k$ on $\calo(J_{k,x})$.
The corresponding bundle of invariants is the Demailly-Semple bundle
of algebras $E_k^n=\oplus_m E^n_{k,m} \subset
\oplus_m E_{k,m}^{GG}$ with fibres $\tilde{\cale}_{k}^n = \bigoplus_{m\geq 0} \tilde{\cale}_{k,m}^n
= \calo(J_{k,x})^{\UU_k}$.

This bundle of graded algebras  $E_k^n=\oplus_m E_{k,m}^n$  has 
been an important object of study for a long time. The invariant jet
differentials
play a crucial role in the strategy developed by Green, Griffiths,
Bloch, Ahlfors, Demailly, Siu and others to prove Kobayashi's 1970 hyperbolicity
conjecture  \cite{ahl,blo,dem,dmr,dr,gg, kob,merker, siu1,siu2,siu3}.

We can now apply 
Theorem \ref{mainthm}, Corollary \ref{cor:invariants} and Remark \ref{cor:invariants2} to  %unipotent group $\UU_k$ and the
 linear
action of $\GG_k$  on the projective variety associated to $J_{k,x}$. In this case we can also apply the results of \cite{BK15} since $\GG_k$ is a subgroup of $\GL(k;\CC)$ which is \lq generated along the first row' in the sense of \cite{BK15}, and the action of $\GG_k$ extends to $\GL(k;\CC)$.

We can also consider a generalised version of the Demailly-Semple jet differentials to which the results of \cite{BK15} do not apply. Instead of germs of holomorphic maps $\CC \to X$, we now consider higher
dimensional holomorphic objects in $X$, and therefore we fix a
parameter $1\le p \le n$, and study germs of holomorphic maps $\CC^p \to X$.

Again we fix the degree $k$ of these maps, and introduce the bundle
$J_{k,p} \to X$ of $k$-jets of germs of holomorphic maps $\CC^p \to X$. 
With respect to local holomorphic coordinates near $x\in X$ the
fibre over $x$ is identified with the set of equivalence classes of holomorphic
maps $f:(\CC^p,0) \to (\CC^n,x)$, with the equivalence relation $f\sim
g$ if and only if all derivatives $f^{(j)}(0)=g^{(j)}(0)$ are equal for
$0\le j \le k$. Equivalently the elements
of the fibre $J_{k,p,x}$ are the Taylor expansions 
\[f(\bu)=x+\bu f'(0)+\frac{\bu^2}{2!}f''(0)+\ldots +\frac{\bu^k}{k!}f^{(k)}(0)+O(|\bu|^{k+1}) \]
around
$\bu=0$ up to order $k$
of $\CC^n$-valued maps
\[f=(f_1,f_2,\ldots, f_n):(\CC^p,0) \to (\CC^n,x).\]
Here
\[f^{(i)}(0)\in \Hom(\Sym^i\CC^p,\CC^n)\]
so that in these coordinates the fibre is
\[J_{k,p,x}=\left\{(f'(0),\ldots, f^{(k)}(0)/k!)\right\}=\CC^{n( {k+p \choose k}-1)}\]
which is a finite dimensional vector space.

Let $\GG_{k,p}$ be the group of $k$-jets of germs of biholomorphisms
of $(\CC^p,0)$, that is, the group of biholomorphic maps
\begin{equation} \bu \to \varphi(\bu)=\Phi_1\bu+\Phi_2\bu^2+\ldots +\Phi_k\bu^k=
\sum_{1 \leq i_1 + \cdots +i_p \leq k}a_{i_1\ldots
i_p}u_1^{i_1}\ldots u_p^{i_p}
\end{equation}
%up to order $k$ 
for which $\Phi_i \in \Hom(\Sym^i\CC^p,\CC^p)$
and $\Phi_1 \in \Hom(\CC^p,\CC^p)$ is non-degenerate.
Then $\GG_{k,p}$ admits a natural fibrewise right action on $J_{k,p}$,
which consist of reparametrizing the $k$-jets of holomorphic
$p$-discs. A similar computation to at \eqref{compose} shows that
\[f \circ\varphi(\bu)=(f'(0)\Phi_1)\bu+(f'(0)\Phi_2+\frac{f''(0)}{2!}\Phi_1^2)\bu^2+\ldots
+ \ldots +\sum_{i_1+\ldots +i_l=k}
(\frac{f^{(l)}(0)}{l!}\Phi_{i_1}\ldots \Phi_{i_l})\bu^l.\]
%where $\Phi_i \in \Hom(\Sym^i\CC^p,\CC^p)$. 
This is a linear action on the fibres $J_{k,p,x}$ with matrix
given by
\begin{equation} \label{(4)}
\left(
\begin{array}{ccccc}
\Phi_1 & \Phi_2 & \Phi_3 & \ldots & \Phi_k \\
0 & \Phi_1^2 & \Phi_1\Phi_2 & \ldots &       \\
0 & 0 & \Phi_1^3 & \ldots & \\
. & . & . & . & . \\
& & & & \Phi_1^k
\end{array}
\right),
\end{equation}
where
%\begin{itemize}
%\item 
$\Phi_i$ is a $p \times \dim (\Sym^i \CC^p)$-matrix, the $i$th
degree component of the map $\Phi$ and %. We can think of $\Phi_i$ as a
%map $(\CC^p)^{\otimes i} \to \CC^p$
the $p \times p$matrix $\Phi_1$ is invertible. Here
$\Phi_{i_1} \ldots \Phi_{i_l}$ is the matrix of the map $\Sym^{i_1+\ldots
+i_l}(\CC^p)\to \Sym^{l}\CC^p$, which is induced by 
\[ %\sum_{\s \in \mathcal{S}_l}\s(
\Phi_{i_1} \otimes \cdots \otimes
\Phi_{i_l}:(\CC^p)^{\otimes i_1} \otimes \cdots \otimes
(\CC^p)^{\otimes i_l} \to (\CC^p)^{\otimes l}\]
%\end{itemize}

The linear group $\GG_{k,p}$ is generated along its first $p$ rows,
in the sense that the parameters in the first $p$ rows are independent, and
all the remaining entries are polynomials in these parameters. The
only condition which the parameters must satisfy is that the determinant of the first
diagonal $p\times p$ block is nonzero.
Note that $\GG_{k,p}$ is an extension of its unipotent radical $\UU_{k,p}$
(given by $\Phi_1=1$ by
$GL(p;\CC)$ (given by $\Phi_i = 0$ for $i>1$), so we have an exact sequence
\[0 \rightarrow \UU_{k,p} \rightarrow \GG_{k,p} \rightarrow GL(p;\CC) \rightarrow
0.\] %and $\GG_{k,p}=GL_p \ltimes U_{k,p}$. $\GG_{k,p}$ has dimension
%$p\times \dim (\oplus \Sym^i \CC^p)$, and is a subgroup of the
%parabolic $P_{(p,\dim \Sym^2 \CC^p,\ldots ,\dim \Sym^k \CC^p)}$.
The central $\CC^*$ of $GL(p;\CC)$ corresponds to the diagonal matrices
with entries $t,t^2, \ldots, t^k$ for $t \in \CC^*$ where $t^i$ occurs
$\dim(\Sym^i(\CC^p))$ times, and these act by conjugation on the Lie algebra
of $\UU_{k,p}$ with weights $i-1$ for $2 \leq i \leq k$. Thus 
by Corollary \ref{cor1.2} we have

\begin{corollary} \label{po}
Any linear action of $\GG_{k,p}$ 
 on a projective variety $X$ with respect to an ample line bundle $L$ for which semistability coincides with stability for the action of $\UU_{k,p}$ extended by the central one-parameter subgroup of $\GL(p;\CC)$ has finitely generated invariants when $L$ is replaced by a tensor power $L^{\otimes c}$ for a sufficiently divisible positive integer $c$ and the linearisation is twisted by a well adapted rational character. Furthermore its enveloping quotient $X\env \GG_{k,p}$ is the associated projective variety and is a categorical quotient of $X^{ss}$ by the action of $\GG_{k,p}$, while the canonical morphism $\phi:X^{ss} \to X\env \GG_{k,p}$ is surjective with $\phi(x)=\phi(y)$ if and only if the closures of the $\GG_{k,p}$-orbits of $x$ and $y$ meet in $X^{ss}$.
\end{corollary}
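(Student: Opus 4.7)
The plan is to verify that $\GG_{k,p}$ satisfies the structural hypotheses of Corollary \ref{cor1.2} and then simply invoke that corollary; no new analysis is required beyond identifying the central $\CC^*$ and checking positivity of weights.

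First I would record the Levi decomposition already produced in the paragraph preceding the statement: the exact sequence $1 \to \UU_{k,p} \to \GG_{k,p} \to GL(p;\CC) \to 1$ splits, so $\GG_{k,p} \cong \UU_{k,p} \rtimes GL(p;\CC)$ with $\UU_{k,p}$ the unipotent radical and $R = GL(p;\CC)$ a reductive Levi factor. This matches the form $H \cong R \ltimes U$ required by Corollary \ref{cor1.2}.

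Next I would exhibit the central one-parameter subgroup whose conjugation action on $\Lie(\UU_{k,p})$ has all strictly positive weights. Reading off the matrix form \eqref{(4)}, the scalar subgroup of $GL(p;\CC)$ lifts to the diagonal torus of $\GG_{k,p}$ given by $\Phi_1 = t \cdot \mathrm{id}_{\CC^p}$ and $\Phi_i = 0$ for $i \geq 2$; the induced diagonal action on the $i$th block $\mathrm{Sym}^i \CC^p$ is by $t^i$. The Lie algebra of $\UU_{k,p}$ is a direct sum of blocks of the shape $\Hom(\Sym^i\CC^p, \CC^p)$ for $2 \leq i \leq k$, on which this $\CC^*$ acts by conjugation with weight $i-1$. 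Since $i \geq 2$, every weight satisfies $i-1 \geq 1 > 0$, as required. This $\hU = \UU_{k,p} \rtimes \CC^*$ is precisely the graded-unipotent subgroup of $\GG_{k,p}$ referred to in the hypothesis of the corollary.

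With the structural input in place, I would finish by appealing directly to Corollary \ref{cor1.2}: given that semistability coincides with stability for the action of $\hU$ (which is exactly the assumption in the statement), after replacing $L$ by a sufficiently divisible tensor power $L^{\otimes c}$ and twisting by a well-adapted rational character $\chi/c$, Corollary \ref{cor1.2} yields finite generation of $\bigoplus_{m \geq 0} H^0(X, L_{m\chi}^{\otimes cm})^{\GG_{k,p}}$, identifies $X \env \GG_{k,p}$ with the associated $\Proj$, and gives the surjective categorical quotient morphism $\phi: X^{ss,\GG_{k,p}} \to X \env \GG_{k,p}$ with $\phi(x) = \phi(y)$ iff the $\GG_{k,p}$-orbit closures of $x$ and $y$ meet in $X^{ss,\GG_{k,p}}$. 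There is no genuine obstacle — the only subtlety worth flagging in the write-up is pointing the reader to formula \eqref{(4)} for the explicit weight computation, since that is the one non-formal ingredient of the reduction.
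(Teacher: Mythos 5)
Your proposal is correct and follows essentially the same route as the paper: the paper likewise records the exact sequence $0 \to \UU_{k,p} \to \GG_{k,p} \to GL(p;\CC) \to 0$, identifies the central $\CC^*$ of $GL(p;\CC)$ with the diagonal matrices acting by $t^i$ on the block $\Sym^i\CC^p$, observes that conjugation then acts on $\Lie(\UU_{k,p})$ with weights $i-1>0$ for $2 \le i \le k$, and invokes Corollary \ref{cor1.2}. No further comparison is needed.
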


\begin{definition}
The generalized Demailly-Semple jet bundle $E_{k,p,m} \to X$ of
invariant jet differentials of
order $k$ and weighted degree $(m,\ldots, m)$
has fibre at $x \in X$ consisting of
complex-valued polynomials
$Q(f'(0),f''(0)/2,\ldots ,f^{(k)}(0)/k!)$ on the fibre $J_{k,p,x}$ of $J_{k,p}$, 
which transform under any reparametrization $\phi\in \GG_{k,p}$ of
$(\CC^p,0)$ as
\[Q(f \circ \phi)=(J_{\phi}(0))^mQ(f)\circ \phi,\]
where $J_\phi(0)$ denotes the Jacobian at 0 of $\phi$;
that is, $J_\phi(0)=\det \Phi_1$ when $\phi$ is given as at
(\ref{(4)}). Thus the
generalized Demailly-Semple bundle $E_{k,p}=\oplus
E_{k,p,m}$ of invariant jet differentials of order $k$ has fibre at $x \in X$
given by the generalized Demailly-Semple algebra
$\calo(J_{k,p,x})^{\UU_{k,p} \rtimes SL(p;\CC)}$. 
\end{definition}

We can apply Corollary \ref{po} to the linear action of $\GG_{k,p}$ on the projective space 
$X=\PP(J_{k,p,x})$ with respect to the line bundle $L=\calo_{\PP(J_{k,p,x})}(1)$ satisfying
$$ \calo(J_{k,p,x}) = \oplus_{j \geq 0} H^0( X,L^{\otimes j}).$$
As at Remark 
\ref{cor:invariants2}, by considering a diagonal action on $X \times \PP^1$,
we can deduce that the algebra  $\oplus_{m=0}^\infty H^0(X\times \PP^1,L_{m\chi}^{\otimes cm} \otimes \mathcal{O}_{\PP^1}(M))^{\GG_{k,p}}$ of $\GG_{k,p}$-invariants on $X \times \PP^1$  is 
finitely generated  when $M>>1$ and $c$ is a sufficiently divisible positive integer and the linear action has been twisted by a suitable  rational character $\chi/c$. This finitely generated graded algebra can be identified with the subalgebra of the 
generalized Demailly-Semple algebra
$\calo(J_{k,p,x})^{\UU_{k,p} \rtimes SL(p;\CC)}$
 generated by the $\UU_{k,p} \rtimes SL(p;\CC)$-invariants in $\oplus_{m=0}^\infty H^0(X,L^{\otimes cm})^{{\UU_{k,p} \rtimes SL(p;\CC)}}$ which are weight vectors with non-negative weights for the action of the central one-parameter subgroup of $GL_p$ after twisting by a suitable character $\chi$. This twisting is such that the matrix (\ref{(4)}) is replaced with its multiple by $(\det(\Phi_1)^{-(1/p) - \epsilon}$ for $0<\epsilon << 1$, so the only weight vectors $\sigma \in H^0(X,L) = \bigoplus_{i=1}^k \sym^i(\CC^p)$ with non-negative weights are the sections $\sigma$ in $\sym^1(\CC^p) = \CC^p$, which have weight $p\epsilon$. It therefore follows that the localisation 
$\calo(J_{k,p,x})^{\UU_{k,p} \rtimes SL(p;\CC)}_\sigma$ of the generalized Demailly-Semple algebra
$\calo(J_{k,p,x})^{\UU_{k,p} \rtimes SL(p;\CC)}$ at any such $\sigma$ is finitely generated (cf. \cite{dmr,merker}).


\begin{thebibliography}{99}

\bibitem{ahl} L. Ahlfors,  The theory of meromorphic curves, {\it Acta Soc. Sci. Finn. N.S.}, 3 (1941) 1-31.

\bibitem{BThom} G. B\'erczi, Thom polynomials and the Green--Griffiths conjecture, arXiv:1011.4710.

\bibitem{BDHK} G. B\'erczi, B. Doran, T. Hawes, F. Kirwan, Constructing quotients of algebraic varieties by linear algebraic group actions, arXiv:1512.02997.

\bibitem{BDHK16}
G. B\'erczi, B. Doran, T. Hawes, F. Kirwan, 
 Projective completions of graded unipotent quotients,
 arXiv:1607.04181.


\bibitem{BK15} G. B\'erczi, F. Kirwan, Graded unipotent groups and Grosshans theory, arXiv:1511.06983.

\bibitem{blo} A. Bloch, Sur les systemes de fonctions uniformes satisfaisnt a l'\'{e}quation d'une vari\'{e}t\'{e} alg\'{e}brique dont l'irr\'{e}gularit\'{e} d\'{e}passe le dimension, {\it J. de Math.} 5 (1926), 19-66.

\bibitem{bri09} M. Brion,  Anti-affine algebraic groups, {\em J. Algebra}, 321(3):934--952, 2009.


\bibitem{bri11} M. Brion,  On the geometry of algebraic groups and homogeneous spaces,  {\em J. Algebra}, 329, 52--71, 2011.


\bibitem{bri15-2} M.  Brion, On linearization of line bundles,  {\em J. Math. Sci. Univ. Tokyo}, 22(1):113--147, 2015.

\bibitem{brisu} M. Brion, P. Samuel,  V. Uma, Lectures on the structure of algebraic
groups and geometric applications, volume 1 of CMI Lecture Series in Mathemat-
ics. Hindustan Book Agency, New Delhi; Chennai Mathematical Institute (CMI),
Chennai, 2013.

\bibitem{cox} D.A. Cox, The homogeneous coordinate ring of a toric
variety, {\it J. Algebraic Geometry} {\bf 4} (1995) 17-50.

\bibitem{dem} J.-P. Demailly, Algebraic criteria for Kobayashi hyperbolic projective varieties and jet differentials, {\it Proc. Sympos. Pure Math.}
62 (1982), Amer. Math. Soc., Providence, RI, 1997, 285-360.

%\bibitem{dem00} J.-P. Demailly, Multiplier Ideal Sheaves and
%Analytic Methods in Algebraic Geometry. School on vanishing theorems
%and effective results in algebraic geometry (Trieste, 2000.), 1-148,
%ICTP Lecture Notes, 5., Trieste, 2001.

\bibitem{dmr} S. Diverio, J. Merker, E. Rousseau, Effective algebraic degeneracy, {\it Invent. Math.} 180(2010) 161-223.

%\bibitem{div2} S. Diverio, Existence of global invariant jet differentials
%on projective hypersurfaces of high degree, arxiv...

%\bibitem{div3} S. Diverio, Differential equations on complex projective
%hypersurfaces on low dimension, {\it Comp. Math.} {\bf 144} (2008),
%no 4, 920-932.

\bibitem{dr}  S. Diverio, E. Rousseau, A survey on hyperbolicity of projective hypersurfaces, IMPA Lecture Notes, Rio de Janeiro, 2012.

%\bibitem{gaffney} T. Gaffney, The Thom polynomial of
%$P^{1111}$, {\it Singularities, Part 1, Proc. Sympos. Pure Math.,
%40, (1983), 399-408.}


\bibitem{Dolg} I. Dolgachev, Lectures on invariant theory, London
Mathematical Society Lecture Note Series 296, Cambridge University
Press, 2003.



\bibitem{DK} B. Doran, F. Kirwan, Towards non-reductive geometric
invariant theory, 
Pure Appl. Math. Q. 3 (2007), 61--105.



\bibitem{dk15} 
E. Dufresne and H. Kraft, Invariants and separating morphisms for algebraic group actions,  {\em Math. Z.}, 280(1-2), 231--255, 2015.





\bibitem{F2} A. Fauntleroy, Categorical quotients of certain algebraic
group actions, Illinois Journal Math. 27 (1983), 115-124.

\bibitem{F1} A. Fauntleroy, Geometric invariant theory for general algebraic
groups, Compositio Mathematica 55 (1985), 63-87.



\bibitem{gg} M. Green, P. Griffiths, Two applications of algebraic
geometry to entire holomorphic mappings, {\it The Chern Symposium
1979. (Proc. Intern. Sympos., Berkeley, California, 1979)} 41-74,
Springer, New York, 1980.



\bibitem{GP1} G.-M. Greuel, G. Pfister, Geometric quotients of unipotent group actions,
Proc. London Math. Soc. (3) 67 (1993) 75-105.

\bibitem{GP2} G.-M. Greuel, G. Pfister, Geometric quotients of unipotent group actions II,
Singularities (Oberwolfach 1996), 27-36, Progress in Math. 162, Birkhauser, Basel 1998.



\bibitem{Grosshans} F. Grosshans, Algebraic homogeneous spaces and
invariant theory,
Lecture Notes in Mathematics, 1673, Springer-Verlag, Berlin, 1997.


\bibitem{Grosshans2} F. Grosshans, The invariants of unipotent radicals
of parabolic subgroups, Invent. Math. 73 (1983), 1--9.



\bibitem{MK} S. Keel, S. Mori, Quotients by groupoids, { Annals
of Math. (2)} {\bf 145} (1997), 193-213.

%\bibitem{Kiem} Y.-H. Kiem, Intersection cohomology of quotients of
%nonsingular varieties.  Invent. Math.  155  (2004),  no. 1, 163-202.

\bibitem{K} F. Kirwan, Cohomology of quotients in symplectic and
algebraic geometry.
Mathematical Notes, 31. Princeton University Press, Princeton, NJ, 1984.

\bibitem{K2} F. Kirwan, Partial desingularisations of quotients of
nonsingular varieties
and their Betti numbers. Ann. of Math. (2) 122 (1985), no. 1, 41-85.

%\bibitem{K3} F. Kirwan, Rational intersection cohomology of quotient
%varieties. Invent.
%Math. 86 (1986), no. 3, 471-505.

%\bibitem{K4} F. Kirwan, Rational intersection cohomology of quotient
%varieties. II.
%Invent. Math. 90 (1987), no. 1, 153-167.

\bibitem{KPEN} F. Kirwan, Quotients by non-reductive algebraic group actions,
arXiv:0801.4607, in \lq Moduli Spaces and Vector 
Bundles', Cambridge University Press.

\bibitem{implone} F. Kirwan, Symplectic implosion and non-reductive quotients, in 
\lq Geometric Aspects of Analysis and Mechanics', {Progress in Mathematics} Vol 292, 2011, 213-256.


\bibitem{kob} S. Kobayashi, Hyperbolic complex spaces, {\it Grundlehren der Mathematischen Wissenschaften} 318,
Springer Verlag, Berlin, 1998, xiv+471 pp.








%\bibitem{lang} S. Lang, Hyperbolic and Diophantine analysis, {\it Bull. Amer. %Math. Soc. 14 (1986)
%159�205.}

%\bibitem{mcquillan} M. McQuillan, Diophantine approximation and foliations, %{\it Inst. Hautes �tudes Sci. Publ. Math. 87}
%(1998), 121�174

%\bibitem{mcquillan2} M. McQuillan, Holomorphic curves on hyperplane sections of %-folds, {\it Geom. Funct. Anal. 9 (1999)}, 370�
%92.

%\bibitem{merker2} J. Merker, Low pole order frames on vertical jets
%of the universal hypersurface, {\it Ann. Inst. Fourier (Grenoble)},
%to appear


\bibitem{Kollar} J. Koll\'{a}r, Quotient spaces modulo algebraic groups,
{ \it Ann. Math. (2)} {\bf 145} (1997), 33-79.

%\bibitem{KolMor} J. Koll\'{a}r and S. Mori, Birational geometry of 
%algebraic varieties, Cambridge Tracts in Math. {\bf 134}, Cambridge
%University Press, 1998.

%\bibitem{Lehto} O. Lehto, Univalent functions and Teichm\"{u}ller
%spaces, Graduate Texts in Math. {\bf 109}, Springer, 1987.

%\bibitem{Lerman} E. Lerman, Symplectic cuts, Math. Res. Lett. {\bf 2}
%(1995), 247-258.

%\bibitem{meinsj} E. Meinrenken, R. Sjamaar, Singular reduction and
%quantization, 
%Topology 38 (1999),  699--762.

%\bibitem{Mori} S. Mori, Classification of higher dimensional
%varieties, Algebraic Geometry, Bowdoin 1985, Proc. Symp. Pure Math.
%{\bf 46} (1987), 269-331.

%\bibitem{Mori2} S. Mori, Birational classification of algebraic
%threefolds, Proc. ICM, Kyoto 1990, Springer (Tokyo), 1991, 235-248.
%\bibitem{Mum} D. Mumford, Stability of projective varieties, {\it
%L'Ens. Math} {\bf 23} (1977), 33-110.


\bibitem{merker} J. Merker, %An algorithm to generate all polynomials in the $k$-jet of a holomorphic disc $D\to \CC^n$ that are invariant under source reparametrization, arxiv.org/abs/0808.3547/
Application of computational invariant theory to Kobayashi hyperbolicity and to Green--Griffiths algebraic degeneracy, {\it J. Symbolic Computation} 45, 2010, 986-1074.

\bibitem{Mukai1} S. Mukai, An introduction to invariants and moduli,
Cambridge University Press 2003.

%\bibitem{Mukai} S. Mukai, Geometric realization of $T$-shaped root systems
%and counterexamples
%to Hilbert's fourteenth problem. Algebraic transformation groups and
%algebraic varieties, 123-129, Encyclopaedia Math. Sci., 132,
%Springer, Berlin, 2004.


\bibitem{GIT} D. Mumford, J. Fogarty, F. Kirwan, Geometric invariant
theory, 3rd edition, Springer, 1994.


%\bibitem{Nagata} M. Nagata, On the $14$-th problem of Hilbert.
%Amer. J. Math. 81, 1959, 766-772.

\bibitem{Ness} L. Ness, A stratification of the null cone via the moment map, {\it Amer. J. Math.} 106, 1984, 1281-1329.

\bibitem{New} P.E. Newstead, Introduction to moduli problems and orbit
spaces, Tata Institute Lecture Notes, Springer, 1978.

%\bibitem{Paradan} P.-E. Paradan, Formal geometric quantization,
%arXiv.math/0702224

%\bibitem{Popp} H. Popp, Moduli theory and classification theory of 
%algebraic varieties, Springer Lecture Notes in Math. {\bf 620}, 1977.


\bibitem{Popov} V. Popov, On Hilbert's theorem on invariants.
Dokl. Akad. Nauk SSSR 249 (1979), no. 3, 551-555.  English
translation: Soviet Math. Dokl. 20 (1979), no. 6, 1318-1322 (1980).


\bibitem{PopVin} V. Popov, E. Vinberg, Invariant theory,
Algebraic geometry IV, Encyclopaedia of Mathematical Sciences v. 55,
1994.

\bibitem{Ress} N. Ressayre, { The GIT-equivalence for $G$-line 
bundles}, Geom. Dedicata {\bf 81}
(2000), 295--324.

\bibitem{rousseau1} E. Rousseau, Etude des jets de Demailly-Semple en dimension 3, {\it Ann. Inst. Fourier (Grenoble) 56
(2006), no. 2}, 397-421.

\bibitem{siu1} Y.-T. Siu, Some recent transcendental techniques in algebraic and complex geometry. {\it Proceedings of
the International Congress of Mathematicians, Vol. I (Beijing, 2002)}, 439-448, Higher Ed. Press, Beijing, 2002.

\bibitem{siu2} Y.-T. Siu, Hyperbolicity in complex geometry, {\it The legacy of Niels Henrik Abel, Springer, Berlin,
2004}, 543-566.

\bibitem{siu3} Y.-T. Siu, S.-K. Yeung, Hyperbolicity of the complement of a generic
smooth curve of high degree in the complex projective plane, Invent.
Math. 124, (1996), 573-618.

\bibitem{stacks-project} The Stacks Project Authors, {\itshape Stacks project},
  http://stacks.math.columbia.edu. 2015.


%\bibitem{Reichstein} Z. Reichstein,
%Stability and equivariant maps,  Invent. Math.  96 
%(1989), 349--383.



%\bibitem{Rees} M. Rees, Teichm\"{u}ller distance, quadratic differentials
%and meromorphic 1-forms, these proceedings.

%\bibitem{riem} B. Riemann, Theorie der Abel'schen Funktionen, J. Reine angew. %Math.
%{\bf 54} (1857), 115-155.

%\bibitem{MiroRoig} R.-M. Miro-Roig, Moduli spaces of vector bundles, these
%proceedings.

%\bibitem{Sesh} C. Seshadri, Theory of moduli, {\it Proc. Symp. Pure Math.}
%{\bf 29}, Algebraic Geometry, Amer. Math. Soc. 1975.

%\bibitem{sund} D. Sundaramanan, Moduli, deformations and classifications
%of compact complex manifolds, Pitman Research Notes in Math. {\bf 45}, 1980.

\bibitem{Thaddeus} M. Thaddeus, Geometric invariant theory and flips, Journal of
Amer. Math. Soc. 9(3):691--723, 1996.

%\bibitem{Tsou} S.-T. Tsou, Some uses of moduli spaces in particle and field
%theory, these proceedings.

%\bibitem{V&LP} J.-L. Verdier and J. Le Potier, Module des fibr\'{e}s stables %sur les
%courbes alg\'{e}briques. Progress in Math. {\bf 54}, Birkh\"{a}user, 1985.

%\bibitem{viehweg}  E. Viehweg, Quasi-projective moduli for polarized 
%manifolds, Springer 1995.

%\bibitem{Voisin} C. Voisin, Hodge theory and deformations of complex structures,
%these proceedings.

\bibitem{W} J. Winkelmann, Invariant rings and quasiaffine quotients, Math. Z. 244 (2003), 163-174.


\end{thebibliography}
\end{document}